\numberwithin{equation}{section} 
\numberwithin{table}{section} 
\numberwithin{figure}{section} 
\setlist[enumerate, 1]{label={(\roman*)}} 
\theoremstyle{plain} 
\newtheorem{theorem}{Theorem}[section] 
\newtheorem{corollary}[theorem]{Corollary} 
\newtheorem{lemma}[theorem]{Lemma}
\theoremstyle{definition} 
\newtheorem*{definition*}{Definition}
\newtheorem*{theorem*}{Theorem} 
\newtheorem*{corollary*}{Corollary}
\newtheorem*{lemma*}{Lemma}
\newtheorem*{proposition*}{Proposition}
\newtheorem*{note*}{Note}
\newtheorem{example}[theorem]{Example}
\newtheorem*{example*}{Example}
\newtheorem*{exercise*}{Exercise}
\newtheorem{remark}[theorem]{Remark}
\newtheorem*{remark*}{Remark}
\newcommand{\prob}[2][]{\mathbb{P}_{#1}\left(#2\right)} 
\newcommand{\ev}[2][]{\mathbb{E}_{#1}\left[#2\right]} 
\newcommand{\reals}{\mathbb{R}} 
\newcommand{\innprod}[2]{\left<#1 , #2\right>} 
\newcommand{\norm}[1]{\lVert #1 \rVert} 
\newcommand{\normbig}[1]{\left\lVert #1 \right\rVert} 
\newcommand{\rank}{\operatorname{rank}} 
\newcommand{\Null}{\operatorname{Null}} 
\newcommand{\Range}{\operatorname{Range}} 
\newcommand*{\tran}{{\mkern-1.5mu\mathsf{T}}} 
\DeclareMathOperator*{\argmin}{arg\,min}
\newcommand{\rvline}{\hspace*{-\arraycolsep}\vline\hspace*{-\arraycolsep}} 
\newcommand\Tstrut{\rule{0pt}{2.9ex}} 
\newcommand\Bstrut{\rule[-1.2ex]{0pt}{0pt}} 
\let\epsilon\varepsilon
\let\phi\varphi
\title{A subspace constrained randomized Kaczmarz method\\for structure or external knowledge exploitation}
\author{
    Jackie Lok\thanks{
    Department of Operations Research and Financial Engineering, Princeton University, \texttt{jackie.lok@princeton.edu}, \texttt{elre@princeton.edu}}
    \and
    Elizaveta Rebrova\footnotemark[1]
}
\date{}
\begin{document}

\maketitle

\begin{abstract}
We study a version of the randomized Kaczmarz algorithm for solving systems of linear equations where the iterates are confined to the solution space of a selected subsystem. We show that the subspace constraint leads to an accelerated convergence rate, especially when the system has approximately low-rank structure. On Gaussian-like random data, we show that it results in a form of dimension reduction that effectively increases the aspect ratio of the system. Furthermore, this method serves as a building block for a second, quantile-based algorithm for solving linear systems with arbitrary sparse corruptions, which is able to efficiently utilize external knowledge about corruption-free equations and achieve convergence in difficult settings. Numerical experiments on synthetic and realistic data support our theoretical results and demonstrate the validity of the proposed methods for even more general data models than guaranteed by the theory.
\end{abstract}

{\small\textit{Keywords}.
    Kaczmarz algorithm, stochastic iterative methods, quantile methods, corrupted linear systems
}

{\small\textit{2020 Mathematics Subject Classification}.
    65F10, 65F20, 60B20, 68W20
}

\section{Introduction} \label{sec:intro}

A ubiquitous problem across the sciences is solving large-scale systems of linear equations $\mathbf{A} \mathbf{x} = \mathbf{b}$, for which scalable and efficient iterative methods are useful when it is too slow or infeasible to solve the system directly. Instead of solving such problems obliviously, it is natural to have insights into the structural properties of the linear system of interest, such as being approximately low-rank. Moreover, external knowledge about trustworthy observations in the presence of corrupted measurements could be available.

Now, additional information about the structure of a linear system influences the choice of the most suitable solver. In this work, we take an adaptive, problem-aware approach to account for various types of auxiliary information by augmenting the iterations of a generic iterative linear solver based on a distinguished subsystem of equations.

The generic solver that we consider is the \emph{Kaczmarz algorithm}~\cite{Kaczmarz1937}, which is an iterative, row-action method for solving large-scale, typically overdetermined systems of linear equations.
It is a special case of the alternating projection method that has low computational cost and storage per iteration, and can be used in the streaming setting where a single row (or block of rows) of the system can be accessed at a time. Besides its traditional applications in areas such as image reconstruction~\cite{Natterer2001, Herman2009} and signal processing~\cite{CenkerEtAl1992}, the Kaczmarz algorithm has recently been used as a building block for more sophisticated methods to design linear solvers~\cite{derezinski2023solving, du2021kaczmarz}, and to address problems such as phase retrieval~\cite{TanVershynin2019} and tensor recovery~\cite{chen2021regularized}.

In each iteration of the Kaczmarz algorithm, a row $\mathbf{a}_j$ of the matrix $\mathbf{A}$ is selected, and the current iterate $\mathbf{x}^k$ is projected onto the hyperplane $\mathbf{a}_j^{\tran} \mathbf{x} = b_j$ by
\begin{equation} \label{eq:rk_update}
    \mathbf{x}^{k+1} = \mathbf{x}^k + \frac{b_j - \mathbf{a}_j^{\tran} \mathbf{x}^k}{\norm{\mathbf{a}_j}} \cdot \frac{\mathbf{a}_j}{\norm{\mathbf{a}_j}} .
\end{equation}
In their seminal paper, Strohmer and Vershynin~\cite{StrohmerVershynin2009} show that if the system of linear equations is consistent and has unique solution $\mathbf{x}^*$, then the \emph{randomized Kaczmarz (RK) algorithm}, which samples each row independently with probability $\norm{\mathbf{a}_j}^2 / \norm{\mathbf{A}}_F^2$ at each iteration, converges to $\mathbf{x}^*$ in expectation with an exponential rate (i.e.\ linearly) that depends on the geometric properties of $\mathbf{A}$ (or more precisely, its scaled condition number $\norm{\mathbf{A}}_F / \sigma_{\mathrm{min}}(\mathbf{A})$):
\begin{equation} \label{eq:rk_convergence_bound}
    \mathbb{E} \norm{\mathbf{x}^k - \mathbf{x}^*}^2 \leq \left( 1 - \frac{\sigma_{\mathrm{min}}(\mathbf{A})^2}{\norm{\mathbf{A}}_F^2} \right)^k \cdot \norm{\mathbf{x}^0 - \mathbf{x}^*} .
\end{equation}
Subsequently, many variants of the randomized Kaczmarz method have been analyzed; we defer a detailed discussion of related works to Section~\ref{sec:related_work} after presenting our results.

In this paper, we propose two Kaczmarz-based algorithms that can exploit (a) \emph{approximately low-rank structure and geometric properties of the matrix in a system of linear equations to accelerate convergence}; and (b) \emph{external knowledge about corruption-free equations for linear systems with arbitrary sparse corruptions to enable convergence even in the highly corrupted regime}.

\subsection{Setup and notation} \label{subsec:setup}

We consider a consistent, overdetermined (i.e.\ tall) system of linear equations $\mathbf{A} \mathbf{x} = \mathbf{b}$, or \emph{linear system} for short, where the rows of $\mathbf{A} \in \reals^{m \times n}$ are denoted by $\mathbf{a}_1, \mathbf{a}_2, \dots, \mathbf{a}_m \in \reals^n$, $\mathbf{b} \in \reals^{m}$, and $m \geq n$. We assume throughout that $\mathbf{A}$ has full rank, and denote the unique solution of the linear system by $\mathbf{x}^* \in \reals^n$. We work in the real setting for simplicity, but everything can be generalized to the complex setting.

Vectors, oriented as columns by default,  and matrices are written in boldface. The vector $\ell_2$-norm is denoted by $\norm{\cdot}$, and the matrix spectral and Frobenius norms are denoted by $\norm{\cdot}$ and $\norm{\cdot}_F$. The singular values of a matrix $\mathbf{A} \in \reals^{m \times n}$ are denoted by $\sigma_{\mathrm{max}}(\mathbf{A}) = \sigma_1(\mathbf{A}) \geq \sigma_2(\mathbf{A}) \geq \dots \geq \sigma_{\min\{m, n\}}(\mathbf{A}) = \sigma_{\mathrm{min}}(\mathbf{A})$, and the smallest non-zero singular value is denoted by $\sigma_{\mathrm{min}}^+(\mathbf{A})$. The Moore-Penrose pseudoinverse of $\mathbf{A}$ is denoted by $\mathbf{A}^{\dagger}$. We refer to the row submatrix of $\mathbf{A}$ (resp. subvector of $\mathbf{b}$) indexed by $I \subseteq [m] \coloneqq \{  1, 2, \dots, m \}$ by $\mathbf{A}_I$ (resp. $\mathbf{b}_I$). The solution space $\mathbf{A}_I \mathbf{x} = \mathbf{b}_I$ of a linear system refers to the affine subspace $\{ \mathbf{x} \in \reals^n : \mathbf{A}_I \mathbf{x} = \mathbf{b}_I \}$.

\subsection{Methods and main results} \label{subsec:main_results}

\subsubsection{The SCRK method}

Fix a subset $I_0 \subset [m]$ of indices of rows of $\mathbf{A}$ with $m_0 \coloneqq |I_0| < n$, and denote the remaining indices by $I_1 \coloneqq [m] \setminus I_0$. We define a variant of the RK algorithm that confines the iterates within the solution space $\mathbf{A}_{I_0} \mathbf{x} = \mathbf{b}_{I_0}$, which we will refer to as the \emph{subspace constrained randomized Kaczmarz (SCRK) method}. Each update of the SCRK algorithm consists of a projection of the current iterate $\mathbf{x}^k$ onto the solution space $\mathbf{A}_{I_0 \cup \{ j \}} \mathbf{x} = \mathbf{b}_{I_0 \cup \{ j \}}$, where the row corresponding to $j \in I_1$ is sampled according to an input probability distribution, and can be algebraically expressed by
\begin{equation} \label{eq:scrk_block_update}
    \mathbf{x}^{k+1} = \mathbf{x}^k + \mathbf{A}_{I_0 \cup \{j \}}^{\dagger} (\mathbf{b}_{I_0 \cup \{ j \}} - \mathbf{A}_{I_0 \cup \{ j \}} \mathbf{x}^k) .
\end{equation}
This is essentially a block Kaczmarz update~\cite{Elfving1980, NeedellTropp2014}, but with $I_0$ fixed throughout the iterations so that the iterates are confined within the selected solution space $\mathbf{A}_{I_0} \mathbf{x} = \mathbf{b}_{I_0}$. Reusing the same block allows for properties of the distinguished subsystem $\mathbf{A}_{I_0} \mathbf{x} = \mathbf{b}_{I_0}$ to be exploited, and also leads to a more efficient update formula: in Lemma~\ref{lem:scrk_update_formula}, we prove that as long as $\mathbf{x}^k$ satisfies $\mathbf{A}_{I_0} \mathbf{x}^k = \mathbf{b}_{I_0}$ and $\mathbf{a}_j \notin \Range(\mathbf{A}_{I_0}^{\tran})$, \eqref{eq:scrk_block_update} simplifies to
\begin{equation} \label{eq:scrk_update_main}
    \mathbf{x}^{k+1} = \mathbf{x}^k + \frac{b_j - \mathbf{a}_j^{\tran} \mathbf{x}^k}{\norm{\mathbf{P} \mathbf{a}_j}} \cdot \frac{\mathbf{P} \mathbf{a}_j}{{\norm{\mathbf{P} \mathbf{a}_j}}} ,
\end{equation}
where $\mathbf{P} \coloneqq \mathbf{I} - \mathbf{A}_{I_0}^{\dagger} \mathbf{A}_{I_0}$ is the orthogonal projector onto $\Null(\mathbf{A}_{I_0}) = \Range(\mathbf{A}_{I_0}^{\tran})^{\perp}$.
Unlike the block update~\eqref{eq:scrk_block_update}, this does not require a new pseudoinverse to be computed at every iteration and thus can be performed faster. The SCRK method, which leverages~\eqref{eq:scrk_update_main}, is summarized in Algorithm~\ref{alg:scrk_method}. For concreteness, we fix a particular sampling distribution for the rows of $\mathbf{A}_{I_1}$ that leads to an especially simple and interpretable analysis. By varying the distribution, better convergence rates may be possible~\cite{GowerRichtarik2015, AgWaLu2014}.

On a conceptual level, the SCRK update~\eqref{eq:scrk_update_main} is reminiscent of the usual Kaczmarz update~\eqref{eq:rk_update}, with the new direction $\mathbf{P} \mathbf{a}_j$ representing the ``extra information'' offered by $\mathbf{a}_j$ beyond that which is already known from being in the solution space $\mathbf{A}_{I_0} \mathbf{x} = \mathbf{b}_{I_0}$.

\begin{algorithm}[!htb]
\caption{Subspace Constrained Randomized Kaczmarz (SCRK)} \label{alg:scrk_method}
\begin{algorithmic}[1]
    \Procedure{SCRK}{$\mathbf{A}, \mathbf{b}, I_0, K$}
        \State $\mathbf{P} = \mathbf{I} - \mathbf{A}_{I_0}^{\dagger} \mathbf{A}_{I_0}$ \Comment{Orthogonal projector onto $\Null(\mathbf{A}_{I_0})$}
        \State \textbf{initialize} $\mathbf{x}^0 = \mathbf{A}_{I_0}^{\dagger} \mathbf{b}_{I_0}$ \Comment{Initial iterate $\mathbf{x}^0$ solves $\mathbf{A}_{I_0} \mathbf{x}^0 = \mathbf{b}_{I_0}$}
        \For{$k = 1, \dots, K$}
            \State sample $j \in [m] \setminus I_0$ with prob. $\norm{\mathbf{P} \mathbf{a}_j}^2 / \norm{\mathbf{A}_{I_1} \mathbf{P}}_F^2$  \Comment{Sample row in $\mathbf{A}_{I_1}$}
            \State $\mathbf{x}^{k} = \mathbf{x}^{k-1} + \frac{b_j - \mathbf{a}_j^{\tran} \mathbf{x}^{k-1}}{\norm{\mathbf{P} \mathbf{a}_j}} \cdot \frac{\mathbf{P} \mathbf{a}_j}{\norm{\mathbf{P} \mathbf{a}_j}}$ \Comment{Project onto $\mathbf{A}_{{I_0} \cup \{ j \}} \mathbf{x} = \mathbf{b}_{I_0 \cup \{ j \}}$}
        \EndFor
        \State \textbf{return} $\mathbf{x}^K$
    \EndProcedure
\end{algorithmic}
\end{algorithm}

The following result, proved in Section~\ref{subsec:update_formula}, shows that the SCRK method converges linearly in expectation to the solution $\mathbf{x}^*$ of $\mathbf{A} \mathbf{x} = \mathbf{b}$ under minimal assumptions.

\begin{theorem} \label{thm:scrk_convergence}
Suppose that the rows of $\mathbf{A}$ are partitioned into two blocks $\mathbf{A}_{I_0}$ and $\mathbf{A}_{I_1}$ of sizes $m_0$ and $m - m_0$ respectively. Let $\mathbf{P} = \mathbf{I} - \mathbf{A}_{I_0}^{\dagger} \mathbf{A}_{I_0}$ be the orthogonal projector onto $\Null(\mathbf{A}_{I_0})$, and $\sigma_{\mathrm{min}}^+(\mathbf{A}_{I_1} \mathbf{P})$ be the smallest non-zero singular value of $\mathbf{A}_{I_1} \mathbf{P}$. Then the SCRK iterates $\mathbf{x}^k$ from Algorithm~\ref{alg:scrk_method} converge to the solution $\mathbf{x}^*$ in expectation with
\begin{equation} \label{eq:scrk_convergence_1}
    \mathbb{E} \norm{\mathbf{x}^k - \mathbf{x}^*}^2 \leq \left( 1 - \frac{\sigma_{\mathrm{min}}^+(\mathbf{A}_{I_1} \mathbf{P})^2}{\norm{\mathbf{A}_{I_1} \mathbf{P}}_F^2} \right)^k \cdot \norm{\mathbf{x}^0 - \mathbf{x}^*}^2 .
\end{equation}
\end{theorem}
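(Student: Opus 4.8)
The plan is to mirror the Strohmer--Vershynin analysis of the standard randomized Kaczmarz method, with the orthogonal projector $\mathbf{P} = \mathbf{I} - \mathbf{A}_{I_0}^{\dagger}\mathbf{A}_{I_0}$ onto $\Null(\mathbf{A}_{I_0})$ taking over the role played by the identity in the unconstrained case. Before the probabilistic estimate I would pin down the two structural invariants the whole argument rests on. Since the initialization $\mathbf{x}^0 = \mathbf{A}_{I_0}^{\dagger}\mathbf{b}_{I_0}$ satisfies $\mathbf{A}_{I_0}\mathbf{x}^0 = \mathbf{b}_{I_0}$, and every update~\eqref{eq:scrk_update_main} adds a multiple of $\mathbf{P}\mathbf{a}_j \in \Null(\mathbf{A}_{I_0})$, each iterate remains in the solution space $\mathbf{A}_{I_0}\mathbf{x} = \mathbf{b}_{I_0}$ (this is also the hypothesis that validates~\eqref{eq:scrk_update_main} via Lemma~\ref{lem:scrk_update_formula}). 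As $\mathbf{x}^*$ solves this subsystem too, the error $\mathbf{e}^k \coloneqq \mathbf{x}^k - \mathbf{x}^*$ satisfies $\mathbf{e}^k \in \Null(\mathbf{A}_{I_0}) = \Range(\mathbf{P})$, hence $\mathbf{P}\mathbf{e}^k = \mathbf{e}^k$, for every $k$.

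Next I would rewrite the update in terms of the error. Using $b_j = \mathbf{a}_j^{\tran}\mathbf{x}^*$ together with $\mathbf{P}\mathbf{e}^k = \mathbf{e}^k$ and the symmetry and idempotence of $\mathbf{P}$, the residual becomes $b_j - \mathbf{a}_j^{\tran}\mathbf{x}^k = -(\mathbf{P}\mathbf{a}_j)^{\tran}\mathbf{e}^k$, so that~\eqref{eq:scrk_update_main} reads
\begin{equation*}
    \mathbf{e}^{k+1} = \left( \mathbf{I} - \frac{\mathbf{P}\mathbf{a}_j (\mathbf{P}\mathbf{a}_j)^{\tran}}{\norm{\mathbf{P}\mathbf{a}_j}^2} \right) \mathbf{e}^k .
\end{equation*}
The bracketed matrix is the orthogonal projection off the direction $\mathbf{P}\mathbf{a}_j$, so the Pythagorean theorem yields the exact identity $\norm{\mathbf{e}^{k+1}}^2 = \norm{\mathbf{e}^k}^2 - ((\mathbf{P}\mathbf{a}_j)^{\tran}\mathbf{e}^k)^2 / \norm{\mathbf{P}\mathbf{a}_j}^2$. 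Taking the conditional expectation over the sampled index $j \in I_1$, which carries probability $\norm{\mathbf{P}\mathbf{a}_j}^2 / \norm{\mathbf{A}_{I_1}\mathbf{P}}_F^2$, cancels the $\norm{\mathbf{P}\mathbf{a}_j}^2$ factors and collapses $\sum_{j \in I_1} ((\mathbf{P}\mathbf{a}_j)^{\tran}\mathbf{e}^k)^2$ into $\norm{\mathbf{A}_{I_1}\mathbf{P}\mathbf{e}^k}^2$, because the rows of $\mathbf{A}_{I_1}\mathbf{P}$ are exactly the vectors $(\mathbf{P}\mathbf{a}_j)^{\tran}$ for $j \in I_1$. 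This gives
\begin{equation*}
    \cev{\norm{\mathbf{e}^{k+1}}^2}{\mathbf{x}^k} = \norm{\mathbf{e}^k}^2 - \frac{\norm{\mathbf{A}_{I_1}\mathbf{P}\mathbf{e}^k}^2}{\norm{\mathbf{A}_{I_1}\mathbf{P}}_F^2} .
\end{equation*}

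The crux is then to lower bound $\norm{\mathbf{A}_{I_1}\mathbf{P}\mathbf{e}^k}^2$ by $\sigma_{\mathrm{min}}^+(\mathbf{A}_{I_1}\mathbf{P})^2\, \norm{\mathbf{e}^k}^2$. In general the smallest nonzero singular value of a matrix $\mathbf{M}$ only controls $\norm{\mathbf{M}\mathbf{v}}$ for $\mathbf{v} \perp \Null(\mathbf{M})$, so I must verify that $\mathbf{e}^k$ lands in $\Null(\mathbf{A}_{I_1}\mathbf{P})^{\perp}$. The clean route is to identify the kernel exactly: if $\mathbf{v} \in \Range(\mathbf{P})$ and $\mathbf{A}_{I_1}\mathbf{P}\mathbf{v} = 0$, then $\mathbf{P}\mathbf{v} = \mathbf{v}$ forces $\mathbf{v} \in \Null(\mathbf{A}_{I_0}) \cap \Null(\mathbf{A}_{I_1}) = \Null(\mathbf{A}) = \{ 0 \}$, using that $\mathbf{A}$ has full column rank. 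Together with the trivial inclusion $\Range(\mathbf{P})^{\perp} \subseteq \Null(\mathbf{A}_{I_1}\mathbf{P})$, this shows $\Null(\mathbf{A}_{I_1}\mathbf{P}) = \Range(\mathbf{P})^{\perp}$, whose orthogonal complement is precisely $\Range(\mathbf{P})$; the singular value bound therefore applies to every $\mathbf{v} \in \Range(\mathbf{P})$, and in particular to $\mathbf{e}^k$. This is the step I expect to demand the most care, since it is where the full-rank hypothesis enters and where one could erroneously apply the bound to a vector carrying a component in the kernel. Substituting the bound, taking total expectations, and iterating the resulting contraction factor $1 - \sigma_{\mathrm{min}}^+(\mathbf{A}_{I_1}\mathbf{P})^2 / \norm{\mathbf{A}_{I_1}\mathbf{P}}_F^2$ from step $k$ down to $0$ produces~\eqref{eq:scrk_convergence_1}.
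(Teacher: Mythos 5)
Your proposal is correct and follows essentially the same route as the paper's proof: rewriting the update as an orthogonal projection of the error off the direction $\mathbf{P}\mathbf{a}_j$, applying Pythagoras, collapsing the conditional expectation to $\norm{\mathbf{A}_{I_1}\mathbf{P}(\mathbf{x}^k - \mathbf{x}^*)}^2 / \norm{\mathbf{A}_{I_1}\mathbf{P}}_F^2$, and then identifying $\Null(\mathbf{A}_{I_1}\mathbf{P}) = \Null(\mathbf{P})$ via the full-rank hypothesis so that the restricted singular value bound applies to the error, which lies in $\Range(\mathbf{P})$. Your explicit verification of the invariant $\mathbf{x}^k - \mathbf{x}^* \in \Null(\mathbf{A}_{I_0})$ is a point the paper leaves implicit in the algorithm's initialization and update direction, but the argument is otherwise identical in substance.
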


In the special case that the row spaces of $\mathbf{A}_{I_0}$ and $\mathbf{A}_{I_1}$ are orthogonal (i.e.\ $\mathbf{A}_{I_0} \mathbf{A}_{I_1}^{\tran} = \mathbf{0}$), the SCRK updates~\eqref{eq:scrk_update_main} reduce to the usual Kaczmarz updates~\eqref{eq:rk_update} since $\mathbf{P} \mathbf{a}_j = \mathbf{a}_j$ for all $j \in I_1$, and hence we immediately deduce the following:

\begin{corollary} \label{cor:scrk_convergence_orthog}
Consider the same setup as Theorem~\ref{thm:scrk_convergence}. If $\mathbf{A}_{I_0} \mathbf{A}_{I_1}^{\tran} = \mathbf{0}$, then
\begin{equation} \label{eq:scrk_convergence_orthog_1}
    \mathbb{E} \norm{\mathbf{x}^k - \mathbf{x}^*}^2 \leq \left( 1 - \frac{\sigma_{\mathrm{min}}^+(\mathbf{A}_{I_1})^2}{\norm{\mathbf{A}_{I_1}}_F^2} \right)^k \cdot \norm{\mathbf{x}^0 - \mathbf{x}^*}^2 .
\end{equation}
\end{corollary}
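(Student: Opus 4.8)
The plan is to derive the corollary directly from Theorem~\ref{thm:scrk_convergence} by showing that, under the orthogonality hypothesis, the projected matrix $\mathbf{A}_{I_1} \mathbf{P}$ appearing in the convergence rate is simply $\mathbf{A}_{I_1}$ itself. The only thing requiring verification is the pointwise identity $\mathbf{P} \mathbf{a}_j = \mathbf{a}_j$ for every row index $j \in I_1$, which is asserted in the text immediately preceding the statement; once this is in hand, the result is a substitution.

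First I would unpack the hypothesis $\mathbf{A}_{I_0} \mathbf{A}_{I_1}^{\tran} = \mathbf{0}$ entrywise. The $(i,j)$ entry of this product is the inner product between the $i$-th row of $\mathbf{A}_{I_0}$ and the $j$-th row $\mathbf{a}_j$ of $\mathbf{A}_{I_1}$, so the condition states that every row of $\mathbf{A}_{I_1}$ is orthogonal to every row of $\mathbf{A}_{I_0}$. Equivalently, each $\mathbf{a}_j$ with $j \in I_1$ is orthogonal to $\Range(\mathbf{A}_{I_0}^{\tran})$, that is, $\mathbf{a}_j \in \Range(\mathbf{A}_{I_0}^{\tran})^{\perp} = \Null(\mathbf{A}_{I_0})$. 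Since $\mathbf{P} = \mathbf{I} - \mathbf{A}_{I_0}^{\dagger} \mathbf{A}_{I_0}$ is the orthogonal projector onto $\Null(\mathbf{A}_{I_0})$ and therefore acts as the identity on that subspace, we conclude $\mathbf{P} \mathbf{a}_j = \mathbf{a}_j$ for all $j \in I_1$.

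Next I would lift this to the matrix level. Using the symmetry of the projector $\mathbf{P} = \mathbf{P}^{\tran}$, the $j$-th row of $\mathbf{A}_{I_1} \mathbf{P}$ is $\mathbf{a}_j^{\tran} \mathbf{P} = (\mathbf{P} \mathbf{a}_j)^{\tran} = \mathbf{a}_j^{\tran}$, so $\mathbf{A}_{I_1} \mathbf{P} = \mathbf{A}_{I_1}$. Consequently $\sigma_{\mathrm{min}}^+(\mathbf{A}_{I_1} \mathbf{P}) = \sigma_{\mathrm{min}}^+(\mathbf{A}_{I_1})$ and $\norm{\mathbf{A}_{I_1} \mathbf{P}}_F = \norm{\mathbf{A}_{I_1}}_F$. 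Substituting these two equalities into the bound~\eqref{eq:scrk_convergence_1} of Theorem~\ref{thm:scrk_convergence} yields exactly~\eqref{eq:scrk_convergence_orthog_1}, completing the argument.

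There is no genuine obstacle here, as the corollary is a specialization rather than a new estimate; the one point deserving care is the identification $\Range(\mathbf{A}_{I_0}^{\tran})^{\perp} = \Null(\mathbf{A}_{I_0})$ together with the fact that an orthogonal projector restricts to the identity on its own range, which together justify $\mathbf{P}\mathbf{a}_j = \mathbf{a}_j$. As a sanity check consistent with the surrounding discussion, this also confirms that the SCRK update~\eqref{eq:scrk_update_main} collapses to the ordinary randomized Kaczmarz update~\eqref{eq:rk_update} restricted to the block $\mathbf{A}_{I_1}$, so the corollary recovers the Strohmer--Vershynin rate~\eqref{eq:rk_convergence_bound} for that subsystem.
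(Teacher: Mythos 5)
Your proposal is correct and matches the paper's own (inline) argument: the paper deduces the corollary from Theorem~\ref{thm:scrk_convergence} precisely by noting that $\mathbf{A}_{I_0}\mathbf{A}_{I_1}^{\tran}=\mathbf{0}$ forces $\mathbf{P}\mathbf{a}_j=\mathbf{a}_j$ for all $j\in I_1$, hence $\mathbf{A}_{I_1}\mathbf{P}=\mathbf{A}_{I_1}$. You simply supply the routine details (rows of $\mathbf{A}_{I_1}$ lie in $\Null(\mathbf{A}_{I_0})=\Range(\mathbf{A}_{I_0}^{\tran})^{\perp}$, and $\mathbf{P}$ acts as the identity there) that the paper leaves implicit.
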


\textbf{Noisy linear systems.}
In Section~\ref{sec:noisy_case}, we prove that for inconsistent systems of linear equations where a noisy measurement vector $\widehat{\mathbf{b}} \ne \mathbf{b}$ is observed, the SCRK method converges at the same rate up to an error horizon around the solution $\mathbf{x}^*$ with a radius that depends on the noise in $I_0$ and $I_1$, as well as the geometries of $\mathbf{A}_{I_0}$ and $\mathbf{A}_{I_1} \mathbf{P}$ (Theorem~\ref{thm:noisy_scrk_convergence}). This expands on a phenomenon that is known from previous analyses of Kaczmarz methods~\cite{Needell2010, NeedellTropp2014}.

The analysis of inconsistent linear systems requires developing technical results involving a two-step decomposition of the block update~\eqref{eq:scrk_block_update}, which provides a partial generalization of the two-subspace Kaczmarz method in~\cite{NeedellWard2013} (see Remark~\ref{rmk:scrk_block_update_general}).

\begin{remark}[Per-iteration complexity] \label{rmk:complexity}
Each SCRK iteration can be computed in $O(m_0 n)$ flops, with calculating $\mathbf{P} \mathbf{a}_j = \mathbf{a}_j - \mathbf{A}_{I_0}^{\dagger} \mathbf{A}_{I_0} \mathbf{a}_j$ being the most expensive step. This requires directly computing $\mathbf{A}_{I_0}^{\dagger}$ or an orthonormal basis for $\Range(\mathbf{A}_{I_0}^{\tran})$\footnote{The projector $\mathbf{A}_{I_0}^{\dagger} \mathbf{A}_{I_0}$ can be written as $\mathbf{Q} \mathbf{Q}^{\tran}$ where $\mathbf{Q} \in \reals^{n \times m_0}$ is a matrix whose columns form an orthonormal basis of $\Range(\mathbf{A}_{I_0}^{\tran})$, which can be computed in $O(m_0^2 n)$ flops.} only once, using a method based on QR decomposition or SVD. This per-iteration cost is comparable to the $O(n)$ flops per iteration of RK if $m_0$ is not too large. The overall complexity is then determined by multiplying the per-iteration cost by the number of iterations required to reach a desired error, which we will elaborate upon below.

\end{remark}

\textbf{Exploiting low-rank structure with the SCRK method.}
The convergence rate of the SCRK algorithm depends on the geometric properties of $\mathbf{A}$ and $\mathbf{P}$: Theorem~\ref{thm:scrk_convergence} shows that $k_\epsilon \coloneqq \kappa(\mathbf{A}_{I_1} \mathbf{P})^2  \log\left( 1 / \epsilon \right)$ iterations suffice to achieve the relative error guarantee $\mathbb{E} \norm{\mathbf{x}^k - \mathbf{x}^*}^2 \leq \epsilon \norm{\mathbf{x}^0 - \mathbf{x}^*}^2$, where $\kappa(\mathbf{A}_{I_1} \mathbf{P}) \coloneqq \norm{\mathbf{A}_{I_1} \mathbf{P}}_F / \sigma_{\mathrm{min}}^+(\mathbf{A}_{I_1} \mathbf{P})$ is a scaled condition number of $\mathbf{A}_{I_1} \mathbf{P}$. For the same guarantee using RK, from~\eqref{eq:rk_convergence_bound}, $\kappa(\mathbf{A})^2 \log(1 / \epsilon)$ iterations are required, where $\kappa(\mathbf{A}) \coloneqq \norm{\mathbf{A}}_F / \sigma_{\mathrm{min}}(\mathbf{A})$.

Since $\kappa(\mathbf{A}_{I_1} \mathbf{P}) \leq \kappa(\mathbf{A})$, we see that the projector $\mathbf{P}$ acts as a right preconditioner for $\mathbf{A}$, improving the convergence rate of SCRK compared to RK. In particular, we can expect a significant per-iteration improvement (and hence overall advantage) if $\norm{\mathbf{A}_{I_1} \mathbf{P}}_F \ll \norm{\mathbf{A}}_F$ or $\sigma_{\mathrm{min}}^+(\mathbf{A}_{I_1} \mathbf{P}) \gg \sigma_{\mathrm{min}}(\mathbf{A})$. We examine the connection between the geometry of $\mathbf{A}$ and convergence rates in more detail in Section~\ref{subsec:scrk_geometry} to show that the SCRK method is able to exploit approximately low-rank structure and geometric properties of $\mathbf{A}$ to accelerate convergence.

We also describe how a good subset $I_0$ of rows, if not explicitly known, can actually be \emph{efficiently found} via a connection to low-rank matrix approximation in Section~\ref{subsec:sampling_subspace}.

\medskip

\textbf{SCRK on random data and dimension reduction.}
In a somewhat complementary setting, we show that for ``unstructured'' matrices, the subspace constraint imposed by the projector $\mathbf{P}$ \emph{acts as a form of dimension reduction that effectively increases the aspect ratio of the system to reflect the dimensionality of the solution that remains unsolved} in Section~\ref{sec:random_matrix_analysis}. More precisely, we prove that when $\mathbf{A} \in \reals^{m \times n}$ is drawn from a generic class of ``Gaussian-like'' random matrices, the SCRK method typically converges with a rate that is approximately $1 - 1 / (n - m_0)$ as long as the ``effective aspect ratio'' $(m - m_0) / (n - m_0)$ of the system is sufficiently large (Theorem~\ref{thm:subgauss_scrk_convergence}). Note that $1 - 1 /(n - m_0)$ is the best possible rate that can be achieved by the RK algorithm (with any sampling distribution) on a consistent $(m - m_0) \times (n - m_0)$ linear system (see~\cite{GowerRichtarik2015}).

\subsubsection{The QuantileSCRK method}

We also propose a modification of the SCRK method for solving corrupted systems of linear equations. This setting models applications where some measurements are corrupted by arbitrarily large errors, which may occur during the data collection, transmission, and storage process due to malfunctioning sensors or faulty components (for more examples, see~\cite{StKuPoBo2012, HaddockNeedell2019}). Unlike the noisy setting above, the error horizon is not very meaningful since significant outliers can be introduced. Hence, the aim is to converge to the solution $\mathbf{x}^*$ \emph{exactly} by identifying and avoiding corruptions, which may be possible if the number of corruptions is relatively small and the system is highly overdetermined.

Our model for \emph{corrupted linear systems} is defined as follows. Let $\mathcal{C} \subseteq [m]$ and $\mathbf{b}_{\mathcal{C}} \in \reals^n$ be a \emph{sparse} vector of \emph{arbitrary} (possibly adversarial) corruptions supported on $\mathcal{C}$. Moreover, suppose that we possess external knowledge in the form of a corruption-free subset $I_0 \subset [m]$ of size $m_0$ such that $(\mathbf{b}_{\mathcal{C}})_{I_0} = \mathbf{0}$; for example, this could reflect a set of trustworthy measurements by a reliable source, or infallible equations arising from physical laws. The goal is to reconstruct the solution $\mathbf{x}^*$ of $\mathbf{A} \mathbf{x} = \mathbf{b}$ given $\mathbf{A}$, $I_0$, and the corrupted measurements $\widetilde{\mathbf{b}} \coloneqq \mathbf{b} + \mathbf{b}_{\mathcal{C}}$.

To achieve convergence, we take inspiration from the QuantileRK method proposed in~\cite{HaNeReSw2022}, which modifies the RK algorithm so that each projection is sampled from a set of admissible rows whose residuals $|b_j - \mathbf{a}_j^{\tran} \mathbf{x}^k|$ are smaller than the $q$\textsuperscript{th} quantile of residual sizes at each iteration for some parameter $q \in (0, 1]$.
This modification is based on the heuristic that large residuals should be indicative of corrupted measurements, and small residuals lead to small steps that cannot divert the iterate too far away from the solution. We propose to exploit the auxiliary information by confining the iterates of QuantileRK within the ``trusted'' solution space $\mathbf{A}_{I_0} \mathbf{x} = \mathbf{b}_{I_0}$. We will refer to this procedure, summarized in Algorithm~\ref{alg:quantilescrk_method}, as the \emph{QuantileSCRK method}.

\begin{algorithm}[!htb]
\caption{Quantile Subspace Constrained Randomized Kaczmarz (QuantileSCRK)} \label{alg:quantilescrk_method}
\begin{algorithmic}[1]
    \Procedure{QuantileSCRK}{$\mathbf{A}, \widetilde{\mathbf{b}}, I_0, q, K$}
        \State $\mathbf{P} = \mathbf{I} - \mathbf{A}_{I_0}^{\dagger} \mathbf{A}_{I_0}$ \Comment{Orthogonal projector onto $\Null(\mathbf{A}_{I_0})$}
        \State \textbf{initialize} $\mathbf{x}^0 = \mathbf{A}_{I_0}^{\dagger} \widetilde{\mathbf{b}}_{I_0}$ \Comment{Initial iterate $\mathbf{x}^0$ solves $\mathbf{A}_{I_0} \mathbf{x}^0 = \widetilde{\mathbf{b}}_{I_0}$}
        \For{$k = 1, \dots, K$}
            \State $\gamma_q = q\text{-quantile}\left\{ |\widetilde{b}_j - \mathbf{a}_j^{\tran} \mathbf{x}^k| ,\, j \in [m] \setminus I_0 \right\}$ \Comment{Threshold based on residuals}
            \State $J = \left\{ j \in [m] \setminus I_0 : |\widetilde{b}_j - \mathbf{a}_j^{\tran} \mathbf{x}^k| \leq \gamma_q \right\}$ \Comment{Set of admissible rows}
            \State sample $j \in J$ with prob. proportional to $\norm{\mathbf{P} \mathbf{a}_j}^2$ \Comment{Sample admissible row}
            \State $\mathbf{x}^{k} = \mathbf{x}^{k-1} + \frac{\widetilde{b}_j - \mathbf{a}_j^{\tran} \mathbf{x}^{k-1}}{\norm{\mathbf{P} \mathbf{a}_j}} \cdot \frac{\mathbf{P} \mathbf{a}_j}{\norm{\mathbf{P} \mathbf{a}_j}}$ \Comment{Project onto $\mathbf{A}_{{I_0} \cup \{ j \}} \mathbf{x} = \widetilde{\mathbf{b}}_{I_0 \cup \{ j \}}$}
        \EndFor
        \State \textbf{return} $\mathbf{x}^K$
    \EndProcedure
\end{algorithmic}
\end{algorithm}

\vspace{1mm} 

We prove the following result for QuantileSCRK, a simplified version of Theorem~\ref{thm:quantilescrk_convergence} that we defer the precise statement of to Section~\ref{sec:corruptions_analysis}. It shows that for unstructured matrices modelled by continuous ``Gaussian-like'' random matrices, the QuantileSCRK method robustly and efficiently converges, provided that there is enough external knowledge (in terms of $m_0$) to make the \emph{effective aspect ratio} $(m - m_0) / (n - m_0)$ large enough, and the proportion of corrupted measurements, $\beta \coloneqq |\mathcal{C}| / (m - m_0)$, is not too large:

\begin{theorem}[Simplified version of Theorem~\ref{thm:quantilescrk_convergence}] \label{thm:quantilescrk_convergence_simplified}
Assuming that $\mathbf{A}$ is a continuous ``Gaussian-like'' random matrix, there exist positive constants $R \geq 1$, $\beta_0 < 1$, $c_1$ and $c_2$, which are independent of $m$ and $n$, such that if $(m - m_0) / (n - m_0) \geq R$ and $\beta \leq \beta_0$, then with probability at least $1 - e^{-c_1 (m - m_0)}$ over the randomness in $\mathbf{A}$, the QuantileSCRK iterates $\mathbf{x}^k$ from Algorithm~\ref{alg:quantilescrk_method} converge to the solution $\mathbf{x}^*$ with
\begin{equation} \label{eq:quantilescrk_convergence_simplified_2}
    \mathbb{E} \norm{\mathbf{x}^k - \mathbf{x}^*}^2 \leq \left( 1 - \frac{c_2}{n - m_0} \right)^k \cdot \norm{\mathbf{x}^0 - \mathbf{x}^*}^2 .
\end{equation}
\end{theorem}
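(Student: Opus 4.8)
The plan is to run a QuantileRK-style one-step contraction argument, but carried out entirely inside the $(n-m_0)$-dimensional subspace $V \coloneqq \Null(\mathbf{A}_{I_0})$ on which the iterates evolve, and to import the dimension-reduction picture already established for corruption-free SCRK on random data (Theorem~\ref{thm:subgauss_scrk_convergence}). Since $I_0$ is corruption-free, $\widetilde{\mathbf{b}}_{I_0} = \mathbf{b}_{I_0}$, so both $\mathbf{x}^0 = \mathbf{A}_{I_0}^{\dagger}\widetilde{\mathbf{b}}_{I_0}$ and $\mathbf{x}^*$ satisfy $\mathbf{A}_{I_0}\mathbf{x} = \mathbf{b}_{I_0}$, and by the update formula (Lemma~\ref{lem:scrk_update_formula}) the error $\mathbf{e}^k \coloneqq \mathbf{x}^k - \mathbf{x}^*$ stays in $V$ for all $k$. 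Writing $r_j \coloneqq (\mathbf{P}\mathbf{a}_j)^{\tran}\mathbf{e}^k = \mathbf{a}_j^{\tran}\mathbf{e}^k$, a direct expansion of the QuantileSCRK update in Algorithm~\ref{alg:quantilescrk_method} (using $\widetilde{b}_j - \mathbf{a}_j^{\tran}\mathbf{x}^k = (\mathbf{b}_{\mathcal{C}})_j - r_j$) collapses to the clean identity
\[
    \norm{\mathbf{e}^{k+1}}^2 = \norm{\mathbf{e}^k}^2 - \frac{r_j^2}{\norm{\mathbf{P}\mathbf{a}_j}^2} + \frac{(\mathbf{b}_{\mathcal{C}})_j^2}{\norm{\mathbf{P}\mathbf{a}_j}^2},
\]
which separates the projection gain $-r_j^2/\norm{\mathbf{P}\mathbf{a}_j}^2$ (present for every row) from the corruption penalty $+(\mathbf{b}_{\mathcal{C}})_j^2/\norm{\mathbf{P}\mathbf{a}_j}^2$ (which vanishes for clean rows).

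The structural engine is that, conditionally on $\mathbf{A}_{I_0}$, the projected rows $\{\mathbf{P}\mathbf{a}_j\}_{j\in I_1}$ are $m-m_0$ independent Gaussian-like vectors living in $V$; equivalently, $\mathbf{A}_{I_1}\mathbf{P}$ acts within $V$ as a Gaussian-like matrix of effective size $(m-m_0)\times(n-m_0)$. This is precisely the dimension-reduction behind Theorem~\ref{thm:subgauss_scrk_convergence}, and it is what makes the target rate $1 - c_2/(n-m_0)$ natural: the clean subsystem mimics an $(m-m_0)\times(n-m_0)$ Gaussian system whose aspect ratio $(m-m_0)/(n-m_0) \ge R$ is bounded below by a constant. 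On a good event $\mathcal{E}$ of probability at least $1 - e^{-c_1(m-m_0)}$ I would collect the standard Gaussian consequences, holding \emph{uniformly over all unit directions $\mathbf{u}\in V$}: (i) two-sided control $\norm{\mathbf{P}\mathbf{a}_j}^2 = \Theta(n-m_0)$ for all $j$ and $\norm{\mathbf{A}_{I_1}\mathbf{P}}_F^2 = \Theta((m-m_0)(n-m_0))$; (ii) concentration of the empirical $q$-quantile $\gamma_q$ of $\{|\widetilde b_j - \mathbf{a}_j^{\tran}\mathbf{x}^k|\}$ around a fixed multiple of $\norm{\mathbf{e}^k}$ (using that a $\beta$-fraction of corruptions can shift the quantile of the half-normal-like residual distribution only by a controlled amount when $\beta \le \beta_0$); and (iii) a \emph{restricted} lower bound $\sum_{j\in S}r_j^2 \gtrsim (m-m_0)\norm{\mathbf{e}^k}^2$ for every subset $S\subseteq I_1$ of size at least a constant fraction of $m-m_0$.

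On $\mathcal{E}$ I would then prove the one-step bound $\mathbb{E}[\norm{\mathbf{e}^{k+1}}^2 \mid \mathbf{x}^k] \le (1 - c_2/(n-m_0))\norm{\mathbf{e}^k}^2$ by summing the identity above against the sampling weights $\norm{\mathbf{P}\mathbf{a}_j}^2/W$, with $W \coloneqq \sum_{i\in J}\norm{\mathbf{P}\mathbf{a}_i}^2$. The clean admissible rows contribute $-\frac{1}{W}\sum_{j\in J\setminus\mathcal{C}}r_j^2$, which by (i) and (iii) is at most $-c_2\norm{\mathbf{e}^k}^2/(n-m_0)$, exactly the SCRK/random-matrix contraction of Theorem~\ref{thm:subgauss_scrk_convergence} restricted to $J$. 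The corrupted admissible rows contribute $+\frac{1}{W}\sum_{j\in J\cap\mathcal{C}}((\mathbf{b}_{\mathcal{C}})_j^2 - r_j^2)$; here admissibility forces $|(\mathbf{b}_{\mathcal{C}})_j - r_j|\le\gamma_q$, so the telescoping $(\mathbf{b}_{\mathcal{C}})_j^2 - r_j^2 = ((\mathbf{b}_{\mathcal{C}})_j - r_j)((\mathbf{b}_{\mathcal{C}})_j + r_j)$ is controlled by $\gamma_q$ and the quantile-bounded residuals, while the count $|J\cap\mathcal{C}|\le|\mathcal{C}| = \beta(m-m_0)$ is small. Choosing $\beta_0$ small enough and $R$ large enough makes the aggregate penalty a strict fraction of the clean gain, yielding net contraction; iterating over $k$ gives~\eqref{eq:quantilescrk_convergence_simplified_2}.

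The principal obstacle is that everything is adaptive: $\mathbf{x}^k$, hence the direction $\mathbf{e}^k/\norm{\mathbf{e}^k}$, the threshold $\gamma_q$, and the admissible set $J$, are random and depend on the entire past trajectory, so one cannot freeze a direction and invoke scalar concentration. The quantile concentration (ii) and the restricted lower bound (iii) must therefore hold \emph{simultaneously} for all unit $\mathbf{u}\in V$ and all relevant subsets, which I would obtain via a covering/net argument on the unit sphere of $V$ together with a union bound over subsets; it is exactly the demand that $m-m_0$ samples overwhelm this net that forces the effective aspect ratio past a constant $R$, and the union-bound failure is what produces the $e^{-c_1(m-m_0)}$ term. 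Intertwined with this is the adversarial nature of $\mathbf{b}_{\mathcal{C}}$: corruptions may tune $(\mathbf{b}_{\mathcal{C}})_j$ so that a far hyperplane (large $|r_j|$) looks admissible, and the delicate point, carried out in full in Section~\ref{sec:corruptions_analysis}, is to show that the cancellation in $(\mathbf{b}_{\mathcal{C}})_j^2 - r_j^2$ together with the smallness of $\beta_0$ keeps the worst-case corruption damage strictly below the clean progress, uniformly over both the hidden corruption pattern and the adaptively chosen $\mathbf{e}^k$.
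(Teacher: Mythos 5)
Your proposal is correct and follows essentially the same route as the paper: a one-step decomposition into clean-row gain and corrupted-row penalty (your identity $\norm{\mathbf{e}^{k+1}}^2 - \norm{\mathbf{e}^k}^2 = \left((\mathbf{b}_{\mathcal{C}})_j^2 - r_j^2\right)/\norm{\mathbf{P}\mathbf{a}_j}^2$ is an equivalent repackaging of the expansion in the proof of Lemma~\ref{lem:quantilescrk_convergence_condition}), combined with uniform spectral control of $\mathbf{A}_{I_1}\mathbf{P}$ over row submatrices (Lemmas~\ref{lem:subgauss_uniform_min_singval} and~\ref{lem:uniform_max_frobnorm}), proved via nets, anti-concentration, and union bounds after the Lemma~\ref{lem:proj_matrix_singvals} rotation to an effective $(m-m_0)\times(n-m_0)$ system, with $R$ large and $\beta_0$ small making the penalty a strict fraction of the gain. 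The only substantive deviation is that your item (ii) --- two-sided concentration of $\gamma_q$ uniformly over directions --- is more machinery than needed: the paper, following Steinerberger, uses only the one-sided deterministic counting bound $\gamma_q \leq \sigma_{\mathrm{max}}(\mathbf{A}_{I_1}\mathbf{P}) \norm{\mathbf{e}^k} / \sqrt{(m-m_0)(1-q-\beta)+|S|}$, which sidesteps any empirical-process argument for the quantile and is in fact the only property of $\gamma_q$ that your own penalty estimate invokes.
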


Since we are interested in large-scale systems with $m, n \gg 1$, the values of the constants $c_1$ and $c_2$ are dominated by $m$ and $n$ (e.g.\ the probability guarantee is exponentially close to one for large $m$).
Note that this result applies to almost-square matrices with $m = (1 + o(1)) n$ rows provided $m_0$ is big enough, which lies outside the scope of existing QuantileRK theory.
Experimentally, we found that the QuantileSCRK method works well for more general data models than described by the theory, such as when $\mathbf{A}$ is a structured sparse matrix in an image reconstruction problem (see Section~\ref{subsec:experiments_ct}).

\begin{remark}
\begin{enumerate}[label=(\roman*), leftmargin=*]
    \item (Rejection sampling).
    To avoid recomputing the normalizing constant $Z_J = \sum_{j \in J} \norm{\mathbf{P} \mathbf{a}_j}^2$ in every iteration of Algorithm~\ref{alg:quantilescrk_method}
    for sampling a row from the admissible set $J$, which depends on $\mathbf{x}^k$, rejection sampling (as originally proposed in~\cite{HaNeReSw2022}) can be used: i.e.\ in each iteration, a row $j \in I_1$ is sampled with probability $\norm{\mathbf{P} \mathbf{a}_j} / \norm{\mathbf{A}_{I_1} \mathbf{P}}_F^2$, and the projection is made if and only if $|b_j - \mathbf{a}_j^{\tran} \mathbf{x}^k| \leq \gamma_q$.

    \item (Uniform sampling).
    It is computationally more efficient to sample rows $\mathbf{a}_j$ uniformly at random from $I_1$, together with rejection sampling, in Algorithm~\ref{alg:quantilescrk_method}. By using the threshold $\widetilde{\gamma}_q = q\text{-quantile}\left\{ |b_j - \mathbf{a}_j^{\tran} \mathbf{x}^k| / \norm{\mathbf{P} \mathbf{a}_j} ,\, j \in I_1 \right\}$, which has been modified to capture the heterogeneity of the projected row norms, instead of $\gamma_q$, it can be shown that analogues of our results (e.g.~Theorem~\ref{thm:quantilescrk_convergence} and Lemma~\ref{lem:quantilescrk_convergence_condition}) still hold, except that the relevant spectral quantities come from the matrix $\mathbf{D} \mathbf{A}_{I_1} \mathbf{P}$, where $\mathbf{D}$ is the diagonal matrix with entries $\norm{\mathbf{P} \mathbf{a}_j}^{-1}$, $j \in I_1$, instead of $\mathbf{A}_{I_1} \mathbf{P}$.
\end{enumerate}
\end{remark}

\subsection{Organization} \label{subsec:organization}

Section~\ref{sec:related_work} discusses related works. Section~\ref{sec:scrk_analysis} analyzes the SCRK algorithm: we prove the convergence result (Theorem~\ref{thm:scrk_convergence}) in Section~\ref{subsec:update_formula} and generalize it to the noisy setting in Section~\ref{sec:noisy_case} (Theorem~\ref{thm:noisy_scrk_convergence}). We provide several results on using the SCRK method to exploit low-rank structure and geometric properties of $\mathbf{A}$ in Section~\ref{subsec:scrk_structure_discussion}. Furthermore, we show that the subspace constraint acts as a form of dimension reduction when $\mathbf{A}$ is a Gaussian-like random matrix in Section~\ref{sec:random_matrix_analysis}. Section~\ref{sec:corruptions_analysis} analyzes the QuantileSCRK algorithm for solving corrupted linear systems. We provide various numerical experiments in Section~\ref{sec:experiments} to complement our theoretical results, and conclude in Section~\ref{sec:conclusion}.

\section{Related works} \label{sec:related_work}

\paragraph{Kaczmarz-type methods}
Kaczmarz-type algorithms are related to a variety of modern algorithms in (randomized) numerical linear algebra and (stochastic) optimization.
Randomized Kaczmarz (RK) can be viewed as an instance of the stochastic gradient descent (SGD) algorithm with a particular step size, which, based on this connection, has led to new insights into both methods, such as highlighting the role of weighted sampling for SGD~\cite{NeSrWa2016}.
Furthermore, the RK method is one of the basic representatives of the sketch-and-project method~\cite{GowerRichtarik2015}, which provides a unified framework for iteratively solving linear systems -- including the randomized coordinate descent method, related block variants, and the randomized Newton method -- and can also be directly extended to non-linear optimization problems~\cite{GKLR2019}.

Recently, methods based on the Kaczmarz algorithm have been used in the design of more sophisticated linear solvers.
A GMRES-type solver preconditioned by randomized and greedy Kaczmarz inner-iterations is studied in~\cite{du2021kaczmarz}.
Within the sketch-and-project framework, a randomized block Kaczmarz algorithm that uses the preconditioned conjugate gradient method to perform an inexact projection in each iteration is analyzed in~\cite{derezinski2023solving}, and the method is proven to be especially computationally efficient when the matrix $\mathbf{A}$ has a flat-tailed spectrum.
An iterative solver that further combines these ideas with momentum and sparse sketching matrices is analyzed in~\cite{derezinski2024fine}.
In this paper, we focus on the randomized Kaczmarz algorithm for solving systems of linear equations.

\paragraph{Randomized Kaczmarz}
The analysis of the RK algorithm in~\cite{StrohmerVershynin2009} spurred many developments and variants, including randomized block Kaczmarz methods~\cite{NeedellTropp2014, Necoara2019} and Kaczmarz-Motzkin methods that combine sampling and greedy row selection~\cite{LoHaNe2017, HaddockMa2021}.
It is shown that the Kaczmarz method can be extended to solve least squares problems in~\cite{ZouziasFreris2013, NeZhZo2015, MaNeRa2015}, and systems of linear inequalities in~\cite{LeventhalLewis2010, BriskmanNeedell2015}.
The duality between RK and randomized coordinate descent (also called randomized Gauss-Seidel) has motivated a unified description of the two methods and their extended versions~\cite{MaNeRa2015}.
By using ideas from optimization, RK methods with varying step sizes~\cite{NeSrWa2016}, acceleration~\cite{LiuWright2016}, and that promote sparsity~\cite{LorenzEtAl2014, SchopferLorenz2019, SchopferEtAl2022} have also been studied.

The SCRK method resembles the block Kaczmarz method studied in~\cite{NeedellTropp2014}: the difference is that the blocks differ by one row between iterations and thus the update simplifies so that computing new pseudoinverses is not required. Therefore, the SCRK method can offer a similar advantage from using blocks in an efficient manner if a ``good'' block $\mathbf{A}_{I_0}$ can be found (see Section~\ref{subsec:scrk_structure_discussion} for a discussion of what a good block is, and how one might be found). In~\cite{NeedellWard2013, Wu2022}, a two-subspace Kaczmarz method that iteratively projects onto the solution space associated with two rows is shown to significantly outperform the RK method when the system has correlated rows. The SCRK method can be considered as an extension of this idea to higher dimensional subspaces (see Remark~\ref{rmk:scrk_block_update_general}).

The idea of constraining the iterates of the RK algorithm has also implicitly appeared in the design of a fast solver for Laplacian systems in the theoretical computer science literature in~\cite{KOSZ2013}, where the row spaces of the blocks $\mathbf{A}_{I_0}$ and $\mathbf{A}_{I_1}$ are orthogonal by construction and hence Corollary~\ref{cor:scrk_convergence_orthog} applies. The SCRK method offers a general framework for analyzing convergence when $\mathbf{A}_{I_0}$ and $\mathbf{A}_{I_1}$ are not orthogonal.
Randomized sketch descent methods for solving optimization problems subject to linear constraints are also studied in~\cite{NecoaraTakac2021}, where in each iteration the coordinate space (corresponding to $\mathbf{x}$) is randomly sketched for dimensionality reduction and (random) projection matrices, analogous to $\mathbf{P}$, enforce the linear constraints.

\paragraph{Corrupted linear systems}
The literature on solving linear systems with arbitrary sparse corruptions is abundant: see, e.g.,~\cite{amaldi1995complexity, amaldi2005randomized, dalalyan2019outlier, candes2005error, StKuPoBo2012}. Such problems are often tackled within the compressed sensing and robust statistics literature using methods based on linear or SDP relaxations. The closest line of work to our approach is on iterative, row-action, corruption-avoiding algorithms. The first Kaczmarz-type method was proposed in~\cite{HaddockNeedell2019}, which introduced the idea that large residuals should be indicative of corrupted equations, but makes strong restrictions on the number of corrupted measurements (scaling sublinearly with $m$).
The QuantileRK method, introduced in~\cite{HaNeReSw2022}, utilizes quantile-based steps based on this residual heuristic. An important bottleneck of this method is that the linear system generally needs to be sufficiently overdetermined (i.e.\ $m \geq Cn$ for a large constant $C$) to guarantee convergence. We show that with enough external knowledge (i.e.\ $m_0$ large enough), the QuantileSCRK method works even for almost-square systems. Other works studying the QuantileRK method include~\cite{Steinerberger2023, JarmanNeedell2021, ChJaNeRe2022}; in particular, we adapt a deterministic sufficient condition for convergence from~\cite{Steinerberger2023} for QuantileSCRK (see Lemma~\ref{lem:quantilescrk_convergence_condition}).
Another Kaczmarz-type method based on obtaining sparse least squares solutions is analyzed in~\cite{SchopferEtAl2022} and demonstrated to be able to solve linear systems corrupted by impulsive noise.

\section{Analysis of the SCRK method} \label{sec:scrk_analysis}

In this section, we provide theoretical analysis of the SCRK method (Algorithm~\ref{alg:scrk_method}). Recall that $\mathbf{P} = \mathbf{I} - \mathbf{A}_{I_0}^{\dagger} \mathbf{A}_{I_0}$ is the orthogonal projector onto $\Null(\mathbf{A}_{I_0})$, which is equal to $\Range(\mathbf{A}_{I_0}^{\tran})^{\perp}$, the orthogonal complement of the row space of $\mathbf{A}_{I_0}$.

\subsection{Simplified SCRK update formula and proof of Theorem~\ref{thm:scrk_convergence}} \label{subsec:update_formula}

First, we provide a proof of how the block update~\eqref{eq:scrk_block_update} simplifies to the more interpretable and computationally efficient formula~\eqref{eq:scrk_update_main}.

\begin{lemma} \label{lem:scrk_update_formula}
Let $\mathbf{x}^{k+1} = \mathbf{x}^k + \mathbf{A}_{I_0 \cup \{j \}}^{\dagger} (\mathbf{b}_{I_0 \cup \{ j \}} - \mathbf{A}_{I_0 \cup \{ j \}} \mathbf{x}^k)$ and $\mathbf{P} = \mathbf{I} - \mathbf{A}_{I_0}^{\dagger} \mathbf{A}_{I_0}$. If $\mathbf{x}^k$ solves $\mathbf{A}_{I_0} \mathbf{x}^k = \mathbf{b}_{I_0}$ and $\mathbf{P} \mathbf{a}_j \ne \mathbf{0}$, then
\[
    \mathbf{x}^{k+1} = \mathbf{x}^k + \frac{b_j - \mathbf{a}_j^{\tran} \mathbf{x}^k}{\norm{\mathbf{P} \mathbf{a}_j}} \cdot \frac{\mathbf{P} \mathbf{a}_j}{{\norm{\mathbf{P} \mathbf{a}_j}}} .
\]
\end{lemma}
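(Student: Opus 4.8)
The plan is to recognize the block update~\eqref{eq:scrk_block_update} as the orthogonal projection of $\mathbf{x}^k$ onto the affine solution space $S \coloneqq \{ \mathbf{x} : \mathbf{A}_{I_0 \cup \{ j \}} \mathbf{x} = \mathbf{b}_{I_0 \cup \{ j \}} \}$, and then to verify that the proposed simplified formula produces the unique point carrying the characterizing properties of that projection. Since the full system $\mathbf{A} \mathbf{x} = \mathbf{b}$ is consistent, $\mathbf{x}^*$ solves every subsystem, so $\mathbf{b}_{I_0 \cup \{ j \}} \in \Range(\mathbf{A}_{I_0 \cup \{ j \}})$ and the augmented subsystem is consistent; this is the only place consistency is needed.

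First I would recall the standard fact that for any consistent system $\mathbf{M} \mathbf{z} = \mathbf{c}$, the map $\mathbf{y} \mapsto \mathbf{y} + \mathbf{M}^{\dagger}(\mathbf{c} - \mathbf{M} \mathbf{y})$ is the orthogonal projection onto the solution set $\mathbf{M}^{\dagger} \mathbf{c} + \Null(\mathbf{M})$. This is a one-line computation using $\mathbf{M} \mathbf{M}^{\dagger} \mathbf{c} = \mathbf{c}$ (by consistency), which rewrites the map as $(\mathbf{I} - \mathbf{M}^{\dagger} \mathbf{M}) \mathbf{y} + \mathbf{M}^{\dagger} \mathbf{c}$, where $\mathbf{I} - \mathbf{M}^{\dagger} \mathbf{M}$ is precisely the orthogonal projector onto $\Null(\mathbf{M})$. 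Taking $\mathbf{M} = \mathbf{A}_{I_0 \cup \{ j \}}$ and $\mathbf{c} = \mathbf{b}_{I_0 \cup \{ j \}}$ identifies $\mathbf{x}^{k+1}$ with the orthogonal projection of $\mathbf{x}^k$ onto $S$.

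The orthogonal projection onto $S$ is the unique point $\mathbf{z} \in S$ for which the displacement $\mathbf{z} - \mathbf{x}^k$ lies in the orthogonal complement of the direction space of $S$, namely $\Range(\mathbf{A}_{I_0 \cup \{ j \}}^{\tran}) = \Range(\mathbf{A}_{I_0}^{\tran}) + \mathrm{span}(\mathbf{a}_j)$. I would therefore check that the candidate $\mathbf{z} = \mathbf{x}^k + \frac{b_j - \mathbf{a}_j^{\tran} \mathbf{x}^k}{\norm{\mathbf{P} \mathbf{a}_j}^2} \mathbf{P} \mathbf{a}_j$ has both properties. Membership in $S$ uses that $\mathbf{P} \mathbf{a}_j \in \Null(\mathbf{A}_{I_0})$ (so $\mathbf{A}_{I_0} \mathbf{z} = \mathbf{A}_{I_0} \mathbf{x}^k = \mathbf{b}_{I_0}$ by hypothesis) together with the identity $\mathbf{a}_j^{\tran} \mathbf{P} \mathbf{a}_j = \mathbf{a}_j^{\tran} \mathbf{P}^{\tran} \mathbf{P} \mathbf{a}_j = \norm{\mathbf{P} \mathbf{a}_j}^2$, valid since $\mathbf{P}$ is a symmetric idempotent, which shows the scalar coefficient is calibrated exactly so that $\mathbf{a}_j^{\tran} \mathbf{z} = b_j$; orthogonality of the displacement follows from $\mathbf{P} \mathbf{a}_j = \mathbf{a}_j - \mathbf{A}_{I_0}^{\dagger} \mathbf{A}_{I_0} \mathbf{a}_j \in \mathrm{span}(\mathbf{a}_j) + \Range(\mathbf{A}_{I_0}^{\tran})$, since $\Range(\mathbf{A}_{I_0}^{\dagger}) = \Range(\mathbf{A}_{I_0}^{\tran})$. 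By uniqueness of the projection, $\mathbf{z} = \mathbf{x}^{k+1}$, which is the claimed formula; the hypothesis $\mathbf{P} \mathbf{a}_j \ne \mathbf{0}$ is exactly what makes the division well-defined.

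All of these computations are short; I expect no substantive obstacle. The only points requiring care are the clean identification of the pseudoinverse update with an orthogonal projection (and the attendant use of consistency), and keeping the range/null-space decomposition of the augmented matrix $\mathbf{A}_{I_0 \cup \{ j \}}$ straight so that the orthogonality condition is verified against the correct subspace.
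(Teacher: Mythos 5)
Your proposal is correct, and it reaches the formula by a genuinely different route than the paper. The paper's proof also begins by identifying $\mathbf{x}^{k+1}$ with the orthogonal projection of $\mathbf{x}^k$ onto the solution space $\mathbf{A}_{I_0 \cup \{j\}} \mathbf{x} = \mathbf{b}_{I_0 \cup \{j\}}$, but then \emph{derives} the update by solving the constrained minimization~\eqref{lem:scrk_update_formula_pf1} with Lagrange multipliers: premultiplying the stationarity equation by $\mathbf{P}$ forces $\mathbf{z} = -\tau \mathbf{P}\mathbf{a}_j$, and a second premultiplication by $\mathbf{z}^{\tran}$ pins down $\tau$. You instead \emph{verify} the candidate point against the two characterizing properties of the projection --- feasibility (where the calibration $\mathbf{a}_j^{\tran}\mathbf{P}\mathbf{a}_j = \norm{\mathbf{P}\mathbf{a}_j}^2$ does the work) and orthogonality of the displacement to $\Null(\mathbf{A}_{I_0 \cup \{j\}})$, via $\mathbf{P}\mathbf{a}_j \in \mathrm{span}(\mathbf{a}_j) + \Range(\mathbf{A}_{I_0}^{\tran})$ --- and conclude by uniqueness. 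Your route is more elementary (no multipliers) and, unlike the paper, makes explicit the step the paper asserts without proof, namely that the pseudoinverse update $\mathbf{y} \mapsto \mathbf{y} + \mathbf{M}^{\dagger}(\mathbf{c} - \mathbf{M}\mathbf{y})$ is the orthogonal projection when $\mathbf{M}\mathbf{z} = \mathbf{c}$ is consistent; the trade-off is that verification requires knowing the answer in advance, whereas the Lagrangian derivation produces it. One small refinement: you invoke global consistency of $\mathbf{A}\mathbf{x} = \mathbf{b}$ to get consistency of the augmented subsystem, but this actually follows from the lemma's hypotheses alone --- since $\mathbf{A}_{I_0}\mathbf{x}^k = \mathbf{b}_{I_0}$ and $\mathbf{P}\mathbf{a}_j \ne \mathbf{0}$, the point $\mathbf{x}^k + t\,\mathbf{P}\mathbf{a}_j$ with $t = (b_j - \mathbf{a}_j^{\tran}\mathbf{x}^k)/\norm{\mathbf{P}\mathbf{a}_j}^2$ already solves the augmented subsystem --- so no assumption beyond the lemma statement is needed, which matches the paper's alternative algebraic proof via Lemma~\ref{lem:pseudoinverse_block_formula} sketched in Remark~\ref{rmk:scrk_block_update_general}.
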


\begin{proof}
We may assume $b_j - \mathbf{a}_j^{\tran} \mathbf{x}^k \ne 0$, otherwise $\mathbf{x}^{k+1} = \mathbf{x}^k$. Since $\mathbf{x}^{k+1}$ is the orthogonal projection of $\mathbf{x}^k$ onto the solution space $\mathbf{A}_{I_0 \cup \{ j \}} \mathbf{x} = \mathbf{b}_{I_0 \cup \{ j \}}$, the increment $\mathbf{z} \coloneqq \mathbf{x}^{k+1} - \mathbf{x}^k$ is the solution of the following optimization problem:
\begin{equation} \label{lem:scrk_update_formula_pf1}
    \min\limits_{\mathbf{z} \in \reals^n} \frac{1}{2} \norm{\mathbf{z}}^2 \quad \text{subject to} \quad
    \mathbf{A}_{I_0} (\mathbf{x}^{k} + \mathbf{z}) = \mathbf{b}_{I_0} ,\;
    \mathbf{a}_j^{\tran} (\mathbf{x}^k + \mathbf{z}) = b_j .
\end{equation}
This can be solved by introducing the Lagrange multipliers $\boldsymbol{\lambda} \in \reals^n$ and $\tau \in \reals$ for the two constraints. Since $\mathbf{A}_{I_0} \mathbf{x}^k = \mathbf{b}_{I_0}$, the first constraint is equivalent to $\mathbf{A}_{I_0} \mathbf{z} = \mathbf{0}$, and thus $\mathbf{z}$ solves
\begin{equation} \label{lem:scrk_update_formula_pf2}
    \mathbf{z} + \mathbf{A}_{I_0}^{\tran} \boldsymbol{\lambda} + \tau \mathbf{a}_j = \mathbf{0}
\end{equation}
whilst satisfying $\mathbf{A}_{I_0} \mathbf{z} = \mathbf{0}$ and $\mathbf{a}_j^{\tran} \mathbf{z} = b_j - \mathbf{a}_j^{\tran} \mathbf{x}^k$. Since $\mathbf{P}$ is the orthogonal projector onto $\Null(\mathbf{A}_{I_0})$, $\mathbf{P} \mathbf{A}_{I_0}^{\tran} = \mathbf{0}$ and the first constraint is equivalent to $\mathbf{P} \mathbf{z} = \mathbf{z}$. Hence, pre-multiplying~\eqref{lem:scrk_update_formula_pf2} by $\mathbf{P}$ implies that $\mathbf{z} = -\tau \mathbf{P} \mathbf{a}_j$. Furthermore, pre-multiplying~\eqref{lem:scrk_update_formula_pf2} by $\mathbf{z}^{\tran}$ and using the constraints implies that $\tau = -\norm{\mathbf{z}}^2 / (b_j - \mathbf{a}_j^{\tran} \mathbf{x}^k) = -\tau^2 \norm{\mathbf{P} \mathbf{a}_j}^2 / (b_j - \mathbf{a}_j^{\tran} \mathbf{x}^k)$.
Solving for $\tau$ yields $\tau = (b_j - \mathbf{a}_j^{\tran} \mathbf{x}^k) / \norm{\mathbf{P} \mathbf{a}_j}^2$,
which completes the proof.
\end{proof}

\begin{remark}
From the optimization formulation~\eqref{lem:scrk_update_formula_pf1}, it can also be shown that the unit direction $\mathbf{P} \mathbf{a}_j / \norm{\mathbf{P} \mathbf{a}_j}$ taken from $\mathbf{x}^k$ to reach $\mathbf{x}^{k+1}$ maximizes $|\mathbf{a}_j^{\tran} \tilde{\mathbf{z}}|^2$ over all unit vectors $\tilde{\mathbf{z}} \in \Null(\mathbf{A}_{I_0})$. This provides a nice geometric interpretation of the SCRK update: \emph{the direction $\mathbf{P} \mathbf{a}_j$ taken to reach the solution space $\mathbf{A}_{I_0 \cup \{j\}} \mathbf{x} = \mathbf{b}_{I_0 \cup \{j\}}$ minimizes the angle from the optimal direction $\mathbf{a}_j$ for reaching the solution space $\mathbf{a}_j^{\tran} \mathbf{x} = b_j$ within the subspace $\Null(\mathbf{A}_{I_0})$}; see Figure~\ref{fig:subspacerk_picture} for an illustration.
For an alternative algebraic proof of a more general version of Lemma~\ref{lem:scrk_update_formula}, see Remark~\ref{rmk:scrk_block_update_general} later.
\end{remark}

\begin{figure}[!htb]
    \centering
    \includegraphics[width=0.55\textwidth]{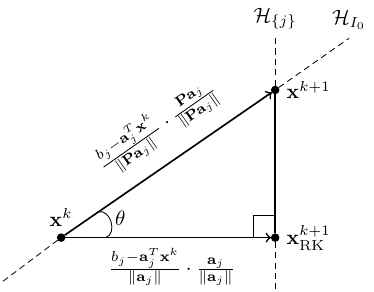}
    \caption{SCRK update from the current iterate $\mathbf{x}^k$ for reaching the vector $\mathbf{x}^{k+1}$ in the solution space $\mathcal{H}_{\{j\}} = \{ \mathbf{x} \in \reals^n : \mathbf{a}_j^{\tran} \mathbf{x} = b_j \}$ whilst remaining within
    $\mathcal{H}_{I_0} = \{ \mathbf{x} \in \reals^n : \mathbf{A}_{I_0} \mathbf{x} = \mathbf{b}_{I_0} \}$, compared to the RK update for reaching $\mathbf{x}^{k+1}_{\mathrm{RK}}$ alone.}
    \label{fig:subspacerk_picture}
\end{figure}

We will now use the simplified update formula in Lemma~\ref{lem:scrk_update_formula} to prove Theorem~\ref{thm:scrk_convergence}.

\begin{proof}[Proof of Theorem~\ref{thm:scrk_convergence}]
Consider the $k$\textsuperscript{th} iterate $\mathbf{x}^k$. Suppose that $\mathbf{a}_j$ is sampled in the next iteration (with $\mathbf{P} \mathbf{a}_j \ne \mathbf{0}$).
By subtracting $\mathbf{x}^*$ from both sides of~\eqref{eq:scrk_update_main} and noting that $\mathbf{a}_j^{\tran}(\mathbf{x}^k - \mathbf{x}^*) = \mathbf{a}_j^{\tran}\mathbf{P}(\mathbf{x}^k - \mathbf{x}^*)$ for any $j \in I_1$ since $\mathbf{x}^k - \mathbf{x}^* \in \Null(\mathbf{A}_{I_0})$, we have
\[
    \mathbf{x}^{k+1} - \mathbf{x}^* = \left( \mathbf{I} - \mathbf{v} \mathbf{v}^{\tran} \right) (\mathbf{x}^k - \mathbf{x}^*)
    \quad \text{for } \mathbf{v} \coloneqq \frac{\mathbf{P} \mathbf{a}_j}{\norm{\mathbf{P} \mathbf{a}_j}} .
\]
Since $\mathbf{v} \mathbf{v}^{\tran}$ is an orthogonal projector, $(\mathbf{x}^{k+1} - \mathbf{x}^*) \perp \mathbf{v} \mathbf{v}^{\tran}(\mathbf{x}^{k} - \mathbf{x}^*)$. Thus, by Pythagoras' theorem,
\[
    \norm{\mathbf{x}^{k+1} - \mathbf{x}^*}^2 = \norm{\mathbf{x}^k - \mathbf{x}^*}^2 - \| \mathbf{v} \mathbf{v}^{\tran} (\mathbf{x}^{k} - \mathbf{x}^*) \|^2 = \norm{\mathbf{x}^k - \mathbf{x}^*}^2 - \left|  \mathbf{v}^{\tran} (\mathbf{x}^k - \mathbf{x}^*) \right|^2 .
\]
By taking expectation (where each row $\mathbf{a}_j$ is sampled with probability $\norm{\mathbf{P} \mathbf{a}_j}^2 / \norm{\mathbf{A}_{I_1} \mathbf{P}}_F^2$), conditional on all the choices up to the $k$\textsuperscript{th} iteration, we obtain
\begin{align}
    \mathbb{E}_k \norm{\mathbf{x}^{k+1} - \mathbf{x}^*}^2
    &= \norm{\mathbf{x}^k - \mathbf{x}^*}^2 - \sum_{j \in I_1: \mathbf{P} \mathbf{a}_j \ne \mathbf{0}} \frac{\norm{\mathbf{P} \mathbf{a}_j}^2}{\norm{\mathbf{A}_{I_1} \mathbf{P}}_F^2} \cdot \left| \left( \frac{\mathbf{P} \mathbf{a}_j}{\norm{\mathbf{P} \mathbf{a}_j}} \right)^{\tran} (\mathbf{x}^k - \mathbf{x}^*) \right|^2 \nonumber\\
    &= \norm{\mathbf{x}^k - \mathbf{x}^*}^2 - \frac{1}{\norm{\mathbf{A}_{I_1} \mathbf{P}}_F^2} \norm{\mathbf{A}_{I_1} \mathbf{P} (\mathbf{x}^k - \mathbf{x}^*)}^2 \nonumber\\
    &= \left( 1 - \frac{\theta^2}{\norm{\mathbf{A}_{I_1} \mathbf{P}}_F^2} \right) \cdot \norm{\mathbf{x}^k - \mathbf{x}^*}^2 , \quad \text{where } \theta \coloneqq \normbig{\mathbf{A}_{I_1} \mathbf{P} \left( \frac{\mathbf{x}^k - \mathbf{x}^*}{\norm{\mathbf{x}^k - \mathbf{x}^*}}\right)} . \nonumber 
\end{align}

The next step is to estimate $\theta$ from below, which requires more care than a similar estimate used to prove convergence of the RK method~\cite{StrohmerVershynin2009} since $\mathbf{A}_{I_1} \mathbf{P}$ has a nontrivial nullspace.
A similar case where the system matrix has a nontrivial nullspace was also treated in~\cite{LorenzEtAl2018}.
First, observe that $\Null(\mathbf{A}_{I_1} \mathbf{P}) = \Null(\mathbf{P})$. Indeed, the nontrivial inclusion $\Null(\mathbf{A}_{I_1} \mathbf{P}) \subseteq \Null(\mathbf{P})$ follows from the observation that $\mathbf{A}_{I_1} \mathbf{P} \mathbf{y} = \mathbf{0}$ implies that $\mathbf{P} \mathbf{y} \in \Null(\mathbf{A}_{I_0}) \cap \Null(\mathbf{A}_{I_1}) = \Null(\mathbf{A}) = \{ \mathbf{0} \}$, since $\mathbf{A}$ has full rank. Therefore, since $\mathbf{x}^k - \mathbf{x}^* \in \Null(\mathbf{A}_{I_0})$ is orthogonal to $\Null(\mathbf{P}) = \Null(\mathbf{A}_{I_1}\mathbf{P})$,
\begin{equation} \label{eq:scrk_convergence_pf1}
    \theta = \normbig{\mathbf{A}_{I_1} \mathbf{P} \left( \frac{\mathbf{x}^k - \mathbf{x}^*}{\norm{\mathbf{x}^k - \mathbf{x}^*}} \right)}^2
    \geq \min_{\substack{\mathbf{z} \in \Null(\mathbf{A}_{I_0}) \\ \norm{\mathbf{z}} = 1}} \norm{\mathbf{A}_{I_1} \mathbf{z}}^2
    = \sigma_{\mathrm{min}}^+(\mathbf{A}_{I_1} \mathbf{P})^2 .
\end{equation}
This implies the following bound for one step of the SCRK method:
\begin{equation*}
    \mathbb{E}_k \norm{\mathbf{x}^{k+1} - \mathbf{x}^*}^2 \leq \left( 1 - \frac{\sigma_{\mathrm{min}}^+(\mathbf{A}_{I_1} \mathbf{P})^2}{\norm{\mathbf{A}_{I_1} \mathbf{P}}_F^2} \right) \cdot \norm{\mathbf{x}^k - \mathbf{x}^*}^2 .
\end{equation*}
By iterating and taking the full expectation, this concludes the proof of Theorem~\ref{thm:scrk_convergence}.
\end{proof}

\begin{remark} \label{rmk:singular_vector_directions}
If $\mathbf{v}_\ell$ is a right singular vector of $\mathbf{A}_{I_1} \mathbf{P}$ corresponding to the $\ell$\textsuperscript{th} largest singular value $\sigma_{\ell}(\mathbf{A}_{I_1} \mathbf{P})$, it can be shown that
\[
    \mathbb{E} \innprod{\mathbf{x}^{k} - \mathbf{x}^*}{\mathbf{v}_{\ell}}
    = \left( 1 - \frac{\sigma_{\ell}(\mathbf{A}_{I_1} \mathbf{P})^2}{\norm{\mathbf{A}_{I_1} \mathbf{P}}_F^2} \right)^k \cdot \innprod{\mathbf{x}^0 - \mathbf{x}^*}{\mathbf{v}_{\ell}} .
\]
This shows that the residual vector $\mathbf{x}^k - \mathbf{x}^*$ decays fastest in the directions corresponding to the largest singular values of $\mathbf{A}_{I_1}$ restricted to $\Null(\mathbf{A}_{I_0})$. This phenomenon was proved by Steinerberger~\cite{Steinerberger2021} for the RK method.
\end{remark}

\subsection{SCRK convergence on inconsistent linear systems} \label{sec:noisy_case}

In the general case, the measurement vector $\mathbf{b}$ might not be known exactly, but only accessible through a set of noisy observations $\widehat{\mathbf{b}} \coloneqq \mathbf{b} + \mathbf{r}$, where $\mathbf{r}$ is an arbitrary error vector (which is considered to be small). Similar to previous analyses of Kaczmarz methods~\cite{Needell2010, NeedellTropp2014, rebrova2021block}, we prove that if the SCRK method is used with the noisy measurements $\widehat{\mathbf{b}}$, then the iterates converge to the solution $\mathbf{x}^*$ up to an error horizon:

\begin{theorem} \label{thm:noisy_scrk_convergence}
Suppose that the rows of $\mathbf{A}$ are partitioned into two blocks $\mathbf{A}_{I_0}$ and $\mathbf{A}_{I_1}$ of sizes $m_0$ and $m - m_0$ respectively, and assume that $\mathbf{A}_{I_0}$ has full row rank.
If $\widehat{\mathbf{x}}^k$ denotes the sequence of SCRK iterates from Algorithm~\ref{alg:scrk_method} where the noisy measurement vector $\widehat{\mathbf{b}} = \mathbf{b} + \mathbf{r}$ is used in place of $\mathbf{b}$, and the initial iterate $\widehat{\mathbf{x}}^0$ solves $\mathbf{A}_{I_0} \widehat{\mathbf{x}}^0 = \widehat{\mathbf{b}}_{I_0}$, then
\begin{equation*}
    \mathbb{E} \norm{\widehat{\mathbf{x}}^k - \mathbf{x}^*}^2 \leq \left( 1 - \frac{\sigma_{\mathrm{min}}^+(\mathbf{A}_{I_1} \mathbf{P})^2}{\norm{\mathbf{A}_{I_1} \mathbf{P}}_F^2} \right)^k \cdot \norm{\widehat{\mathbf{x}}^{0} - \mathbf{x}^*}^2 + \gamma_0 + \gamma_1 ,
\end{equation*}
where $\gamma_0, \gamma_1 \geq 0$ are given by
\begin{equation*}
    \gamma_0 = \frac{2 \norm{\mathbf{r}_{I_0}}^2}{\sigma_{\mathrm{min}}(\mathbf{A}_{I_0})^2} - \norm{\mathbf{A}_{I_0}^{\dagger} \mathbf{r}_{I_0}}^2
    \quad \text{and} \quad
    \gamma_1 = \frac{\norm{\mathbf{r}_{I_1} - \mathbf{A}_{I_1} \mathbf{A}_{I_0}^{\dagger} \mathbf{r}_{I_0}}^2}{\sigma_{\mathrm{min}}^+(\mathbf{A}_{I_1} \mathbf{P})^2} .
\end{equation*}
\end{theorem}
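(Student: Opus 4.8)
The plan is to reduce the noisy recursion to the consistent analysis of Theorem~\ref{thm:scrk_convergence} plus an additive error horizon, by exploiting the fact that every SCRK iterate stays in the affine solution space $\mathbf{A}_{I_0} \mathbf{x} = \widehat{\mathbf{b}}_{I_0}$. Writing $\mathbf{e}^k \coloneqq \widehat{\mathbf{x}}^k - \mathbf{x}^*$, I would first observe that the update~\eqref{eq:scrk_update_main} moves the iterate only along $\mathbf{P} \mathbf{a}_j \in \Null(\mathbf{A}_{I_0})$, so $\mathbf{A}_{I_0} \widehat{\mathbf{x}}^k = \widehat{\mathbf{b}}_{I_0}$ for all $k$ and hence $\mathbf{A}_{I_0} \mathbf{e}^k = \widehat{\mathbf{b}}_{I_0} - \mathbf{b}_{I_0} = \mathbf{r}_{I_0}$. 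Since $\mathbf{I} - \mathbf{P} = \mathbf{A}_{I_0}^{\dagger} \mathbf{A}_{I_0}$, the component of the error in $\Range(\mathbf{A}_{I_0}^{\tran})$ is frozen at $(\mathbf{I} - \mathbf{P}) \mathbf{e}^k = \mathbf{A}_{I_0}^{\dagger} \mathbf{r}_{I_0}$ for every $k$. This is exactly where the full-row-rank hypothesis on $\mathbf{A}_{I_0}$ enters, guaranteeing both that $\mathbf{A}_{I_0} \mathbf{x} = \widehat{\mathbf{b}}_{I_0}$ is consistent and that $\norm{\mathbf{A}_{I_0}^{\dagger}} = 1/\sigma_{\mathrm{min}}(\mathbf{A}_{I_0})$. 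Orthogonality of the two subspaces then gives the decomposition $\norm{\mathbf{e}^k}^2 = \norm{\mathbf{p}^k}^2 + \norm{\mathbf{A}_{I_0}^{\dagger} \mathbf{r}_{I_0}}^2$, where $\mathbf{p}^k \coloneqq \mathbf{P} \mathbf{e}^k$.

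Next I would track the evolving nullspace component $\mathbf{p}^k$. Substituting $\widehat{b}_j = \mathbf{a}_j^{\tran} \mathbf{x}^* + r_j$ into~\eqref{eq:scrk_update_main}, the residual becomes $\widehat{b}_j - \mathbf{a}_j^{\tran} \widehat{\mathbf{x}}^k = -\mathbf{a}_j^{\tran} \mathbf{e}^k + r_j$, and using $\mathbf{a}_j^{\tran} \mathbf{e}^k = (\mathbf{P} \mathbf{a}_j)^{\tran} \mathbf{p}^k + \mathbf{a}_j^{\tran} \mathbf{A}_{I_0}^{\dagger} \mathbf{r}_{I_0}$ one obtains the recursion $\mathbf{p}^{k+1} = (\mathbf{I} - \mathbf{v} \mathbf{v}^{\tran}) \mathbf{p}^k + (s_j / \norm{\mathbf{P} \mathbf{a}_j}) \mathbf{v}$, where $\mathbf{v} \coloneqq \mathbf{P} \mathbf{a}_j / \norm{\mathbf{P} \mathbf{a}_j}$ and the \emph{effective noise} is $s_j \coloneqq r_j - \mathbf{a}_j^{\tran} \mathbf{A}_{I_0}^{\dagger} \mathbf{r}_{I_0}$, the $j$-th entry of $\mathbf{s} \coloneqq \mathbf{r}_{I_1} - \mathbf{A}_{I_1} \mathbf{A}_{I_0}^{\dagger} \mathbf{r}_{I_0}$. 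Because the added term lies along $\mathbf{v}$ while $(\mathbf{I} - \mathbf{v} \mathbf{v}^{\tran}) \mathbf{p}^k$ is orthogonal to it, Pythagoras gives the per-step identity $\norm{\mathbf{p}^{k+1}}^2 = \norm{\mathbf{p}^k}^2 - |\mathbf{v}^{\tran} \mathbf{p}^k|^2 + s_j^2 / \norm{\mathbf{P} \mathbf{a}_j}^2$, with no cross term. Taking the conditional expectation $\mathbb{E}_k$ over $j$ sampled with probability $\norm{\mathbf{P} \mathbf{a}_j}^2 / \norm{\mathbf{A}_{I_1} \mathbf{P}}_F^2$ reuses the computation from the proof of Theorem~\ref{thm:scrk_convergence} for the contraction part (here $\mathbf{p}^k \in \Null(\mathbf{A}_{I_0})$ is again orthogonal to $\Null(\mathbf{A}_{I_1} \mathbf{P})$), while the noise part telescopes to at most $\norm{\mathbf{s}}^2 / \norm{\mathbf{A}_{I_1} \mathbf{P}}_F^2$, yielding $\mathbb{E}_k \norm{\mathbf{p}^{k+1}}^2 \leq \rho \norm{\mathbf{p}^k}^2 + \norm{\mathbf{s}}^2 / \norm{\mathbf{A}_{I_1} \mathbf{P}}_F^2$ with $\rho \coloneqq 1 - \sigma_{\mathrm{min}}^+(\mathbf{A}_{I_1} \mathbf{P})^2 / \norm{\mathbf{A}_{I_1} \mathbf{P}}_F^2$.

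Iterating this one-step bound and summing the geometric series $\sum_{i \geq 0} \rho^i = \norm{\mathbf{A}_{I_1} \mathbf{P}}_F^2 / \sigma_{\mathrm{min}}^+(\mathbf{A}_{I_1} \mathbf{P})^2$ gives $\mathbb{E} \norm{\mathbf{p}^k}^2 \leq \rho^k \norm{\mathbf{p}^0}^2 + \gamma_1$, where $\gamma_1 = \norm{\mathbf{s}}^2 / \sigma_{\mathrm{min}}^+(\mathbf{A}_{I_1} \mathbf{P})^2$ is precisely the claimed constant. Adding back the frozen row-space contribution yields $\mathbb{E} \norm{\widehat{\mathbf{x}}^k - \mathbf{x}^*}^2 \leq \rho^k \norm{\mathbf{p}^0}^2 + \norm{\mathbf{A}_{I_0}^{\dagger} \mathbf{r}_{I_0}}^2 + \gamma_1$. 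Finally I would relax to the stated form using two elementary inequalities: $\norm{\mathbf{p}^0}^2 = \norm{\mathbf{P}(\widehat{\mathbf{x}}^0 - \mathbf{x}^*)}^2 \leq \norm{\widehat{\mathbf{x}}^0 - \mathbf{x}^*}^2$ for the geometric factor, and $\norm{\mathbf{A}_{I_0}^{\dagger} \mathbf{r}_{I_0}}^2 \leq \norm{\mathbf{r}_{I_0}}^2 / \sigma_{\mathrm{min}}(\mathbf{A}_{I_0})^2 \leq \gamma_0$ for the constant, the latter holding because $\gamma_0 = 2\norm{\mathbf{r}_{I_0}}^2 / \sigma_{\mathrm{min}}(\mathbf{A}_{I_0})^2 - \norm{\mathbf{A}_{I_0}^{\dagger} \mathbf{r}_{I_0}}^2 \geq \norm{\mathbf{A}_{I_0}^{\dagger} \mathbf{r}_{I_0}}^2$.

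I expect the main obstacle to be the correct identification of the effective noise $s_j$: the essential subtlety of the inconsistent case is that corruption in the distinguished block $I_0$ does not merely add a fixed offset to the error, but also propagates into the residuals of the sampled rows of $I_1$ through the term $-\mathbf{a}_j^{\tran} \mathbf{A}_{I_0}^{\dagger} \mathbf{r}_{I_0}$, so that the error horizon is governed by $\mathbf{r}_{I_1} - \mathbf{A}_{I_1} \mathbf{A}_{I_0}^{\dagger} \mathbf{r}_{I_0}$ rather than $\mathbf{r}_{I_1}$ alone. Making this precise is exactly the ``two-step decomposition'' of the block update~\eqref{eq:scrk_block_update} alluded to in Remark~\ref{rmk:scrk_block_update_general}; everything else is a routine adaptation of the consistent-case and standard noisy-RK arguments. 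The factor of two appearing in $\gamma_0$ is merely slack introduced by bounding the frozen row-space term through the operator norm of $\mathbf{A}_{I_0}^{\dagger}$ rather than keeping it exact.
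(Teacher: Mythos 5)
Your proposal is correct, and it reaches the theorem by a genuinely different route than the paper. The paper stays with the block form~\eqref{eq:scrk_block_update} of the update: it develops the block pseudoinverse decomposition (Lemma~\ref{lem:pseudoinverse_block_formula}) and the shifted-solution-space geometry (Lemma~\ref{lem:noisy_solution_space_geometry}), uses them to derive the exact one-step identity~\eqref{eq:noisy_scrk_convergence_pf1} for the \emph{full} error, and then, inside each step, expands $\norm{\widehat{\mathbf{x}}^k - \mathbf{x}^* - \mathbf{A}_{I_0}^{\dagger} \mathbf{r}_{I_0}}^2$ and bounds the cross term $(\widehat{\mathbf{x}}^k - \mathbf{x}^*)^{\tran} \mathbf{A}_{I_0}^{\dagger} \mathbf{r}_{I_0} = \mathbf{r}_{I_0}^{\tran} (\mathbf{A}_{I_0}\mathbf{A}_{I_0}^{\tran})^{-1} \mathbf{r}_{I_0}$ by $\norm{\mathbf{r}_{I_0}}^2 / \sigma_{\mathrm{min}}(\mathbf{A}_{I_0})^2$ --- this is exactly where the factor $2$ in $\gamma_0$ is born, confirming your closing remark (in fact that cross term equals $\norm{\mathbf{A}_{I_0}^{\dagger} \mathbf{r}_{I_0}}^2$ identically, which your orthogonal decomposition makes transparent). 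You instead split the error once and for all into the frozen row-space part $(\mathbf{I} - \mathbf{P})\mathbf{e}^k = \mathbf{A}_{I_0}^{\dagger} \mathbf{r}_{I_0}$ and the evolving part $\mathbf{p}^k = \mathbf{P}\mathbf{e}^k$, and run an exact Pythagorean recursion on $\mathbf{p}^k$ using only the simplified update~\eqref{eq:scrk_update_main}; your $\mathbf{p}^k$ is precisely the paper's $\widehat{\mathbf{x}}^k - \mathbf{x}^* - \mathbf{A}_{I_0}^{\dagger} \mathbf{r}_{I_0}$, and your effective noise $\mathbf{s} = \mathbf{r}_{I_1} - \mathbf{A}_{I_1} \mathbf{A}_{I_0}^{\dagger} \mathbf{r}_{I_0}$ matches the paper's. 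All the individual steps check out: the invariance $\mathbf{A}_{I_0} \widehat{\mathbf{x}}^k = \widehat{\mathbf{b}}_{I_0}$ holds because Algorithm~\ref{alg:scrk_method} moves only along $\mathbf{P}\mathbf{a}_j$; the per-step identity has no cross term since the noise increment lies along $\mathbf{v}$; the contraction uses $\mathbf{p}^k \perp \Null(\mathbf{A}_{I_1}\mathbf{P}) = \Null(\mathbf{P})$ exactly as in Theorem~\ref{thm:scrk_convergence}; and the geometric series gives $\gamma_1$. Your route is more elementary --- it needs neither Lemma~\ref{lem:pseudoinverse_block_formula} nor Lemma~\ref{lem:noisy_solution_space_geometry} --- and yields the strictly sharper intermediate bound
\begin{equation*}
    \mathbb{E} \norm{\widehat{\mathbf{x}}^k - \mathbf{x}^*}^2
    \leq \left( 1 - \frac{\sigma_{\mathrm{min}}^+(\mathbf{A}_{I_1} \mathbf{P})^2}{\norm{\mathbf{A}_{I_1} \mathbf{P}}_F^2} \right)^k \norm{\mathbf{P}(\widehat{\mathbf{x}}^0 - \mathbf{x}^*)}^2 + \norm{\mathbf{A}_{I_0}^{\dagger} \mathbf{r}_{I_0}}^2 + \gamma_1 ,
\end{equation*}
from which the stated theorem follows by your two final relaxations. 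What the paper's heavier machinery buys is generality: the block pseudoinverse formula does not require the iterate to lie in the (noisy) solution space of $\mathbf{A}_{I_0}$ and extends verbatim to blocks $J$ with $|J| > 1$ (Remark~\ref{rmk:scrk_block_update_general}, the two-step decomposition and the connection to the two-subspace Kaczmarz method), whereas your argument is tied to the single-row update and the invariance of the $I_0$-residual --- which Algorithm~\ref{alg:scrk_method} does guarantee here, so nothing is missing for the theorem as stated.
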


\begin{remark}[Error horizon]
Note that $\gamma_0$ only depends on the noise in the measurements corresponding to the fixed block $I_0$. In particular, if $\mathbf{r}_{I_0} = 0$, then $\gamma_0 = 0$ and $\gamma_1$ only depends on $\norm{\mathbf{r}_{I_1}}^2$. On the other hand, if $\mathbf{r}_{I_1} = 0$, then $\gamma_1$ only depends on $\norm{\mathbf{A}_{I_1} \mathbf{A}_{I_0}^{\dagger} \mathbf{r}_{I_0}}^2 = \sum_{j \in I_1} |\mathbf{a}_j^{\tran} \mathbf{A}_{I_0}^{\dagger} \mathbf{r}_{I_0}|^2$. Note that $|\mathbf{a}_j^{\tran} \mathbf{A}_{I_0}^{\dagger} \mathbf{r}_{I_0}|$ corresponds to the angle between the row $\mathbf{a}_j$ and $\mathbf{A}_{I_0}^{\dagger} \mathbf{r}_{I_0}$; the vector $\mathbf{A}_{I_0}^{\dagger} \mathbf{r}_{I_0}$ accounts for how noise in the measurements corresponding to the fixed block $\mathbf{A}_{I_0}$ shifts the solution space (see Lemma~\ref{lem:noisy_solution_space_geometry}\ref{lem:noisy_solution_space_geometry_a}). Finally, if $\mathbf{r} = 0$, then $\gamma_0 = \gamma_1 = 0$ and we recover Theorem~\ref{thm:scrk_convergence}.
\end{remark}


\begin{remark}[Least squares]
Given a set of noisy measurements $\widehat{\mathbf{b}}$, our setup can easily be translated to the problem of solving the \emph{inconsistent} system of linear equations $\mathbf{A} \mathbf{x} \approx \widehat{\mathbf{b}}$ in a least squares sense. In this setting, by defining $\mathbf{x}^*$ to be the least squares solution (i.e.\ $\mathbf{x}^* \coloneqq \argmin_{\mathbf{x} \in \reals^n} \norm{\mathbf{A} \mathbf{x} - \widehat{\mathbf{b}}}^2 = \mathbf{A}^{\dagger} \widehat{\mathbf{b}}$), and setting $\mathbf{b} \coloneqq \mathbf{A} \mathbf{x}^*$ and $\mathbf{r} \coloneqq \widehat{\mathbf{b}} - \mathbf{A} \mathbf{x}^*$, Theorem~\ref{thm:noisy_scrk_convergence} can be applied to deduce that the SCRK iterates converge to the least squares solution up to the same error horizon.
\end{remark}

We develop some technical results before proving Theorem~\ref{thm:noisy_scrk_convergence}.
Note that with noisy measurements $\widehat{\mathbf{b}}$, the relationship $\widehat{\mathbf{x}}^k - \mathbf{x}^* \in \Null(\mathbf{A}_{I_0})$ does not necessarily hold anymore.
Thus, it will be more convenient to work directly with the block update~\eqref{eq:scrk_block_update} instead. First, we present a decomposition of the pseudoinverse $\mathbf{A}_{I \cup J}^{\dagger}$ in terms of $\mathbf{A}_{I}$ and $\mathbf{A}_J$.

\begin{lemma} \label{lem:pseudoinverse_block_formula}
Let $\mathbf{A} \in \reals^{m \times n}$ and $I, J \subseteq [m]$ be two disjoint subsets of row indices. If $\mathbf{A}_{I \cup J}$ has full row rank, then the pseudoinverse $\mathbf{A}_{I \cup J}^{\dagger}$ admits the block representation
\begin{equation} \label{eq:pseudoinverse_block_formula_1}
    \mathbf{A}_{I \cup J}^{\dagger} =
    \begin{pmatrix}
        \mathbf{A}_{I}^{\dagger} - (\mathbf{A}_J \mathbf{P})^{\dagger} \mathbf{A}_J \mathbf{A}_{I}^{\dagger}
        & \rvline &
        (\mathbf{A}_J \mathbf{P})^{\dagger}
    \end{pmatrix} ,
\end{equation}
where $\mathbf{P} = \mathbf{I} - \mathbf{A}_{I}^{\dagger} \mathbf{A}_{I}$ is the orthogonal projection operator onto $\Null(\mathbf{A}_{I})$.
\end{lemma}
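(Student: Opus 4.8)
The plan is to identify the right-hand side of~\eqref{eq:pseudoinverse_block_formula_1}, call it $\mathbf{B}$, with $\mathbf{A}_{I \cup J}^{\dagger}$ by exploiting the following characterization of the pseudoinverse of a full-row-rank matrix: if $\mathbf{M}$ has full row rank, then $\mathbf{M}^{\dagger}$ is the \emph{unique} right inverse whose columns lie in the row space, i.e.\ the unique $\mathbf{R}$ with $\mathbf{M}\mathbf{R} = \mathbf{I}$ and $\Range(\mathbf{R}) \subseteq \Range(\mathbf{M}^{\tran})$. This holds because $\mathbf{M}^{\dagger} = \mathbf{M}^{\tran}(\mathbf{M}\mathbf{M}^{\tran})^{-1}$ satisfies both conditions, and any $\mathbf{R} = \mathbf{M}^{\tran}\mathbf{C}$ with $\mathbf{M}\mathbf{R} = \mathbf{I}$ forces $\mathbf{C} = (\mathbf{M}\mathbf{M}^{\tran})^{-1}$. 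Thus the whole lemma reduces to two verifications for $\mathbf{B}$.

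First I would record the preliminary rank facts. Since $\mathbf{A}_{I \cup J}$ has full row rank, its rows are linearly independent, so the row submatrices $\mathbf{A}_I$ and $\mathbf{A}_J$ also have full row rank; in particular $\mathbf{A}_I \mathbf{A}_I^{\dagger} = \mathbf{I}$, which gives the identity $\mathbf{A}_I \mathbf{P} = \mathbf{A}_I - \mathbf{A}_I \mathbf{A}_I^{\dagger}\mathbf{A}_I = \mathbf{0}$. I would then show $\mathbf{A}_J \mathbf{P}$ has full row rank: if $\mathbf{y}^{\tran}\mathbf{A}_J \mathbf{P} = \mathbf{0}$ then $\mathbf{A}_J^{\tran}\mathbf{y} \in \Range(\mathbf{P})^{\perp} = \Range(\mathbf{A}_I^{\tran})$, producing a relation $\mathbf{A}_I^{\tran}\mathbf{w} - \mathbf{A}_J^{\tran}\mathbf{y} = \mathbf{0}$ among the (independent) rows of $\mathbf{A}_{I \cup J}$, forcing $\mathbf{y} = \mathbf{0}$; hence $(\mathbf{A}_J \mathbf{P})(\mathbf{A}_J \mathbf{P})^{\dagger} = \mathbf{I}$. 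Two consequences drive the rest. Since the columns of $(\mathbf{A}_J \mathbf{P})^{\dagger}$ lie in $\Range\big((\mathbf{A}_J \mathbf{P})^{\tran}\big) = \Range(\mathbf{P}\mathbf{A}_J^{\tran}) \subseteq \Range(\mathbf{P}) = \Null(\mathbf{A}_I)$, we get $\mathbf{A}_I (\mathbf{A}_J \mathbf{P})^{\dagger} = \mathbf{0}$; and splitting $\mathbf{A}_J = \mathbf{A}_J \mathbf{P} + \mathbf{A}_J \mathbf{A}_I^{\dagger}\mathbf{A}_I$ and applying both identities yields $\mathbf{A}_J (\mathbf{A}_J \mathbf{P})^{\dagger} = \mathbf{I} + \mathbf{A}_J \mathbf{A}_I^{\dagger}\,\mathbf{A}_I (\mathbf{A}_J \mathbf{P})^{\dagger} = \mathbf{I}$.

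With these in hand, the right-inverse property $\mathbf{A}_{I \cup J}\mathbf{B} = \mathbf{I}$ is a $2\times 2$ block computation: the top block row uses $\mathbf{A}_I \mathbf{A}_I^{\dagger} = \mathbf{I}$ and $\mathbf{A}_I (\mathbf{A}_J \mathbf{P})^{\dagger} = \mathbf{0}$ to produce $(\mathbf{I}, \mathbf{0})$, while the bottom block row uses $\mathbf{A}_J (\mathbf{A}_J \mathbf{P})^{\dagger} = \mathbf{I}$ to cancel $\mathbf{A}_J \mathbf{A}_I^{\dagger}$ in the left block and give $(\mathbf{0}, \mathbf{I})$. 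For the range condition I would check $\Range(\mathbf{B}) \subseteq \Range(\mathbf{A}_{I \cup J}^{\tran}) = \Range(\mathbf{A}_I^{\tran}) + \Range(\mathbf{A}_J^{\tran})$: the columns of $\mathbf{A}_I^{\dagger}$ lie in $\Range(\mathbf{A}_I^{\tran})$, and those of $(\mathbf{A}_J \mathbf{P})^{\dagger}$ lie in $\Range(\mathbf{P}\mathbf{A}_J^{\tran}) \subseteq \Range(\mathbf{A}_I^{\tran}) + \Range(\mathbf{A}_J^{\tran})$, so every column of both blocks of $\mathbf{B}$ lies in the combined row space. The characterization then gives $\mathbf{B} = \mathbf{A}_{I \cup J}^{\dagger}$.

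I expect the main obstacle to be the two facts about $\mathbf{A}_J \mathbf{P}$, namely that it has full row rank and that $\mathbf{A}_J (\mathbf{A}_J \mathbf{P})^{\dagger} = \mathbf{I}$; both hinge on the full-rank hypothesis on $\mathbf{A}_{I \cup J}$ and the orthogonality relation $\mathbf{A}_I \mathbf{P} = \mathbf{0}$. Once these are established, the block multiplication and the range inclusion are routine, so the characterization of the full-row-rank pseudoinverse does the real work of avoiding a direct verification of all four Moore–Penrose conditions.
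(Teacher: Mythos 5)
Your proof is correct, and it takes a genuinely different route from the paper's. The paper \emph{derives} the formula: it writes $\mathbf{A}_{I \cup J}^{\dagger} = \mathbf{A}_{I \cup J}^{\tran} \left( \mathbf{A}_{I \cup J} \mathbf{A}_{I \cup J}^{\tran} \right)^{-1}$, inverts the $2 \times 2$ block Gram matrix via the Schur complement of $\mathbf{A}_I \mathbf{A}_I^{\tran}$, identifies that Schur complement as $\mathbf{A}_J \mathbf{P} \mathbf{A}_J^{\tran}$, and then simplifies the two resulting blocks using the identity $\mathbf{P} \mathbf{A}_J^{\tran} (\mathbf{A}_J \mathbf{P} \mathbf{A}_J^{\tran})^{-1} = (\mathbf{A}_J \mathbf{P})^{\dagger}$. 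You instead \emph{verify} the stated formula against the uniqueness characterization of the full-row-rank pseudoinverse as the unique right inverse whose range lies in the row space; your argument for that characterization (write $\mathbf{R} = \mathbf{M}^{\tran} \mathbf{C}$ and solve $\mathbf{M}\mathbf{M}^{\tran}\mathbf{C} = \mathbf{I}$) is sound, and your two key computations --- $\mathbf{A}_I (\mathbf{A}_J \mathbf{P})^{\dagger} = \mathbf{0}$ from $\Range\bigl((\mathbf{A}_J\mathbf{P})^{\dagger}\bigr) \subseteq \Range(\mathbf{P}) = \Null(\mathbf{A}_I)$, and $\mathbf{A}_J (\mathbf{A}_J \mathbf{P})^{\dagger} = \mathbf{I}$ via the splitting $\mathbf{A}_J = \mathbf{A}_J \mathbf{P} + \mathbf{A}_J \mathbf{A}_I^{\dagger} \mathbf{A}_I$ --- check out, as does the block right-inverse computation and the range inclusion $\Range(\mathbf{B}) \subseteq \Range(\mathbf{A}_I^{\tran}) + \Range(\mathbf{A}_J^{\tran})$. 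Note that your full-row-rank argument for $\mathbf{A}_J \mathbf{P}$ coincides with the paper's (both reduce a dependency among the projected rows to a dependency among the rows of $\mathbf{A}_{I \cup J}$ via $\Null(\mathbf{P}) = \Range(\mathbf{A}_I^{\tran})$), so the approaches share their one nontrivial rank fact. What each buys: the paper's Schur-complement route is constructive, showing where the block expression comes from (useful if one did not already know the answer, and it exposes the Gram-matrix structure that recurs in block Kaczmarz analyses), whereas your verification is shorter and more elementary, replacing the block matrix inversion with two clean orthogonality identities --- but it only works because the target formula is given in advance.
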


\begin{proof}[Proof of Lemma~\ref{lem:pseudoinverse_block_formula}]
First, we record the key algebraic property that will be used repeatedly:
\begin{equation} \label{eq:full-rank-pinv}
    \mathbf{X}^{\dagger} = \mathbf{X}^{\tran} (\mathbf{X}\mathbf{X}^{\tran})^{-1}
    \quad \text{for } \mathbf{X} = \mathbf{A}_{I \cup J} ,\, \mathbf{A}_{I} ,\, \text{or } \mathbf{A}_J \mathbf{P} .
\end{equation}
This follows since all three matrices have full row rank: $\mathbf{A}_{I \cup J}$ by assumption, $\mathbf{A}_{I}$ as its row subset, and $\mathbf{A}_J \mathbf{P}$ from the observation that if its rows were linearly dependent then there would exist some nonzero $\boldsymbol{\alpha} \in \reals^{|J|}$ such that $\sum_{j \in J} \alpha_j \mathbf{a}_j^{\tran} \mathbf{P} = \mathbf{0}$, which would imply that $\sum_{j \in J} \alpha_j \mathbf{a}_j \in \Null(\mathbf{P}) = \Range(\mathbf{A}_I^{\tran})$ and thus contradict the assumption that $\mathbf{A}_{I \cup J}$ has full row rank. We have
\[
    \mathbf{A}_{I \cup J} \mathbf{A}_{I \cup J}^{\tran}
    = \left( \begin{array}{ccc}
        \mathbf{A}_{I} \mathbf{A}_{I}^{\tran} & \rvline & \mathbf{A}_{I} \mathbf{A}_J^{\tran} \Bstrut \\
        \hline
        (\mathbf{A}_{I} \mathbf{A}_J^{\tran})^{\tran} & \rvline & \mathbf{A}_J \mathbf{A}_J^{\tran} \Tstrut \\
    \end{array} \right)
    \eqqcolon \left( \begin{array}{ccc}
        \mathbf{A}_{II} & \rvline & \mathbf{A}_{IJ} \Bstrut \\
        \hline
        \mathbf{A}_{IJ}^{\tran} & \rvline & \mathbf{A}_{JJ} \Tstrut \\
    \end{array} \right) .
\]
Since $\mathbf{A}_I$ has full row rank, $\mathbf{A}_{II}$ is invertible and
\begin{equation} \label{eq:schur}
\begin{aligned}
    \mathbf{A}_{I \cup J}^{\dagger} &\overset{\eqref{eq:full-rank-pinv}}{=} \mathbf{A}_{I \cup J}^{\tran} \left( \mathbf{A}_{I \cup J} \mathbf{A}_{I \cup J}^{\tran} \right)^{-1} \\
    &\overset{\phantom{\eqref{eq:full-rank-pinv}}}{=}
    \begin{pmatrix}
        \mathbf{A}_{I}^{\tran} & \rvline & \mathbf{A}_J^{\tran}
    \end{pmatrix}
    \left( \begin{array}{ccc}
        \mathbf{A}_{II}^{-1} + \mathbf{A}_{II}^{-1}\mathbf{A}_{IJ}\mathbf{R}^{-1}\mathbf{A}_{IJ}^{\tran} \mathbf{A}_{II}^{-1} & \rvline & -\mathbf{A}_{II}^{-1} \mathbf{A}_{IJ} \mathbf{R}^{-1} \Bstrut \\
        \hline
        - \mathbf{R}^{-1}\mathbf{A}_{IJ}^{\tran} \mathbf{A}_{II}^{-1} & \rvline & \mathbf{R}^{-1} \Tstrut \\
    \end{array} \right) ,
\end{aligned}
\end{equation}
where $\mathbf{R} \coloneqq \mathbf{A}_{JJ} - \mathbf{A}_{IJ}^{\tran} \mathbf{A}_{II}^{-1} \mathbf{A}_{IJ}$ is the Schur complement~\cite{HornJohnson2012} of the block $\mathbf{A}_{II}$. Recall that $\mathbf{P} = \mathbf{I} - \mathbf{A}_{I}^{\dagger} \mathbf{A}_{I}$, and so
\begin{equation*}
    \mathbf{R}
    = \mathbf{A}_J \mathbf{A}_J^{\tran} - \mathbf{A}_J \mathbf{A}_{I}^{\tran} (\mathbf{A}_I \mathbf{A}_I^{\tran})^{-1} \mathbf{A}_{I} \mathbf{A}_{J}^{\tran}
    \overset{\eqref{eq:full-rank-pinv}}{=} \mathbf{A}_J [\mathbf{I} - \mathbf{A}_{I}^{\dagger} \mathbf{A}_{I}] \mathbf{A}_J^{\tran}
    = \mathbf{A}_J \mathbf{P} \mathbf{A}_J^{\tran} ,
\end{equation*}
which implies that
\begin{equation}\label{eq:pseudoinverse_block_formula_pf2}
   \mathbf{P}\mathbf{A}_J^{\tran}\mathbf{R}^{-1} = \mathbf{P}\mathbf{A}_J^{\tran}(\mathbf{A}_J \mathbf{P}\mathbf{A}_J^{\tran})^{-1} = (\mathbf{A}_J \mathbf{P})^{\tran} ((\mathbf{A}_J \mathbf{P})(\mathbf{A}_J \mathbf{P})^{\tran})^{-1} \overset{\eqref{eq:full-rank-pinv}}{=}  (\mathbf{A}_J \mathbf{P})^{\dagger} .
\end{equation}
Next we compute expressions for the two blocks of $\mathbf{A}_{I \cup J}^\dagger$ in~\eqref{eq:schur}. The first block, $\mathbf{A}_{I}^{\tran}[\mathbf{A}_{II}^{-1} + \mathbf{A}_{II}^{-1}\mathbf{A}_{IJ}\mathbf{R}^{-1}\mathbf{A}_{IJ}^{\tran} \mathbf{A}_{II}^{-1} ] + \mathbf{A}_{J}^{\tran}[- \mathbf{R}^{-1}\mathbf{A}_{IJ}^{\tran} \mathbf{A}_{II}^{-1}]$, simplifies to
\begin{align*}
    \mathbf{A}_{I}^{\dagger} - [-\mathbf{A}_{I}^{\dagger} \mathbf{A}_{I} + \mathbf{I}] \mathbf{A}_J^{\tran} \mathbf{R}^{-1} \mathbf{A}_J \mathbf{A}_{I}^{\dagger}
    &= \mathbf{A}_{I}^{\dagger} - \mathbf{P}\mathbf{A}_J^{\tran} \mathbf{R}^{-1}\mathbf{A}_J \mathbf{A}_{I}^{\dagger}
    \overset{\eqref{eq:pseudoinverse_block_formula_pf2}}{=} \mathbf{A}_{I}^{\dagger} - (\mathbf{A}_J \mathbf{P})^{\dagger} \mathbf{A}_J \mathbf{A}_{I}^{\dagger} .
\end{align*}
The second block, $\mathbf{A}_{I}^{\tran}[-\mathbf{A}_{II}^{-1} \mathbf{A}_{IJ} \mathbf{R}^{-1}] + \mathbf{A}_{J}^{\tran}[ \mathbf{R}^{-1}]$, simplifies to
\begin{align*}
    [-\mathbf{A}_{I}^{\dagger} \mathbf{A}_{I} + \mathbf{I}] \mathbf{A}_J^{\tran} \mathbf{R}^{-1}
    = \mathbf{P} \mathbf{A}_J^{\tran} \mathbf{R}^{-1} \overset{\eqref{eq:pseudoinverse_block_formula_pf2}}{=} (\mathbf{A}_J \mathbf{P})^{\dagger} . \label{eq:pseudoinverse_block_formula_pf4}
\end{align*}
Combining the two preceding displayed equations completes the proof.
\end{proof}

Next, we describe how the noise affects the geometry of the solution spaces.

\begin{lemma} \label{lem:noisy_solution_space_geometry}
Denote the true and noisy solution spaces associated with $I \subset [m]$ by
\begin{equation}
    \mathcal{H}_{I} = \{ \mathbf{x} \in \reals^n : \mathbf{A}_{I} \mathbf{x} = \mathbf{b}_{I} \} \quad \text{ and } \quad
    \widehat{\mathcal{H}}_{I} = \{ \mathbf{x} \in \reals^n : \mathbf{A}_{I} \mathbf{x} = \mathbf{b}_{I} + \mathbf{r}_{I} \}
\end{equation}
respectively. If $\mathbf{A}_I$ has full row rank, then $\mathcal{H}_{I}$ and $\widehat{\mathcal{H}}_{I}$ satisfy the following:
\begin{enumerate}[label=\textnormal{(\alph*)}, ref=\textnormal{\alph*}, leftmargin=*]
    \item \label{lem:noisy_solution_space_geometry_a} $\widehat{\mathcal{H}}_{I} = \mathcal{H}_{I} + \mathbf{A}_{I}^{\dagger} \mathbf{r}_{I}$.

    \item \label{lem:noisy_solution_space_geometry_b} $\widehat{\mathcal{H}}_{I} - \mathbf{x}^* = \Null(\mathbf{A}_{I}) + \mathbf{A}_{I}^{\dagger} \mathbf{r}_{I}$.

    \item \label{lem:noisy_solution_space_geometry_c} The vector $\mathbf{A}_{I}^{\dagger} \mathbf{r}_{I}$ is orthogonal to $\Null(\mathbf{A}_{I})$.
\end{enumerate}
\end{lemma}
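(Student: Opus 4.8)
The plan is to prove part~(\ref{lem:noisy_solution_space_geometry_a}) first by a direct double inclusion, obtain part~(\ref{lem:noisy_solution_space_geometry_b}) as an immediate corollary, and finally verify part~(\ref{lem:noisy_solution_space_geometry_c}) from the range--nullspace orthogonality of the pseudoinverse. Everything rests on a single observation that the full-row-rank hypothesis unlocks: since $\mathbf{A}_I$ has full row rank, equation~\eqref{eq:full-rank-pinv} gives $\mathbf{A}_I^{\dagger} = \mathbf{A}_I^{\tran}(\mathbf{A}_I \mathbf{A}_I^{\tran})^{-1}$, whence $\mathbf{A}_I \mathbf{A}_I^{\dagger} = \mathbf{I}$. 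This means $\mathbf{A}_I^{\dagger}\mathbf{r}_I$ is a genuine particular solution of the shifted system $\mathbf{A}_I \mathbf{x} = \mathbf{r}_I$, which is exactly what is needed to relate $\widehat{\mathcal{H}}_I$ to $\mathcal{H}_I$.

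For part~(\ref{lem:noisy_solution_space_geometry_a}), I would argue both inclusions. Given $\mathbf{x} \in \widehat{\mathcal{H}}_I$, the vector $\mathbf{y} \coloneqq \mathbf{x} - \mathbf{A}_I^{\dagger}\mathbf{r}_I$ satisfies $\mathbf{A}_I \mathbf{y} = (\mathbf{b}_I + \mathbf{r}_I) - \mathbf{A}_I\mathbf{A}_I^{\dagger}\mathbf{r}_I = \mathbf{b}_I$ using $\mathbf{A}_I\mathbf{A}_I^{\dagger} = \mathbf{I}$, so $\mathbf{y} \in \mathcal{H}_I$ and hence $\mathbf{x} \in \mathcal{H}_I + \mathbf{A}_I^{\dagger}\mathbf{r}_I$. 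Conversely, any $\mathbf{x} = \mathbf{y} + \mathbf{A}_I^{\dagger}\mathbf{r}_I$ with $\mathbf{y} \in \mathcal{H}_I$ obeys $\mathbf{A}_I\mathbf{x} = \mathbf{b}_I + \mathbf{r}_I$, placing it in $\widehat{\mathcal{H}}_I$.

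Part~(\ref{lem:noisy_solution_space_geometry_b}) then follows by subtracting $\mathbf{x}^*$ from the identity in~(\ref{lem:noisy_solution_space_geometry_a}): since $\mathbf{x}^*$ solves $\mathbf{A}\mathbf{x} = \mathbf{b}$ it in particular lies in $\mathcal{H}_I$, so $\mathcal{H}_I = \mathbf{x}^* + \Null(\mathbf{A}_I)$ (the usual description of a consistent affine solution set as a coset of the nullspace), giving $\widehat{\mathcal{H}}_I - \mathbf{x}^* = \Null(\mathbf{A}_I) + \mathbf{A}_I^{\dagger}\mathbf{r}_I$. For part~(\ref{lem:noisy_solution_space_geometry_c}), the same formula $\mathbf{A}_I^{\dagger}\mathbf{r}_I = \mathbf{A}_I^{\tran}(\mathbf{A}_I\mathbf{A}_I^{\tran})^{-1}\mathbf{r}_I$ shows $\mathbf{A}_I^{\dagger}\mathbf{r}_I \in \Range(\mathbf{A}_I^{\tran})$, and since $\Range(\mathbf{A}_I^{\tran}) = \Null(\mathbf{A}_I)^{\perp}$, orthogonality is immediate.

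I do not expect a genuine obstacle here, as the lemma is routine once the particular-solution identity is in place; the only point demanding care is to invoke full row rank precisely where $\mathbf{A}_I\mathbf{A}_I^{\dagger} = \mathbf{I}$ is used, since for a general $\mathbf{A}_I$ this fails and $\mathbf{A}_I^{\dagger}\mathbf{r}_I$ would solve $\mathbf{A}_I\mathbf{x} = \mathbf{r}_I$ only in a least-squares sense, breaking the coset identification.
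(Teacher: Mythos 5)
Your proposal is correct and follows essentially the same route as the paper's proof: both parts~(a) and~(b) use the identity $\mathbf{A}_I \mathbf{A}_I^{\dagger} = \mathbf{I}$ (from full row rank) to establish the coset identity by double inclusion and then subtract $\mathbf{x}^*$, and part~(c) rests on $\Range(\mathbf{A}_I^{\dagger}) = \Range(\mathbf{A}_I^{\tran}) = \Null(\mathbf{A}_I)^{\perp}$, which you make explicit via the formula $\mathbf{A}_I^{\dagger} = \mathbf{A}_I^{\tran}(\mathbf{A}_I\mathbf{A}_I^{\tran})^{-1}$. No gaps; your closing remark correctly identifies where the full-row-rank hypothesis is essential.
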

\begin{proof}
(\ref{lem:noisy_solution_space_geometry_a}): Since $\mathbf{A}_{I}$ has full row rank, $\mathbf{A}_{I} \mathbf{A}_{I}^{\dagger} = \mathbf{I}$. Therefore, for any $\mathbf{x} \in \mathcal{H}_{I}$, we have $\mathbf{A}_{I} (\mathbf{x} + \mathbf{A}_{I}^{\dagger} \mathbf{r}_{I}) = \mathbf{b}_{I} + \mathbf{r}_{I}$ and so $\mathbf{x} + \mathbf{A}_{I}^{\dagger} \mathbf{r}_{I} \in \widehat{\mathcal{H}}_{I}$. Conversely, for any $\widehat{\mathbf{x}} \in \widehat{\mathcal{H}}_{I}$, we have $\mathbf{A}_{I} (\widehat{\mathbf{x}} - \mathbf{A}_{I}^{\dagger} \mathbf{r}_{I}) = \mathbf{b}_{I}$. Thus, $(\widehat{\mathbf{x}} - \mathbf{A}_{I}^{\dagger} \mathbf{r}_{I}) \in \mathcal{H}_{I}$, and so
\[
    \widehat{\mathbf{x}} = (\widehat{\mathbf{x}} - \mathbf{A}_{I}^{\dagger} \mathbf{r}_{I}) + \mathbf{A}_{I}^{\dagger} \mathbf{r}_{I} \in \mathcal{H}_{I} + \mathbf{A}_{I}^{\dagger} \mathbf{r}_{I} .
\]
(\ref{lem:noisy_solution_space_geometry_b}): Since $\mathbf{x}^* \in \mathcal{H}_{I}$, we have $\mathcal{H}_{I} - \mathbf{x}^* = \Null(\mathbf{A}_{I})$. Together with part~(\ref{lem:noisy_solution_space_geometry_a}), this implies~(\ref{lem:noisy_solution_space_geometry_b}). Finally, (\ref{lem:noisy_solution_space_geometry_c}) follows from the fact $\Range(\mathbf{A}_{I}^{\dagger}) = \Range(\mathbf{A}_{I}^{\tran}) = \Null(\mathbf{A}_{I})^{\perp}$.
\end{proof}

We will now prove Theorem~\ref{thm:noisy_scrk_convergence} by using the expression for $\mathbf{A}_{I_0 \cup \{ j \}}^{\dagger}$ given in Lemma~\ref{lem:pseudoinverse_block_formula}, as well as the geometry of the shifted solution spaces described by Lemma~\ref{lem:noisy_solution_space_geometry}.

\begin{proof}[Proof of Theorem~\ref{thm:noisy_scrk_convergence}]
Consider the $k$\textsuperscript{th} iterate $\widehat{\mathbf{x}}^k$. Suppose that $\mathbf{a}_j$ is sampled in the next iteration (with $\mathbf{P} \mathbf{a}_j \ne \mathbf{0}$ and hence $\mathbf{A}_{I_0 \cup \{ j \}}$ has full row rank). Then one step of the SCRK algorithm with noisy measurements corresponds to the projection of $\widehat{\mathbf{x}}^k$ onto the noisy solution space $\widehat{\mathcal{H}}_{I_0 \cup \{ j \}}$, namely,
\[
    \widehat{\mathbf{x}}^{k+1} = \widehat{\mathbf{x}}^k + \mathbf{A}_{I_0 \cup \{ j \}}^{\dagger} (\mathbf{b}_{I_0 \cup \{ j \}} + \mathbf{r}_{I_0 \cup \{ j \}} - \mathbf{A}_{I_0 \cup \{ j \}} \widehat{\mathbf{x}}^k).
\]
We will compare $\widehat{\mathbf{x}}^{k+1}$ with the projection of $\widehat{\mathbf{x}}^k$ onto the true solution space $\mathcal{H}_{I_0 \cup \{ j \}}$, denoted by
\[
    \mathbf{x}^{k+1} \coloneqq \widehat{\mathbf{x}}^k + \mathbf{A}_{I_0 \cup \{ j \}}^{\dagger} (\mathbf{b}_{I_0 \cup \{ j \}} - \mathbf{A}_{I_0 \cup \{ j \}} \widehat{\mathbf{x}}^k).
\]

\textbf{Step 1. Exact computations.}
Note that
\[
    \widehat{\mathbf{x}}^{k+1} - \mathbf{x}^* = (\mathbf{x}^{k+1} - \mathbf{x}^*) + \mathbf{A}_{I_0 \cup \{ j \}}^{\dagger} \mathbf{r}_{I_0 \cup \{ j \}} ,
\]
and $\mathbf{A}_{I_0 \cup \{ j \}}^{\dagger} \mathbf{r}_{I_0 \cup \{ j \}} \perp (\mathbf{x}^{k+1} - \mathbf{x}^*) \in \Null(\mathbf{A}_{I_0 \cup \{ j \}})$ by  Lemma~\ref{lem:noisy_solution_space_geometry}\ref{lem:noisy_solution_space_geometry_c}. By using Pythagoras' theorem twice (and orthogonality of the true Kaczmarz projections), we have
\begin{align} \label{eq:noisy_scrk_exact_pf1}
    \norm{\widehat{\mathbf{x}}^{k+1} - \mathbf{x}^*}^2 &= \norm{\mathbf{x}^{k+1} - \mathbf{x}^*}^2 + \norm{\mathbf{A}_{I_0 \cup \{ j \}}^{\dagger} \mathbf{r}_{I_0 \cup \{ j \}}}^2 \nonumber\\
    &= \norm{\widehat{\mathbf{x}}^k - \mathbf{x}^*}^2 - \norm{\mathbf{A}_{I_0 \cup \{ j \}}^{\dagger} \mathbf{A}_{I_0 \cup \{ j \}} (\widehat{\mathbf{x}}^k - \mathbf{x}^*)}^2  +  \norm{\mathbf{A}_{I_0 \cup \{ j \}}^{\dagger} \mathbf{r}_{I_0 \cup \{ j \}}}^2.
\end{align}
By using Lemma~\ref{lem:pseudoinverse_block_formula} with $I = I_0$ and $J = \{j\}$, we can simplify the last two terms: firstly,
\begin{align}
    \mathbf{A}_{I_0 \cup \{ j \}}^{\dagger} \mathbf{r}_{I_0 \cup \{ j \}}
    &= \begin{pmatrix}
        \mathbf{A}_{I_0}^{\dagger} - \frac{\mathbf{P} \mathbf{a}_j \mathbf{a}_j^{\tran} \mathbf{A}_{I_0}^{\dagger}}{\norm{\mathbf{P} \mathbf{a}_j}^2} & \rvline & \frac{\mathbf{P} \mathbf{a}_j}{\norm{\mathbf{P} \mathbf{a}_j}^2}
    \end{pmatrix}
    \left( \begin{array}{c}
        \mathbf{r}_{I_0} \\
        \hline
        r_j \\
    \end{array} \right) \nonumber \\
    &= \mathbf{A}_{I_0}^{\dagger} \mathbf{r}_{I_0} + (r_j - \mathbf{a}_j^{\tran} \mathbf{A}_{I_0}^{\dagger} \mathbf{r}_{I_0}) \cdot \frac{\mathbf{P} \mathbf{a}_j}{\norm{\mathbf{P} \mathbf{a}_j}^2} . \label{eq:noisy_scrk_exact_pf3}
\end{align}
Next, since $\widehat{\mathbf{x}}^k - \mathbf{x}^* - \mathbf{A}_{I_0}^{\dagger} \mathbf{r}_{I_0} \in \Null(\mathbf{A}_{I_0})$ (from Lemma~\ref{lem:noisy_solution_space_geometry}\ref{lem:noisy_solution_space_geometry_b}) is a fixed point for $\mathbf{P}$,
\begin{align}\label{eq:noisy_scrk_exact_pf2}
    \mathbf{A}_{I_0 \cup \{ j \}}^{\dagger} \mathbf{A}_{I_0 \cup \{ j \}} (\widehat{\mathbf{x}}^k - \mathbf{x}^*)
    &= \begin{pmatrix}
        \mathbf{A}_{I_0}^{\dagger} - \frac{\mathbf{P} \mathbf{a}_j \mathbf{a}_j^{\tran} \mathbf{A}_{I_0}^{\dagger}}{\norm{\mathbf{P} \mathbf{a}_j}^2} & \rvline & \frac{\mathbf{P} \mathbf{a}_j}{\norm{\mathbf{P} \mathbf{a}_j}^2}
    \end{pmatrix}
    \left( \begin{array}{c}
        \mathbf{r}_{I_0} \Bstrut \\
        \hline
        \mathbf{a}_j^{\tran} (\widehat{\mathbf{x}}^k - \mathbf{x}^*) \Tstrut \\
    \end{array} \right) \nonumber\\
    &= \mathbf{A}_{I_0}^{\dagger} \mathbf{r}_{I_0} + \frac{(\mathbf{P} \mathbf{a}_j)^{\tran} (\widehat{\mathbf{x}}^k - \mathbf{x}^* - \mathbf{A}_{I_0}^{\dagger} \mathbf{r}_{I_0})}{\norm{\mathbf{P} \mathbf{a}_j}} \cdot \frac{\mathbf{P} \mathbf{a}_j}{\norm{\mathbf{P} \mathbf{a}_j}} ,
\end{align}
Furthermore, by Lemma~\ref{lem:noisy_solution_space_geometry}\ref{lem:noisy_solution_space_geometry_c}, $\mathbf{A}_{I_0}^{\dagger} \mathbf{r}_{I_0} \perp \mathbf{P}\mathbf{a}_j \in \Null(\mathbf{A}_{I_0})$, which implies that the two summands in both~\eqref{eq:noisy_scrk_exact_pf3} and~\eqref{eq:noisy_scrk_exact_pf2} are orthogonal. Hence, we can further expand~\eqref{eq:noisy_scrk_exact_pf1} to show that $\norm{\widehat{\mathbf{x}}^{k+1} - \mathbf{x}^*}^2$ is equal to
\begin{align*}
    \norm{\widehat{\mathbf{x}}^k - \mathbf{x}^*}^2 - \norm{\mathbf{A}_{I_0}^{\dagger} \mathbf{r}_{I_0}}^2  - \left| \frac{(\mathbf{P} \mathbf{a}_j)^{\tran} (\widehat{\mathbf{x}}^k - \mathbf{x}^* - \mathbf{A}_{I_0}^{\dagger} \mathbf{r}_{I_0})}{\norm{\mathbf{P} \mathbf{a}_j}} \right|^2 + \norm{\mathbf{A}_{I_0}^{\dagger} \mathbf{r}_{I_0}}^2 + \frac{|r_j - \mathbf{a}_j^{\tran} \mathbf{A}_{I_0}^{\dagger} \mathbf{r}_{I_0}|^2}{\norm{\mathbf{P} \mathbf{a}_j}^2} .
\end{align*}
By cancelling identical terms and taking the expectation, conditional on all the choices of the algorithm up to the $k$\textsuperscript{th} iteration (similar to the proof of Theorem~\ref{thm:scrk_convergence}), we obtain
\begin{equation} \label{eq:noisy_scrk_convergence_pf1}
    \mathbb{E}_k \norm{\widehat{\mathbf{x}}^{k+1} - \mathbf{x}^*}^2
    = \norm{\widehat{\mathbf{x}}^k - \mathbf{x}^*}^2
    - \frac{\norm{\mathbf{A}_{I_1} \mathbf{P} (\widehat{\mathbf{x}}^k - \mathbf{x}^* - \mathbf{A}_{I_0}^{\dagger} \mathbf{r}_{I_0})}^2}{\norm{\mathbf{A}_{I_1} \mathbf{P}}_F^2}
    + \frac{\norm{\mathbf{r}_{I_1} - \mathbf{A}_{I_1} \mathbf{A}_{I_0}^{\dagger} \mathbf{r}_{I_0}}^2}{\norm{\mathbf{A}_{I_1} \mathbf{P}}_F^2} .
\end{equation}

\textbf{Step 2. Spectral bounds.}
Recall that $\Null(\mathbf{A}_{I_1}\mathbf{P}) = \Null(\mathbf{P})$ from the proof of Theorem~\ref{thm:scrk_convergence}. Since $\widehat{\mathbf{x}}^k - \mathbf{x}^* - \mathbf{A}_{I_0}^{\dagger} \mathbf{r}_{I_0} \in \Null(\mathbf{A}_{I_0})$, which is orthogonal to $\Null(\mathbf{P}) = \Null(\mathbf{A}_{I_1}\mathbf{P})$, arguing as in~\eqref{eq:scrk_convergence_pf1} shows that
\begin{align*}
    \norm{\mathbf{A}_{I_1} \mathbf{P} (\widehat{\mathbf{x}}^k - \mathbf{x}^* - \mathbf{A}_{I_0}^{\dagger} \mathbf{r}_{I_0})}^2
    &\geq \sigma_{\mathrm{min}}^+(\mathbf{A}_{I_1} \mathbf{P})^2 \cdot \norm{\widehat{\mathbf{x}}^k - \mathbf{x}^* - \mathbf{A}_{I_0}^{\dagger} \mathbf{r}_{I_0}}^2 \\
\end{align*}
By expanding the square, $\norm{\widehat{\mathbf{x}}^k - \mathbf{x}^* - \mathbf{A}_{I_0}^{\dagger} \mathbf{r}_{I_0}}^2$ is equal to $\norm{\widehat{\mathbf{x}}^k - \mathbf{x}^*}^2 + \norm{\mathbf{A}_{I_0}^{\dagger} \mathbf{r}_{I_0}}^2 - 2 (\widehat{\mathbf{x}}^k - \mathbf{x}^*)^{\tran} \mathbf{A}_{I_0}^{\dagger} \mathbf{r}_{I_0}$.
Since $\mathbf{A}_{I_0}^{\dagger} = \mathbf{A}_{I_0}^{\tran} (\mathbf{A}_{I_0}\mathbf{A}_{I_0}^{\tran})^{-1}$ and $\mathbf{A}_{I_0} (\widehat{\mathbf{x}}^k - \mathbf{x}^*) = \mathbf{r}_{I_0}$,
\[
    (\widehat{\mathbf{x}}^k - \mathbf{x}^*)^{\tran} \mathbf{A}_{I_0}^{\dagger} \mathbf{r}_{I_0} = \mathbf{r}_{I_0}^{\tran} (\mathbf{A}_{I_0}\mathbf{A}_{I_0}^{\tran})^{-1} \mathbf{r}_{I_0} \leq \frac{\norm{\mathbf{r}_{I_0}}^2}{\sigma_{\mathrm{min}}(\mathbf{A}_{I_0})^2} .
\]
Hence, we can bound the second term of~\eqref{eq:noisy_scrk_convergence_pf1} from below by
\[
    \frac{\norm{\mathbf{A}_{I_1} \mathbf{P} (\widehat{\mathbf{x}}^k - \mathbf{x}^* - \mathbf{A}_{I_0}^{\dagger} \mathbf{r}_{I_0})}^2}{\norm{\mathbf{A}_{I_1} \mathbf{P}}_F^2}
    \geq \frac{\sigma_{\mathrm{min}}^+(\mathbf{A}_{I_1} \mathbf{P})^2}{\norm{\mathbf{A}_{I_1} \mathbf{P}}_F^2} \left( \norm{\widehat{\mathbf{x}}^k - \mathbf{x}^*}^2
    + \norm{\mathbf{A}_{I_0}^{\dagger} \mathbf{r}_{I_0}}^2 - \frac{2 \norm{\mathbf{r}_{I_0}}^2}{\sigma_{\mathrm{min}}(\mathbf{A}_{I_0})^2} \right) .
\]
By instating the definitions of $\gamma_0$ and $\gamma_1$, we have shown that
\begin{equation} \label{eq:noisy_scrk_convergence_pf2}
    \mathbb{E}_k \norm{\widehat{\mathbf{x}}^{k+1} - \mathbf{x}^*}^2
    \leq \left( 1 - \frac{\sigma_{\mathrm{min}}^+(\mathbf{A}_{I_1} \mathbf{P})^2}{\norm{\mathbf{A}_{I_1} \mathbf{P}}_F^2} \right) \cdot \norm{\widehat{\mathbf{x}}^k - \mathbf{x}^*}^2
    + \frac{\sigma_{\mathrm{min}}^+(\mathbf{A}_{I_1} \mathbf{P})^2}{\norm{\mathbf{A}_{I_1} \mathbf{P}}_F^2}
    \left( \gamma_0 + \gamma_1 \right) .
\end{equation}
By iterating~\eqref{eq:noisy_scrk_convergence_pf2}, we deduce that $\mathbb{E} \norm{\widehat{\mathbf{x}}^{k} - \mathbf{x}^*}^2$ is upper bounded by
\begin{align*}
    \left( 1 - \frac{\sigma_{\mathrm{min}}^+(\mathbf{A}_{I_1} \mathbf{P})^2}{\norm{\mathbf{A_{I_1} \mathbf{P}}}_F^2} \right)^k \cdot \norm{\widehat{\mathbf{x}}^0 - \mathbf{x}^*}^2
    + \sum_{i=0}^{k-1} \left( 1 - \frac{\sigma_{\mathrm{min}}^+(\mathbf{A}_{I_1} \mathbf{P})^2}{\norm{\mathbf{A}_{I_1} \mathbf{P}}_F^2} \right)^i \cdot \frac{\sigma_{\mathrm{min}}^+(\mathbf{A}_{I_1} \mathbf{P})^2}{\norm{\mathbf{A}_{I_1} \mathbf{P}}_F^2} (\gamma_1 + \gamma_2) .
\end{align*}
We conclude by bounding the geometric series by $\norm{\mathbf{A}_{I_1} \mathbf{P}}_F^2 / \sigma_{\mathrm{min}}^+(\mathbf{A}_{I_1} \mathbf{P})^2$.
\end{proof}

\begin{remark} \label{rmk:scrk_block_update_general}
A natural generalization of the SCRK update~\eqref{eq:scrk_block_update} is to project onto the solution space $\mathbf{A}_{I_0 \cup J} \mathbf{x} = \mathbf{b}_{I_0 \cup J}$, where $J \subseteq [m] \setminus I_0$ is a block of row indices disjoint from $I_0$ with $|J| \geq 1$:
\begin{equation} \label{eq:scrk_block_update_general}
    \mathbf{x}^{k+1} = \mathbf{x}^k + \mathbf{A}_{I_0 \cup J}^{\dagger} (\mathbf{b}_{I_0 \cup J} - \mathbf{A}_{I_0 \cup J} \mathbf{x}^k) .
\end{equation}
Assuming that $\mathbf{A}_{I_0 \cup J}$ has full row rank, Lemma~\ref{lem:pseudoinverse_block_formula} implies that~\eqref{eq:scrk_block_update_general} can be computed by the following two-step procedure (which does not require $\mathbf{x}^k$ to satisfy $\mathbf{A}_{I_0} \mathbf{x}^k = \mathbf{b}_{I_0}$):
\begin{enumerate}[label=(\arabic*), leftmargin=*]
    \item Project $\mathbf{x}^k$ onto the solution space $\mathbf{A}_{I_0} \mathbf{x} = \mathbf{b}_{I_0}$ to obtain $\mathbf{y}^k$:
    \[
        \mathbf{y}^k = \mathbf{x}^k + \mathbf{A}_{I_0}^{\dagger} (\mathbf{b}_{I_0} - \mathbf{A}_{I_0} \mathbf{x}^k) .
    \]

    \item Compute the new measurements $\boldsymbol{\beta}_J \coloneqq \mathbf{b}_J - \mathbf{A}_J \mathbf{A}_{I_0}^{\dagger} \mathbf{b}_{I_0} \in \reals^{|J|}$, then project $\mathbf{y}^k$ onto the solution space $\mathbf{A}_{J} \mathbf{x} = \mathbf{b}_{J}$ whilst remaining in the solution space of $\mathbf{A}_{I_0} \mathbf{x} = \mathbf{b}_{I_0}$ for $\mathbf{x}^{k+1}$:
    \[
        \mathbf{x}^{k+1} = \mathbf{y}^k + (\mathbf{A}_J \mathbf{P})^{\dagger} (\boldsymbol{\beta}_J - (\mathbf{A}_J \mathbf{P}) \mathbf{y}^k) .
    \]
\end{enumerate}
In particular, by restricting to a single row $J = \{ j \}$ and imposing the condition $\mathbf{A}_{I_0} \mathbf{x}^k = \mathbf{b}_{I_0}$, we recover the simplified update formula~\eqref{eq:scrk_update_main}, which provides an alternative algebraic proof of Lemma~\ref{lem:scrk_update_formula}.

By further restricting to the special case $I = \{ i \}$, the two-step procedure above reduces to an update of the two-subspace Kaczmarz method of~\cite{NeedellWard2013}. Thus, the SCRK method can be seen as a partial generalization of the two-subspace Kaczmarz method, except that the subset $I_0$ is fixed throughout the iterations to exploit specific features of the block $\mathbf{A}_{I_0}$, and similar results concerning coherence with respect to more general subsets of equations can be obtained (see Remark~\ref{rem:coherence_remark}).

Finally, while all the convergence results in this paper are stated for the case $|J| = 1$, we believe that similar techniques can be extended to the case of $|J| > 1$.
\end{remark}

\subsection{Exploiting structure with the SCRK method} \label{subsec:scrk_structure_discussion}

In this section, we discuss how the SCRK method can exploit approximately low-rank structure and geometric properties of the data matrix $\mathbf{A}$ to accelerate convergence. For simplicity, we will restrict our attention to the noiseless case. Our goal is to study the per-iteration convergence rate (i.e.\ with $k = 1$). First, note that the SCRK rate~\eqref{eq:scrk_convergence_1} is as good as the RK rate~\eqref{eq:rk_convergence_bound}. Indeed,
\begin{equation} \label{eq:restricted_singval_ineq}
    \sigma_{\mathrm{min}}^+(\mathbf{A}_{I_1} \mathbf{P})
    = \min_{\substack{\mathbf{z} \in \Null(\mathbf{A}_{I_0}) \\ \norm{\mathbf{z}} = 1}} \norm{\mathbf{A}_{I_1} \mathbf{z}}
    = \min_{\substack{\mathbf{z} \in \Null(\mathbf{A}_{I_0}) \\ \norm{\mathbf{z}} = 1}} \norm{\mathbf{A} \mathbf{z}} \\
    \geq \min_{\substack{\mathbf{z} \in \reals^n \\ \norm{\mathbf{z}} = 1}} \norm{\mathbf{A} \mathbf{z}}
    = \sigma_{\mathrm{min}}(\mathbf{A}) ,
\end{equation}
and $\norm{\mathbf{A}_{I_1} \mathbf{P}}_F^2 \leq \norm{\mathbf{P}}^2 \norm{\mathbf{A}_{I_1}}_F^2 \leq \norm{\mathbf{A}}_F^2$. Therefore,
\begin{equation} \label{eq:scrk_vs_rk_rate}
    1 - \frac{\sigma_{\mathrm{min}}^+(\mathbf{A}_{I_1} \mathbf{P})^2}{\norm{\mathbf{A}_{I_1} \mathbf{P}}_F^2}
    \leq 1 - \frac{\sigma_{\mathrm{min}}(\mathbf{A})^2}{\norm{\mathbf{A}}_F^2} .
\end{equation}
However, since each SCRK iteration requires more computation (as discussed in Remark~\ref{rmk:complexity}), we would like to understand when the SCRK method is advantageous to RK overall. In the following, we first examine what features of the matrices $\mathbf{A}$ and $\mathbf{A}_{I_0}$ lead to such an advantage in Section~\ref{subsec:scrk_geometry}. Furthermore, we discuss how a good subset $I_0$ of rows, if not explicitly given, can actually be efficiently found when $\mathbf{A}$ has approximately low-rank structure in Section~\ref{subsec:sampling_subspace}.

\subsubsection{Geometry of the matrix and convergence rates} \label{subsec:scrk_geometry}

As highlighted by~\eqref{eq:scrk_vs_rk_rate}, either $\norm{\mathbf{A}_{I_1} \mathbf{P}}_F^2 \ll \norm{\mathbf{A}}_F^2$ or $\sigma_{\mathrm{min}}^+(\mathbf{A}_{I_1} \mathbf{P}) \gg \sigma_{\mathrm{min}}(\mathbf{A})$ leads to significant per-iteration advantage of SCRK over RK. We describe two specific motivating examples of systems with such structure before generalizing our observations in Corollary~\ref{cor:scrk_coherence} below. For these examples, consider an arbitrary $(m - m_0)$-dimensional subspace $\mathcal{U}$ of $\reals^n$. Let $\{ \mathbf{u}_1, \dots, \mathbf{u}_{m_0} \}$ and $\{ \mathbf{c}_{m_0 + 1}, \mathbf{c}_{m_0 + 2}, \dots, \mathbf{c}_n \}$ be orthonormal bases for $\mathcal{U}$ and $\mathcal{U}^{\perp}$ respectively, and $\epsilon \approx 0$ be a small positive constant.

\begin{example}[$\sigma_{\mathrm{min}}^+(\mathbf{A}_{I_1} \mathbf{P}) \gg \sigma_{\mathrm{min}}(\mathbf{A})$] \label{eg:scrk_structure_smin}
This can happen if the equations in the selected block $\mathbf{A}_{I_0}$ are almost collinear, but the system $\mathbf{A}_{I_1} \mathbf{P}$ with projected rows is well-conditioned. Let $\overline{\mathbf{u}} \coloneqq \frac{1}{m_0} \sum_{i=1}^{m_0} \mathbf{u}_i$, and $\mathbf{A} \in \reals^{n \times n}$ be the matrix where the first $m_0$ rows are given by $\mathbf{a}_j \coloneqq (1 - \epsilon) \overline{\mathbf{u}} + \epsilon \mathbf{u}_i$, $j = 1, \dots, m_0$, and the remaining rows are $\mathbf{c}_{m_0+1}, \dots, \mathbf{c}_{n}$. Choose $I_0 = [m_0]$ so that $\mathbf{P}$ is the orthogonal projection onto $\mathcal{U}^{\perp}$. Then $\sigma^+_{\mathrm{min}}(\mathbf{A}_{I_1} \mathbf{P}) = 1 \gg \epsilon \geq \sigma_{\mathrm{min}}(\mathbf{A})$.
Indeed, if $\mathbf{e}_i$ is the $i$\textsuperscript{th} standard basis vector, then
\[
    \sigma_{\mathrm{min}}(\mathbf{A})
    = \min_{\mathbf{x} \in \reals^n : \norm{\mathbf{x}} = 1} \norm{\mathbf{A}^{\tran} \mathbf{x}}
    \leq \normbig{\mathbf{A}^{\tran} \frac{(\mathbf{e}_1 - \mathbf{e}_2)}{\sqrt{2}}}
    = \frac{\epsilon}{\sqrt{2}} \norm{\mathbf{u}_1 - \mathbf{u}_2} = \epsilon.
\]
Furthermore, since the rows of $\mathbf{A}_{I_1} \mathbf{P}$ form an orthonormal basis for $\mathcal{U}^{\perp}$, for any unit vector $\mathbf{x} = \sum_{i = m_0 + 1}^n \alpha_i \mathbf{c}_i$ in $\mathcal{U}^{\perp}$, we have $\norm{\mathbf{A}_{I_1} \mathbf{x}}^2 = \sum_{i = m_0 + 1}^n \alpha_i^2 = 1$, and hence $\sigma^+_{\mathrm{min}}(\mathbf{A}_{I_1} \mathbf{P}) = \min_{\mathbf{x} \in \mathcal{U}^{\perp} : \norm{\mathbf{x}} = 1} \norm{\mathbf{A}_{I_1} \mathbf{x}} = 1$.
\end{example}

\begin{example}[$\norm{\mathbf{A}_{I_1} \mathbf{P}}_F^2 \ll \norm{\mathbf{A}}_F^2$] \label{eg:scrk_structure_frobnorm}
This can happen if the block $\mathbf{A}_{I_0}$ is highly correlated with the remaining rows $(\mathbf{a}_j)_{j \in I_1}$. Let $\mathbf{A} \in \reals^{n \times n}$ be the matrix where the first $m_0$ rows are $\mathbf{u}_1, \dots, \mathbf{u}_{m_0}$, and the remaining rows are $\mathbf{a}_j \coloneqq (1 - \epsilon) \mathbf{v}_j + \epsilon \mathbf{c}_j$, where $\mathbf{v}_j$ is any unit vector in $\mathcal{U}$, for $j = m_0 + 1, \dots, n$. Choose $I_0 = [m_0]$, so that $\Range(\mathbf{A}_{I_0}^{\tran}) = \mathcal{U}$ and $\mathbf{P}$ is the orthogonal projection onto $\mathcal{U}^{\perp}$. Then $\mathbf{P} \mathbf{a}_j = \epsilon \mathbf{c}_j$ and $\norm{\mathbf{P} \mathbf{a}_j} = \epsilon \ll 1 = \norm{\mathbf{a}_j}$ for all $j \in I_1$, and hence $\norm{\mathbf{A}_{I_1} \mathbf{P}}_F^2 = \sum_{j \in I_1} \norm{\mathbf{P} \mathbf{a}_j}^2 \leq \epsilon^2 \sum_{j=1}^m \norm{\mathbf{a}_j}^2 = \epsilon^2 \norm{\mathbf{A}}_F^2$.
\end{example}

The calculations in the preceding example, together with Theorem~\ref{thm:scrk_convergence} and~\eqref{eq:restricted_singval_ineq}, generalize to the following result:

\begin{corollary} \label{cor:scrk_coherence}
Consider the same setup as Theorem~\ref{thm:scrk_convergence}, and assume that for some $\delta \in [0, 1)$,
\begin{equation} \label{eq:scrk_coherence_assumption}
    \frac{\norm{\mathbf{P} \mathbf{a}_j}^2}{\norm{\mathbf{a}_j}^2} \leq 1 - \delta^2 \quad \text{for all } j \in I_1 .
\end{equation}
Then the SCRK iterates $\mathbf{x}^k$ converge to $\mathbf{x}^*$ in expectation with
\begin{equation} \label{eq:scrk_coherence_1}
    \mathbb{E} \norm{\mathbf{x}^k - \mathbf{x}^*}^2 \leq \left( 1 - \frac{1}{1 - \delta^2} \cdot \frac{\sigma_{\mathrm{min}}(\mathbf{A})^2}{\norm{\mathbf{A}}_F^2} \right)^k \cdot \norm{\mathbf{x}^0 - \mathbf{x}^*}^2 .
\end{equation}
\end{corollary}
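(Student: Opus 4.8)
The plan is to show that the coherence hypothesis \eqref{eq:scrk_coherence_assumption} forces the per-iteration contraction factor appearing in Theorem~\ref{thm:scrk_convergence} to be at least as large as the claimed factor $\tfrac{1}{1-\delta^2}\cdot\tfrac{\sigma_{\mathrm{min}}(\mathbf{A})^2}{\norm{\mathbf{A}}_F^2}$, after which the bound \eqref{eq:scrk_coherence_1} follows directly from \eqref{eq:scrk_convergence_1} by monotonicity. Concretely, I would establish the single inequality $\tfrac{\sigma_{\mathrm{min}}^+(\mathbf{A}_{I_1}\mathbf{P})^2}{\norm{\mathbf{A}_{I_1}\mathbf{P}}_F^2} \geq \tfrac{1}{1-\delta^2}\cdot\tfrac{\sigma_{\mathrm{min}}(\mathbf{A})^2}{\norm{\mathbf{A}}_F^2}$ by handling the numerator and denominator separately.

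For the denominator, I would expand the Frobenius norm as a sum over the rows of $\mathbf{A}_{I_1}\mathbf{P}$, namely $\norm{\mathbf{A}_{I_1}\mathbf{P}}_F^2 = \sum_{j\in I_1}\norm{\mathbf{P}\mathbf{a}_j}^2$ (using that $\mathbf{P}$ is a symmetric projector, so the $j$-th row of $\mathbf{A}_{I_1}\mathbf{P}$ has squared norm $\norm{\mathbf{P}\mathbf{a}_j}^2$). Applying the hypothesis \eqref{eq:scrk_coherence_assumption} term by term gives $\norm{\mathbf{P}\mathbf{a}_j}^2 \leq (1-\delta^2)\norm{\mathbf{a}_j}^2$, and then enlarging the index set from $I_1$ to all of $[m]$ yields $\norm{\mathbf{A}_{I_1}\mathbf{P}}_F^2 \leq (1-\delta^2)\sum_{j\in I_1}\norm{\mathbf{a}_j}^2 \leq (1-\delta^2)\norm{\mathbf{A}}_F^2$. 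This is precisely the generalization of the computation in Example~\ref{eg:scrk_structure_frobnorm}, where $\norm{\mathbf{P}\mathbf{a}_j}=\epsilon\norm{\mathbf{a}_j}$ corresponds to $1-\delta^2=\epsilon^2$.

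For the numerator, I would simply invoke the chain of (in)equalities already recorded in \eqref{eq:restricted_singval_ineq}, which gives $\sigma_{\mathrm{min}}^+(\mathbf{A}_{I_1}\mathbf{P}) \geq \sigma_{\mathrm{min}}(\mathbf{A})$, hence $\sigma_{\mathrm{min}}^+(\mathbf{A}_{I_1}\mathbf{P})^2 \geq \sigma_{\mathrm{min}}(\mathbf{A})^2$. Combining the numerator lower bound with the denominator upper bound produces the target ratio inequality. Substituting it into \eqref{eq:scrk_convergence_1} then gives $1 - \tfrac{\sigma_{\mathrm{min}}^+(\mathbf{A}_{I_1}\mathbf{P})^2}{\norm{\mathbf{A}_{I_1}\mathbf{P}}_F^2} \leq 1 - \tfrac{1}{1-\delta^2}\cdot\tfrac{\sigma_{\mathrm{min}}(\mathbf{A})^2}{\norm{\mathbf{A}}_F^2}$, and raising both sides to the $k$-th power completes the argument.

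There is essentially no substantive obstacle here; the argument is a two-line estimate once the pieces are assembled. The only points that require a small amount of care are the row-wise expansion of $\norm{\mathbf{A}_{I_1}\mathbf{P}}_F^2$ into projected row norms---the step that makes the coherence assumption \eqref{eq:scrk_coherence_assumption} directly applicable---and the final monotonicity step, where one should note that the comparison of $k$-th powers is valid because the left-hand base is a genuine contraction factor in $[0,1)$ and the right-hand base is no smaller, so both are nonnegative and correctly ordered.
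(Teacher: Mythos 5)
Your proof is correct and follows essentially the same route the paper indicates: the paper derives the corollary by combining the rate from Theorem~\ref{thm:scrk_convergence} with the lower bound $\sigma_{\mathrm{min}}^+(\mathbf{A}_{I_1}\mathbf{P}) \geq \sigma_{\mathrm{min}}(\mathbf{A})$ from~\eqref{eq:restricted_singval_ineq} and the row-wise Frobenius estimate $\norm{\mathbf{A}_{I_1}\mathbf{P}}_F^2 = \sum_{j \in I_1}\norm{\mathbf{P}\mathbf{a}_j}^2 \leq (1-\delta^2)\norm{\mathbf{A}}_F^2$ generalizing Example~\ref{eg:scrk_structure_frobnorm}, exactly as you do. Your added care about the final monotonicity step (both contraction factors lying in $[0,1)$) is a fine touch but not a deviation.
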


\begin{remark} \label{rem:coherence_remark}
Since $\mathbf{P}$ is the orthogonal projection onto $\Range(\mathbf{A}_{I_0}^{\tran})^{\perp}$, $\norm{\mathbf{P} \mathbf{a}_j}^2 / \norm{\mathbf{a}_j}^2 = \sin^2 \theta_j$ where $\theta_j$ is the principal angle between the subspaces $\Range(\mathbf{a}_j)$ and $\Range(\mathbf{A}_{I_0}^{\tran})$. Therefore, the quantity $\delta$ in~\eqref{eq:scrk_coherence_assumption} measures the coherence between the row space of the fixed block $\mathbf{A}_{I_0}$ and each of the remaining rows $(\mathbf{a}_j)_{j \in I_1}$. A value of $\delta$ close to one means that the principal angles are uniformly small; i.e.\ all of the $\mathbf{a}_j$ are close to the row space of $\mathbf{A}_{I_0}$ and offer little new information by themselves. By projecting each row with $\mathbf{P}$, the shared information is effectively modded out, and thus each SCRK iteration is able to make more meaningful progress towards the solution.

In particular, if we take $\mathbf{A}_{I_0} = \mathbf{a}_i^{\tran}$ to be a single row and assume that $\norm{\mathbf{a}_i} = 1 = \norm{\mathbf{a}_j}$, then $\norm{\mathbf{P} \mathbf{a}_j}^2 = 1 - |\mathbf{a}_i^{\tran} \mathbf{a}_j|^2$, where $|\mathbf{a}_i^{\tran} \mathbf{a}_j|$ is the correlation between $\mathbf{a}_i$ and $\mathbf{a}_j$. It is shown in~\cite{NeedellWard2013} that the two-subspace Kaczmarz method, which iteratively projects onto the solution space associated with two random rows, significantly improves upon RK if $\mathbf{A}$ has highly correlated rows. Thus, Corollary~\ref{cor:scrk_coherence} quantifies a similar phenomenon for the SCRK method for higher dimensional subspaces.
\end{remark}

\subsubsection{Sampling rows to find a good subspace} \label{subsec:sampling_subspace}

Previously, we showed that bounds of the form $\norm{\mathbf{A}_{I_1} \mathbf{P}}_F^2 \ll \norm{\mathbf{A}}_F^2$ using a specific choice of rows $I_0$ imply significant improvements in the convergence rate of the SCRK method over randomized Kaczmarz. However, what if we are not explicitly given a good set $I_0$, even though there is latent low-rank structure in $\mathbf{A}$ -- in the sense that the matrix has $r \ll n$ dominant singular values -- that can be exploited? We begin by considering a motivating hypothetical example where the row span of $\mathbf{A}_{I_0}$ is able to align perfectly with the leading right singular subspace.

\begin{example}[$\norm{\mathbf{A}_{I_1} \mathbf{P}}_F^2 \ll \norm{\mathbf{A}}_F^2$] \label{eg:scrk_structure_frobnorm2}
Let $\mathbf{A}_{(r)} \coloneqq \mathbf{U}_{(r)} \boldsymbol{\Sigma}_{(r)} \mathbf{V}_{(r)}^{\tran}$ be the best rank-$r$ approximation of $\mathbf{A}$ (with respect to $\norm{\cdot}_F$), where $\boldsymbol{\Sigma}_{(r)} = \mathrm{diag}(\sigma_1(\mathbf{A}), \dots, \sigma_r(\mathbf{A}))$ is the diagonal matrix of the top $r$ singular values of $\mathbf{A}$, and the columns of $\mathbf{V}_{(r)} \in \reals^{n \times r}$ and $\mathbf{U}_{(r)} \in \reals^{m \times r}$ contain the corresponding right and left singular vectors. Suppose that the row span of $\mathbf{A}_{I_0}$ equals $\Range(\mathbf{V}_{(r)})$. Then
\[
    \norm{\mathbf{A}_{I_1} \mathbf{P}}^2_F = \norm{\mathbf{A} \mathbf{P}}^2_F = \norm{\mathbf{P} \mathbf{A}^{\tran}}^2_F = \norm{\mathbf{A}^{\tran} - \mathbf{P}^{\perp} \mathbf{A}^{\tran}}^2_F = \norm{\mathbf{A}^{\tran} - \mathbf{A}_{(r)}^{\tran}}^2_F = \sum_{i=r+1}^n \sigma_i(\mathbf{A})^2 .
\]
If the top $r$ singular values of $\mathbf{A}$ are much larger than the rest, then $\norm{\mathbf{A}_{I_1} \mathbf{P}}^2_F$ is much smaller than $\norm{\mathbf{A}}_F^2 = \sum_{i=1}^n \sigma_i(\mathbf{A})^2$.
\end{example}

Note that in general, such a subset of rows does not exist in $\mathbf{A}$. This raises the following question: can we efficiently find a small subset $I_0$ of rows of $\mathbf{A}$ so that the row span of $\mathbf{A}_{I_0}$ is a good approximation of the top $r$-dimensional right singular subspace of $\mathbf{A}$?
This is known as the problem of finding an approximate CX decomposition in the randomized numerical linear algebra literature. Algorithms have been proposed that sample rows of $\mathbf{A}$ according to their Euclidean norms~\cite{DrKaMa2006b} or their leverage scores $(\ell_j)_{j \in [m]}$~\cite{DrMaMu2008}, where $\ell_j$ is the squared $\ell_2$ norm of the $j$\textsuperscript{th} row of $\mathbf{U}_{(r)}$, using the same notation as the example above (for more details, see~\cite{Mahoney2011}).
The following summarizes guarantees for these two sampling schemes proved in~\cite{DrMaMu2008, BoMaDr2009} and~\cite{DrKaMa2006b}:

\begin{theorem}[Theorem~1~\cite{DrMaMu2008} and~\cite{BoMaDr2009}; Theorem~4~\cite{DrKaMa2006b}] \label{thm:sampling_guarantees}
Suppose that $c$ rows of $\mathbf{A}$ are independently sampled, where row $j$ is selected with probability $p_j$ in each trial. Let $I_0$ be the set of indices of the sampled rows, and $\mathbf{P}$ be the orthogonal projection onto $\Null(\mathbf{A}_{I_0})$.
\begin{enumerate}[label=\textnormal{(\roman*)}, leftmargin=*]
    \item If $p_j = \ell_j / r$ and $c = O(r \log r / \epsilon^2)$, then with probability at least 0.9,
    \begin{equation} \label{eq:sampling_guarantees_2}
        \norm{\mathbf{A} \mathbf{P}}_F^2 \leq (1 + \epsilon) \norm{\mathbf{A} - \mathbf{A}_{(r)}}_F^2 = (1 + \epsilon) \sum_{i=r+1}^n \sigma_i(\mathbf{A})^2 .
    \end{equation}

    \item If $p_j = \norm{\mathbf{a}_j}^2 / \norm{\mathbf{A}}_F^2$ and $c = O(r / \epsilon^2)$, then with probability at least 0.9,
    \begin{equation} \label{eq:sampling_guarantees_1}
        \norm{\mathbf{A} \mathbf{P}}_F^2 \leq \norm{\mathbf{A} - \mathbf{A}_{(r)}}_F^2 + \epsilon \norm{\mathbf{A}}_F^2 = (1 + \epsilon) \sum_{i=r+1}^n \sigma_i(\mathbf{A})^2 + \epsilon \sum_{i=1}^r \sigma_i(\mathbf{A})^2 .
    \end{equation}
\end{enumerate}
\end{theorem}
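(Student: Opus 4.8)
The plan is to recognize $\|\mathbf{A}\mathbf{P}\|_F^2$ as the residual of a column-subset-selection (CX) problem and then invoke the randomized numerical linear algebra machinery behind the cited references. Since $\mathbf{P} = \mathbf{I} - \mathbf{A}_{I_0}^{\dagger}\mathbf{A}_{I_0}$, the rows of $\mathbf{A}\,\mathbf{A}_{I_0}^{\dagger}\mathbf{A}_{I_0}$ are the orthogonal projections of the rows of $\mathbf{A}$ onto $\Range(\mathbf{A}_{I_0}^{\tran})$, so $\|\mathbf{A}\mathbf{P}\|_F^2 = \|\mathbf{A} - \mathbf{A}\,\mathbf{A}_{I_0}^{\dagger}\mathbf{A}_{I_0}\|_F^2$ is exactly the total squared error of approximating $\mathbf{A}$ by a matrix whose rows lie in the span of the selected rows. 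Two basic observations drive the reduction. First, this residual depends only on the subspace $\Range(\mathbf{A}_{I_0}^{\tran})$, which is invariant under rescaling the sampled rows; I would therefore replace $\mathbf{A}_{I_0}$ by $\mathbf{S}\mathbf{A}$, where $\mathbf{S} \in \reals^{c \times m}$ is the sampling-and-rescaling matrix that selects row $j$ and reweights it by $1/\sqrt{c\,p_j}$, without changing $\mathbf{P}$. Second, by optimality of orthogonal projection, $\mathbf{A}\,\mathbf{A}_{I_0}^{\dagger}\mathbf{A}_{I_0}$ is the best Frobenius approximation of $\mathbf{A}$ among all matrices whose rows lie in $\Range((\mathbf{S}\mathbf{A})^{\tran})$, so it suffices to exhibit any single such matrix $\mathbf{X}$ with small $\|\mathbf{A}-\mathbf{X}\|_F$.

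For part (i), the engine is a subspace-embedding estimate. Leverage-score sampling uses $p_j \propto \ell_j = \|(\mathbf{U}_{(r)})_{j,:}\|^2$, which equalizes the importance of the rows of $\mathbf{U}_{(r)}$. I would first show that with $c = O(r\log r/\epsilon^2)$ samples, a matrix concentration inequality applied to the independent rank-one terms $\tfrac{1}{c p_j}(\mathbf{U}_{(r)})_{j,:}^{\tran}(\mathbf{U}_{(r)})_{j,:}$ yields $\|\mathbf{I}_r - (\mathbf{S}\mathbf{U}_{(r)})^{\tran}(\mathbf{S}\mathbf{U}_{(r)})\|_2 \le \epsilon$ with probability at least $0.9$; in particular $\mathbf{S}\mathbf{U}_{(r)}$ has full column rank with $\|[(\mathbf{S}\mathbf{U}_{(r)})^{\tran}(\mathbf{S}\mathbf{U}_{(r)})]^{-1}\|_2 \le (1-\epsilon)^{-1}$. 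I would then take the reconstruction $\mathbf{X} := \mathbf{U}_{(r)}(\mathbf{S}\mathbf{U}_{(r)})^{\dagger}(\mathbf{S}\mathbf{A})$, whose rows lie in $\Range((\mathbf{S}\mathbf{A})^{\tran}) = \Range(\mathbf{A}_{I_0}^{\tran})$ by construction, so that the optimality property gives $\|\mathbf{A}\mathbf{P}\|_F \le \|\mathbf{A}-\mathbf{X}\|_F$. Using $(\mathbf{S}\mathbf{U}_{(r)})^{\dagger}(\mathbf{S}\mathbf{U}_{(r)}) = \mathbf{I}_r$ one checks the exact identity $\mathbf{A}_{(r)} - \mathbf{X} = -\mathbf{U}_{(r)}(\mathbf{S}\mathbf{U}_{(r)})^{\dagger}\mathbf{S}(\mathbf{A}-\mathbf{A}_{(r)})$, and the triangle inequality reduces the task to bounding $\|\mathbf{U}_{(r)}(\mathbf{S}\mathbf{U}_{(r)})^{\dagger}\mathbf{S}(\mathbf{A}-\mathbf{A}_{(r)})\|_F$. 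The crucial cancellation is $\mathbf{U}_{(r)}^{\tran}(\mathbf{A}-\mathbf{A}_{(r)}) = \mathbf{0}$, so an approximate-matrix-multiplication bound on $\|(\mathbf{S}\mathbf{U}_{(r)})^{\tran}\mathbf{S}(\mathbf{A}-\mathbf{A}_{(r)})\|_F$, which under leverage-score sampling is at most $\sqrt{r/c}\,\|\mathbf{A}-\mathbf{A}_{(r)}\|_F$ in expectation, controls this cross term; combining with the embedding bound and rescaling $\epsilon$ yields $\|\mathbf{A}\mathbf{P}\|_F^2 \le (1+\epsilon)\|\mathbf{A}-\mathbf{A}_{(r)}\|_F^2$.

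For part (ii), squared-row-norm sampling no longer equalizes the leading singular subspace, so I would settle for the weaker additive guarantee through a Frobenius approximate-matrix-multiplication bound. A direct second-moment computation shows that for $p_j = \|\mathbf{a}_j\|^2/\|\mathbf{A}\|_F^2$ one has $\mathbb{E}\,\|\mathbf{A}^{\tran}\mathbf{A} - (\mathbf{S}\mathbf{A})^{\tran}(\mathbf{S}\mathbf{A})\|_F^2 \le \tfrac{1}{c}\|\mathbf{A}\|_F^4$, so Markov's inequality makes the two Gram matrices agree to $O(\tfrac{1}{\sqrt c})\|\mathbf{A}\|_F^2$ in Frobenius norm with probability $0.9$. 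Since $\Range(\mathbf{A}_{I_0}^{\tran})$ is the entire span of the sampled rows, it contains the range of any rank-$r$ projector built inside it, and therefore $\|\mathbf{A}\mathbf{P}\|_F^2$ is bounded above by the residual of the best rank-$r$ approximation of $\mathbf{A}$ with rows in that span. The standard low-rank-approximation analysis bounds the latter by $\|\mathbf{A}-\mathbf{A}_{(r)}\|_F^2 + 2\sqrt{r}\,\|\mathbf{A}^{\tran}\mathbf{A} - (\mathbf{S}\mathbf{A})^{\tran}(\mathbf{S}\mathbf{A})\|_F$, via Weyl/Hoffman--Wielandt perturbation of the singular values of the two Gram matrices; the factor $2\sqrt{r}$ is precisely what forces $c = O(r/\epsilon^2)$ and yields $\|\mathbf{A}\mathbf{P}\|_F^2 \le \|\mathbf{A}-\mathbf{A}_{(r)}\|_F^2 + \epsilon\|\mathbf{A}\|_F^2$.

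The main obstacle is the spectral-norm control in part (i): passing from a Frobenius second-moment bound to the operator-norm subspace-embedding estimate $\|\mathbf{I}_r - (\mathbf{S}\mathbf{U}_{(r)})^{\tran}(\mathbf{S}\mathbf{U}_{(r)})\|_2 \le \epsilon$ is exactly what forces the use of leverage scores (to bound the per-sample operator norms $\max_j \ell_j/(c p_j) = r/c$) and introduces the extra $\log r$ factor in $c$ coming from matrix concentration. Everything else -- the reconstruction identity, the triangle inequality, and the cross-term cancellation -- is routine once this embedding and its accompanying approximate-matrix-multiplication bound are in hand. Since the statement is quoted essentially verbatim from the cited references, I would in practice defer these concentration estimates to them and present only the reduction together with the reconstruction argument.
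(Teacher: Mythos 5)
Your proposal is correct, and it coincides with the paper's treatment: the paper states this theorem \emph{without proof}, importing it verbatim from the cited works (Drineas--Mahoney--Muthukrishnan and Boutsidis--Mahoney--Drineas for part (i); Drineas--Kannan--Mahoney for part (ii)), and your sketch faithfully reconstructs exactly those references' arguments --- the leverage-score subspace embedding $\norm{\mathbf{I}_r - (\mathbf{S}\mathbf{U}_{(r)})^{\tran}(\mathbf{S}\mathbf{U}_{(r)})} \leq \epsilon$ plus the cross-term cancellation $\mathbf{U}_{(r)}^{\tran}(\mathbf{A} - \mathbf{A}_{(r)}) = \mathbf{0}$ for the relative-error bound, and the norm-sampling approximate-matrix-multiplication bound combined with Hoffman--Wielandt perturbation of the Gram spectra for the additive-error bound. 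Your preliminary reduction (invariance of $\Range(\mathbf{A}_{I_0}^{\tran})$ under rescaling and repetition, and the optimality of the projection residual $\norm{\mathbf{A}\mathbf{P}}_F$ over all matrices with rows in the sampled span) is also sound, so deferring the concentration estimates to the references, as you propose, is precisely what the paper does.
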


Theorem~\ref{thm:sampling_guarantees} implies that sampling $c \approx r \log r$ rows of $\mathbf{A}$ produces a subspace that tames the leading $r$ singular values of $\mathbf{A}$ with high probability. In practice, it has been observed that a modest oversampling factor (i.e.\ $c$ is a small constant times $r$) usually suffices~\cite{DrMaMu2008}. The relative-error bound~\eqref{eq:sampling_guarantees_2} is better than the additive-error bound~\eqref{eq:sampling_guarantees_1}; however it is more costly because it requires the estimation of the leverage scores (see, e.g.,~\cite{DrMaMaWo2012, HaMaTr2011}). By combining Theorem~\ref{thm:sampling_guarantees} and~\eqref{eq:restricted_singval_ineq} with the SCRK convergence result (Theorem~\ref{thm:scrk_convergence}), we deduce the following:

\begin{corollary}
Suppose that $I_0 \subset [m]$ contains $m_0 = O(r \log r / \epsilon^2)$ rows of $\mathbf{A}$, randomly sampled according to the leverage scores of $\mathbf{A}$ relative to its best rank-$r$ approximation as described in Theorem~\ref{thm:sampling_guarantees}, and partition $\mathbf{A}$ into blocks $\mathbf{A}_{I_0}$ and $\mathbf{A}_{I_1}$ with $I_1 = [m] \setminus I_0$. Then with probability at least $0.9$ over the sampling of $I_0$, the SCRK iterates $\mathbf{x}^k$ satisfy
\[
    \mathbb{E} \norm{\mathbf{x}^k - \mathbf{x}^*}^2 \leq \left( 1 - \frac{\sigma_{n}(\mathbf{A})^2}{(1 + \epsilon) \sum_{i=r+1}^n \sigma_i(\mathbf{A})^2} \right)^k \cdot \norm{\mathbf{x}^0 - \mathbf{x}^*}^2 .
\]
\end{corollary}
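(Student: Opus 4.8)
The plan is to compose the three ingredients already assembled in the excerpt: the SCRK convergence bound of Theorem~\ref{thm:scrk_convergence}, the restricted-singular-value inequality~\eqref{eq:restricted_singval_ineq}, and the leverage-score sampling guarantee of Theorem~\ref{thm:sampling_guarantees}(i). Since the heavy lifting is done, the corollary reduces to bounding the per-iteration rate factor $1 - \sigma_{\mathrm{min}}^+(\mathbf{A}_{I_1} \mathbf{P})^2 / \norm{\mathbf{A}_{I_1} \mathbf{P}}_F^2$ from above by the claimed quantity, on the high-probability event over the random sampling of $I_0$.

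First I would record the algebraic simplification that the sampled block contributes nothing to the Frobenius norm: because $\mathbf{P}$ projects onto $\Null(\mathbf{A}_{I_0})$, every row $\mathbf{a}_j$ with $j \in I_0$ lies in $\Range(\mathbf{A}_{I_0}^{\tran}) = \Null(\mathbf{P})$, so $\mathbf{P} \mathbf{a}_j = \mathbf{0}$ and hence
\[
    \norm{\mathbf{A}_{I_1} \mathbf{P}}_F^2 = \sum_{j \in I_1} \norm{\mathbf{P} \mathbf{a}_j}^2 = \sum_{j \in [m]} \norm{\mathbf{P} \mathbf{a}_j}^2 = \norm{\mathbf{A} \mathbf{P}}_F^2 .
\]
This identity is exactly what lets me feed the sampling bound --- which is phrased in terms of $\norm{\mathbf{A} \mathbf{P}}_F^2$ --- directly into the SCRK rate, which is phrased in terms of $\mathbf{A}_{I_1} \mathbf{P}$.

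Next, on the event of probability at least $0.9$ guaranteed by Theorem~\ref{thm:sampling_guarantees}(i) (using the leverage-score probabilities $p_j = \ell_j / r$ and $c = O(r \log r / \epsilon^2)$), I would invoke the relative-error bound~\eqref{eq:sampling_guarantees_2}, namely $\norm{\mathbf{A} \mathbf{P}}_F^2 \leq (1 + \epsilon) \sum_{i=r+1}^n \sigma_i(\mathbf{A})^2$, together with the numerator lower bound $\sigma_{\mathrm{min}}^+(\mathbf{A}_{I_1} \mathbf{P}) \geq \sigma_{\mathrm{min}}(\mathbf{A}) = \sigma_n(\mathbf{A})$ from~\eqref{eq:restricted_singval_ineq}. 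These combine to give
\[
    \frac{\sigma_{\mathrm{min}}^+(\mathbf{A}_{I_1} \mathbf{P})^2}{\norm{\mathbf{A}_{I_1} \mathbf{P}}_F^2} \geq \frac{\sigma_n(\mathbf{A})^2}{(1 + \epsilon) \sum_{i=r+1}^n \sigma_i(\mathbf{A})^2} ,
\]
so the per-iteration rate factor is bounded above by $1 - \sigma_n(\mathbf{A})^2 / \bigl( (1 + \epsilon) \sum_{i=r+1}^n \sigma_i(\mathbf{A})^2 \bigr)$. Substituting this into Theorem~\ref{thm:scrk_convergence} yields the stated bound, where the probability $0.9$ refers to the sampling of $I_0$ and the expectation $\mathbb{E}$ is over the internal randomness of SCRK conditional on the chosen block.

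There is no serious obstacle here --- the result is essentially a bookkeeping composition of earlier results. The only point requiring a moment's care is the identity $\norm{\mathbf{A}_{I_1} \mathbf{P}}_F^2 = \norm{\mathbf{A} \mathbf{P}}_F^2$ above, which reconciles the two differently-phrased norms; and a tacit consistency check that $m_0 = |I_0| = O(r \log r / \epsilon^2) < n$ holds in the intended low-rank regime $r \ll n$, so that the SCRK hypotheses apply and $\sigma_n(\mathbf{A}) = \sigma_{\mathrm{min}}(\mathbf{A})$ is indeed the smallest singular value of the full-rank matrix $\mathbf{A}$.
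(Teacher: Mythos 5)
Your proposal is correct and follows exactly the route the paper intends: the paper gives no separate proof, stating only that the corollary follows ``by combining Theorem~\ref{thm:sampling_guarantees} and~\eqref{eq:restricted_singval_ineq} with the SCRK convergence result (Theorem~\ref{thm:scrk_convergence})'', which is precisely your composition. Your one explicit addition --- the identity $\norm{\mathbf{A}_{I_1}\mathbf{P}}_F^2 = \norm{\mathbf{A}\mathbf{P}}_F^2$, valid because $\mathbf{P}\mathbf{a}_j = \mathbf{0}$ for $j \in I_0$ --- is the same bookkeeping fact the paper uses implicitly (e.g.\ in Example~\ref{eg:scrk_structure_frobnorm2}), so there is nothing to correct.
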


Thus, if the rank of $\mathbf{A}$ is effectively less than $r$ (in the sense $\sum_{i=r+1}^{n} \sigma_i(\mathbf{A})^2 \ll \norm{\mathbf{A}}_F^2$), then the SCRK method with iterates constrained to the solution space corresponding to $m_0 = O(r \log r)$ randomly sampled rows significantly improves upon RK.
Note that similar results are known for the sketch-and-project method~\cite{derezinski2024sharp}.
Moreover, the effective rank of a large-scale matrix $\mathbf{A}$ can be estimated in a data-driven manner by sketching~\cite{MeierNakatsukasa2024}.

\subsection{SCRK on random data and dimension reduction} \label{sec:random_matrix_analysis}

Previously, we discussed how the SCRK method accelerates the iterative solver when the matrix $\mathbf{A}$ has approximately low-rank structure. In this section, we consider a somewhat complementary setting to study the effect of the subspace constraint when \emph{$\mathbf{A}$ is unstructured and homogeneous}: namely, when $\mathbf{A}$ is drawn from a class of generic random matrices (precisely defined below) whose rows behave like independent standard Gaussian vectors.
Such a matrix is typically well-conditioned as long as its aspect ration $m/n$ is large enough, and hence the corresponding linear system is easily solved using randomized Kaczmarz. However, for almost-square systems with an aspect ratio close to one, the convergence rate is far from optimal.

First, we review some definitions from probability theory (we refer to~\cite{Vershynin2018} for more details). If $\mathbf{a} \in \reals^n$ is a random vector, we say that $\mathbf{a}$ is \emph{mean-zero} if $\ev{\mathbf{a}} = \mathbf{0}$, and $\mathbf{a}$ is \emph{isotropic} if $\ev{\mathbf{a} \mathbf{a}^{\tran}} = \mathbf{I}$. We say that a scalar random variable $X$ is \emph{$K$-subgaussian} if its subgaussian norm $\norm{X}_{\psi_2} \coloneqq \inf_{t > 0} \{ \ev{\exp(X^2 / t^2) \leq 2} \}$ is bounded by $K > 0$; informally, this means that $X$ concentrates around its mean with a light, exponentially decaying tail. Furthermore, a random vector $\mathbf{a}$ is $K$-subgaussian if all of its one-dimensional marginals are $K$-subgaussian: $\norm{\mathbf{a}}_{\psi_2} \coloneqq \sup_{\mathbf{z} \in \reals^n: \norm{\mathbf{z}} = 1} \norm{\innprod{\mathbf{z}}{\mathbf{a}}}_{\psi_2} \leq K$.

As before, we will continue to assume that $\mathbf{A}$ has full rank (almost surely). For our model, we will allow $\mathbf{A}_{I_0} \in \reals^{m_0 \times n}$ to be arbitrary, and we assume that $\mathbf{A}_{I_1} \in \reals^{(m - m_0) \times n}$ is a random matrix that satisfies the following:
\begin{enumerate}[label=\textbf{A\arabic*.}, ref=A\arabic*, leftmargin=*]
    \item \label{asmp:A1}
    The rows of $\mathbf{A}_{I_1}$ are independent, mean-zero, isotropic, and $K$-subgaussian random vectors.
\end{enumerate}

The canonical example for our model is a standard Gaussian matrix $\mathbf{A}_{I_1}$ whose entries are independent standard normal random variables. In this special case, exact computations are often possible.
More generically, Assumption~\ref{asmp:A1} models unstructured matrices containing homogeneous data (that is centered and isotropic) with light tails.

Our main result in this section shows that for such matrices, the subspace constraint imposed by $\mathbf{P}$ acts as a form of dimension reduction, typically resulting in a near-optimal convergence rate of approximately $1 - 1/(n - m_0)$ as long as the ``effective aspect ratio'' $(m - m_0) / (n - m_0)$, which may be much larger than $m / n$, is large enough.

\begin{theorem} \label{thm:subgauss_scrk_convergence}
Suppose that the rows of $\mathbf{A}$ are partitioned into two blocks $\mathbf{A}_{I_0}$ and $\mathbf{A}_{I_1}$ of sizes $m_0$ and $m - m_0$ respectively, where $\mathbf{A}_{I_0}$ is arbitrary and $\mathbf{A}_{I_1}$ is a random matrix that satisfies Assumption~\ref{asmp:A1}.
There exists constants $c, R > 0$ (only depending on $K$) such that if
\[
    r \coloneqq \frac{m - m_0}{n - m_0} \geq R,
\]
then for any $\epsilon \in (0, 1)$, with probability at least $1 - 3 \exp\left\{ -c \epsilon^2 \left(1 - \sqrt{\frac{R}{r}} \right) (m - m_0) \right\}$ over the randomness in $\mathbf{A}_{I_1}$, the SCRK iterates $\mathbf{x}^k$ satisfy
\begin{equation} \label{eq:subgauss_scrk_convergence_1}
    \mathbb{E} \norm{\mathbf{x}^{k} - \mathbf{x}^*}^2 \leq \left( 1 - \frac{(1 - \epsilon)^2}{1 + \epsilon} \left( 1 - \sqrt{\frac{R}{r}} \right)^2 \cdot \frac{1}{n - m_0} \right)^k \cdot \norm{\mathbf{x}^0 - \mathbf{x}^*}^2 .
\end{equation}
In the special case where $\mathbf{A}_{I_1}$ is standard Gaussian, this result holds with $R = 1$.
\end{theorem}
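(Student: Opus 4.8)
The plan is to reduce the SCRK rate from Theorem~\ref{thm:scrk_convergence} to a statement about a tall random matrix with independent isotropic subgaussian rows, and then apply standard nonasymptotic random matrix estimates. Throughout I assume $\mathbf{A}_{I_0}$ has full row rank $m_0$, so that $d \coloneqq \dim \Null(\mathbf{A}_{I_0}) = n - m_0$; let $\mathbf{Q} \in \reals^{n \times d}$ have columns forming an orthonormal basis of $\Null(\mathbf{A}_{I_0})$, so that $\mathbf{P} = \mathbf{Q} \mathbf{Q}^{\tran}$. Setting $\mathbf{B} \coloneqq \mathbf{A}_{I_1} \mathbf{Q} \in \reals^{(m - m_0) \times d}$, the identity $(\mathbf{A}_{I_1} \mathbf{P})(\mathbf{A}_{I_1} \mathbf{P})^{\tran} = \mathbf{B} \mathbf{B}^{\tran}$ shows that $\mathbf{A}_{I_1}\mathbf{P}$ and $\mathbf{B}$ have the same nonzero singular values and the same Frobenius norm. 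Hence the SCRK rate equals $1 - \sigma_{\mathrm{min}}(\mathbf{B})^2 / \norm{\mathbf{B}}_F^2$, and it suffices to lower bound $\sigma_{\mathrm{min}}(\mathbf{B})^2 / \norm{\mathbf{B}}_F^2$.

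The key structural observation is that $\mathbf{B}$ again has independent isotropic subgaussian rows, so that the (possibly complicated) matrix $\mathbf{A}_{I_0}$ has been completely absorbed into the choice of $\mathbf{Q}$. Indeed, the $j$th row of $\mathbf{B}$ is $\mathbf{Q}^{\tran} \mathbf{a}_j$, and since $\mathbf{Q}$ has orthonormal columns, for any unit $\mathbf{z} \in \reals^d$ the vector $\mathbf{Q} \mathbf{z}$ is a unit vector in $\reals^n$, so $\innprod{\mathbf{z}}{\mathbf{Q}^{\tran} \mathbf{a}_j} = \innprod{\mathbf{Q} \mathbf{z}}{\mathbf{a}_j}$ is mean-zero and $K$-subgaussian by Assumption~\ref{asmp:A1}; moreover $\E[\mathbf{Q}^{\tran} \mathbf{a}_j \mathbf{a}_j^{\tran} \mathbf{Q}] = \mathbf{Q}^{\tran} \mathbf{Q} = \mathbf{I}_d$, so the rows are isotropic. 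Thus $\mathbf{B}$ is a generic $(m - m_0) \times d$ subgaussian matrix with $N \coloneqq m - m_0$ rows and aspect ratio exactly $N / d = r$.

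Next I would apply two concentration estimates to $\mathbf{B}$. For the smallest singular value, the standard bound for matrices with independent isotropic $K$-subgaussian rows gives $\sigma_{\mathrm{min}}(\mathbf{B}) \geq \sqrt{N} - C K^2 (\sqrt{d} + t)$ with probability at least $1 - 2 e^{-t^2}$. Defining $R \coloneqq (C K^2)^2$, the leading terms combine as $\sqrt{N} - \sqrt{R}\sqrt{d} = \sqrt{N}(1 - \sqrt{R/r})$, which is nonnegative exactly when $r \geq R$; choosing $t$ proportional to $\epsilon (1 - \sqrt{R/r}) \sqrt{N}$ then yields $\sigma_{\mathrm{min}}(\mathbf{B})^2 \geq (1 - \epsilon)^2 (1 - \sqrt{R/r})^2 N$ with high probability. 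For the Frobenius norm, $\E \norm{\mathbf{B}}_F^2 = \sum_{j \in I_1} \E \norm{\mathbf{Q}^{\tran} \mathbf{a}_j}^2 = N d$, and since $\norm{\mathbf{B}}_F^2$ is a sum of $N$ independent subexponential terms $\norm{\mathbf{Q}^{\tran} \mathbf{a}_j}^2$, Bernstein's inequality gives $\norm{\mathbf{B}}_F^2 \leq (1 + \epsilon) N d$ with failure probability $\exp(-c \epsilon^2 N)$. Intersecting these events (a union bound accounts for the factor $3$) gives
\[
    \frac{\sigma_{\mathrm{min}}(\mathbf{B})^2}{\norm{\mathbf{B}}_F^2} \geq \frac{(1 - \epsilon)^2 (1 - \sqrt{R/r})^2 N}{(1 + \epsilon) N d} = \frac{(1 - \epsilon)^2}{1 + \epsilon}\left(1 - \sqrt{R/r}\right)^2 \cdot \frac{1}{n - m_0} ,
\]
which is exactly the rate in~\eqref{eq:subgauss_scrk_convergence_1}; plugging it into Theorem~\ref{thm:scrk_convergence} completes the proof. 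For the Gaussian case, rotation invariance implies $\mathbf{B} = \mathbf{A}_{I_1} \mathbf{Q}$ is itself a standard Gaussian matrix, and the sharp Gordon / Davidson--Szarek bound $\sigma_{\mathrm{min}}(\mathbf{B}) \geq \sqrt{N} - \sqrt{d} - t$ holds with optimal constant, so one may take $R = 1$.

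The step I expect to be most delicate is the bookkeeping in the singular value estimate: converting the additive bound $\sigma_{\mathrm{min}}(\mathbf{B}) \geq \sqrt{N} - \sqrt{R}(\sqrt{d} + t)$ into a clean multiplicative lower bound of the form $(1 - \epsilon)(1 - \sqrt{R/r})\sqrt{N}$ forces the choice $t \asymp \epsilon (1 - \sqrt{R/r}) \sqrt{N}$, which produces a probability exponent of order $\epsilon^2 (1 - \sqrt{R/r})^2 (m - m_0)$ that must be reconciled with the exponent in the theorem's probability guarantee. Tracking precisely how the constants $C$, $K$ propagate into $c$ and the threshold $R$, and verifying that the margin condition $r \geq R$ is exactly what guarantees $\sigma_{\mathrm{min}}(\mathbf{B}) > 0$ with a quantitatively useful gap, is the crux of the argument.
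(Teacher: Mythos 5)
Your proposal is correct and follows essentially the same route as the paper: the same rotation to the thin matrix $\mathbf{B} = \mathbf{A}_{I_1}\mathbf{Q}$ (the paper's Lemma~\ref{lem:proj_matrix_singvals}, which you justify via the Gram identity rather than a compact SVD), the same subgaussian singular-value tail bound with $R = C^2K^4$ and $t \asymp \epsilon(1-\sqrt{R/r})\sqrt{m-m_0}$ (Lemma~\ref{lem:subgauss_minmax_singvals}), the same Bernstein bound for $\norm{\mathbf{A}_{I_1}\mathbf{P}}_F^2$ (Lemmas~\ref{lem:projected_rownorm_subgauss} and~\ref{lem:projected_frobenius_concentration}, where the paper spells out via a net argument the subgaussianity of $\norm{\mathbf{P}\mathbf{a}_j}$ that you cite as standard), and the same Gaussian sharpening to $R = 1$. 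The bookkeeping tension you flag between the exponent $\epsilon^2(1-\sqrt{R/r})^2(m-m_0)$ and the stated guarantee is present in the paper's own proof as well, resolved there by "simplifying" (the bound is vacuous as $r \to R$ in any case).
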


The values of $R$ and $c$ depend on the precise distributional properties of the random matrix, and are, importantly, independent of $m$ and $n$. Note that for tall, large-scale systems with $m, n \gg 1$ and $r \gg 1$, the requirement $r \geq R$ is not difficult to meet, and taking $\epsilon \approx 0$ shows that the convergence rate is approximately $1 - 1 / (n - m_0)$ with a probability guarantee that is exponentially close to one.

\subsubsection{Proof of Theorem~\ref{thm:subgauss_scrk_convergence}}
\label{subsec:subgauss_scrk_convergence_proof}

To study the typical convergence rate with a random matrix, we will obtain tail bounds for $\sigma_{\mathrm{min}}^+(\mathbf{A}_{I_1} \mathbf{P})$ and $\norm{\mathbf{A}_{I_1} \mathbf{P}}_F^2$. The first lemma is deterministic, and shows that instead of studying the non-zero singular values of the $(m - m_0) \times n$ matrix $\mathbf{A}_{I_1} \mathbf{P}$, we can study the singular values of a thinner $(m - m_0) \times (n - m_0)$ matrix after rotating.

\begin{lemma} \label{lem:proj_matrix_singvals}
Let $\mathbf{X} \in \reals^{m \times n}$ be a matrix, and $\mathbf{P}\in \reals^{n \times n}$ be an orthogonal projection onto a $d$-dimensional subspace of $\reals^n$. Suppose that the columns of $\bar{\mathbf{Q}} \in \reals^{n \times d}$ form an orthonormal basis for $\Range(\mathbf{P})$. Then the non-zero singular values of $\mathbf{X} \mathbf{P} \in \reals^{m \times n}$ and $\mathbf{X} \bar{\mathbf{Q}} \in \reals^{m \times d}$ are the same.
\end{lemma}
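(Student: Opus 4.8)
The plan is to reduce the whole statement to a single Gram-matrix identity. Since $\mathbf{P}$ is the orthogonal projection onto $\Range(\mathbf{P})$ and the columns of $\bar{\mathbf{Q}}$ form an orthonormal basis of exactly that subspace, I would first record the standard factorization $\mathbf{P} = \bar{\mathbf{Q}} \bar{\mathbf{Q}}^{\tran}$, together with the orthonormality relation $\bar{\mathbf{Q}}^{\tran} \bar{\mathbf{Q}} = \mathbf{I}_d$. These two facts are all the structure I will need.

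Next I would recall that the singular values of any matrix $\mathbf{M}$ are the square roots of the eigenvalues of the symmetric positive semidefinite matrix $\mathbf{M}\mathbf{M}^{\tran}$. In particular, two matrices with the same number of rows have the same nonzero singular values (with multiplicity) as soon as their $\mathbf{M}\mathbf{M}^{\tran}$ operators coincide. Applying this to $\mathbf{M} = \mathbf{X}\mathbf{P}$ and $\mathbf{M} = \mathbf{X}\bar{\mathbf{Q}}$, both of which have $m$ rows, reduces the claim to verifying the identity $(\mathbf{X}\mathbf{P})(\mathbf{X}\mathbf{P})^{\tran} = (\mathbf{X}\bar{\mathbf{Q}})(\mathbf{X}\bar{\mathbf{Q}})^{\tran}$.

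This last identity is a one-line computation: using $\mathbf{P} = \mathbf{P}^{\tran} = \bar{\mathbf{Q}}\bar{\mathbf{Q}}^{\tran}$ and $\bar{\mathbf{Q}}^{\tran}\bar{\mathbf{Q}} = \mathbf{I}_d$,
\[
    (\mathbf{X}\mathbf{P})(\mathbf{X}\mathbf{P})^{\tran} = \mathbf{X}\bar{\mathbf{Q}}\bar{\mathbf{Q}}^{\tran}\bar{\mathbf{Q}}\bar{\mathbf{Q}}^{\tran}\mathbf{X}^{\tran} = \mathbf{X}\bar{\mathbf{Q}}\bar{\mathbf{Q}}^{\tran}\mathbf{X}^{\tran} = (\mathbf{X}\bar{\mathbf{Q}})(\mathbf{X}\bar{\mathbf{Q}})^{\tran} .
\]
Since the two $m \times m$ Gram matrices are literally equal, they share the same spectrum, and hence $\mathbf{X}\mathbf{P}$ and $\mathbf{X}\bar{\mathbf{Q}}$ have the same singular values — in particular the same nonzero ones, which is exactly the assertion.

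There is essentially no hard step here; the only points requiring care are bookkeeping ones. I would work with the $m \times m$ operator $\mathbf{M}\mathbf{M}^{\tran}$ rather than $\mathbf{M}^{\tran}\mathbf{M}$, since the latter would produce matrices of the mismatched sizes $n \times n$ and $d \times d$; and I would note explicitly that $\mathbf{X}\mathbf{P}$ merely carries extra zero singular values relative to $\mathbf{X}\bar{\mathbf{Q}}$, which are irrelevant to a statement about \emph{nonzero} singular values. The payoff is that this lemma lets the subsequent random-matrix analysis be carried out on the thinner, genuinely $(m-m_0)\times(n-m_0)$ matrix $\mathbf{A}_{I_1}\bar{\mathbf{Q}}$, where Assumption~\ref{asmp:A1} can be applied directly.
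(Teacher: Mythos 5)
Your proof is correct, and it takes a genuinely different (though closely related) route from the paper's. The paper argues constructively via the SVD: it takes a compact decomposition $\mathbf{X}\bar{\mathbf{Q}} = \mathbf{U}\mathbf{\Sigma}\mathbf{V}^{\tran}$, writes $\mathbf{X}\mathbf{P} = \mathbf{X}\bar{\mathbf{Q}}\bar{\mathbf{Q}}^{\tran} = \mathbf{U}\mathbf{\Sigma}(\bar{\mathbf{Q}}\mathbf{V})^{\tran}$, and observes that $\bar{\mathbf{Q}}\mathbf{V}$ still has orthonormal columns, so this exhibits a valid SVD of $\mathbf{X}\mathbf{P}$ with the same diagonal factor $\mathbf{\Sigma}$. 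You instead pass to the $m \times m$ Gram matrices and verify the identity $(\mathbf{X}\mathbf{P})(\mathbf{X}\mathbf{P})^{\tran} = (\mathbf{X}\bar{\mathbf{Q}})(\mathbf{X}\bar{\mathbf{Q}})^{\tran}$, which follows from $\mathbf{P} = \mathbf{P}^{\tran} = \bar{\mathbf{Q}}\bar{\mathbf{Q}}^{\tran}$ and $\bar{\mathbf{Q}}^{\tran}\bar{\mathbf{Q}} = \mathbf{I}_d$. Both arguments hinge on the same factorization of $\mathbf{P}$, but yours is more economical: it needs only the characterization of singular values as square roots of eigenvalues of $\mathbf{M}\mathbf{M}^{\tran}$, and your choice of $\mathbf{M}\mathbf{M}^{\tran}$ over $\mathbf{M}^{\tran}\mathbf{M}$ is exactly the right bookkeeping, since the latter would produce matrices of the mismatched sizes $n \times n$ and $d \times d$ (whose spectra differ only in zero eigenvalues anyway, but equality of operators is cleaner than matching nonzero spectra). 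What the paper's constructive version buys in exchange is slightly more information: it shows the two matrices share their left singular vectors, and that the right singular vectors of $\mathbf{X}\mathbf{P}$ are the images under $\bar{\mathbf{Q}}$ of those of $\mathbf{X}\bar{\mathbf{Q}}$ — structure that a purely spectral argument like yours discards. Since the downstream use of the lemma (Lemmas~\ref{lem:subgauss_minmax_singvals} and~\ref{lem:subgauss_uniform_min_singval}) only needs the singular \emph{values}, either proof fully serves the purpose, and your closing observation about where the dimension reduction pays off in the random-matrix analysis is exactly right.
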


\begin{proof}
Note that $\mathbf{P} = \bar{\mathbf{Q}}\bar{\mathbf{Q}}^{\tran}$. Let $\mathbf{X} \bar{\mathbf{Q}} = \mathbf{U} \mathbf{\Sigma}\mathbf{V}^{\tran}$ be a compact singular value decomposition of $\mathbf{X} \bar{\mathbf{Q}}$, which means that $\mathbf{\Sigma}$ is a square diagonal matrix containing the $\rank(\mathbf{X})$ non-zero singular values of $\mathbf{X} \bar{\mathbf{Q}}$, and $\mathbf{U}, \mathbf{V}$ are rectangular matrices with orthonormal columns. Observe that $\mathbf{X} \mathbf{P} = \mathbf{U} \mathbf{\Sigma} (\mathbf{V}')^{\tran}$ where $\mathbf{V}' \coloneqq \bar{\mathbf{Q}} \mathbf{V}$ also has orthonormal columns. This allows us to conclude the desired result since the non-zero singular values of $\mathbf{X} \mathbf{P}$ are presented in the same matrix $\mathbf{\Sigma}$.
\end{proof}



The following probabilistic result shows that the smallest non-zero singular value of $\mathbf{A}_{I_1} \mathbf{P}$ can be lower bounded with very high probability.

\begin{lemma} \label{lem:subgauss_minmax_singvals}
Let $\mathbf{P}$ be an orthogonal projection onto a fixed $(n - m_0)$-dimensional subspace. Suppose that the random matrix $\mathbf{A}_{I_1} \in \reals^{(m - m_0) \times n}$ satisfies Assumption~\ref{asmp:A1}. Then there exists an absolute constant $C > 0$ such that for all $s > 0$, with probability at least $1 - 2e^{-s^2 (m - m_0)}$, the smallest and largest non-zero singular values of $\mathbf{A}_{I_1} \mathbf{P}$ satisfy
\begin{align*}
    &\sigma_{\mathrm{min}}^+(\mathbf{A}_{I_1} \mathbf{P})
    \geq \sqrt{m - m_0} - C K^2 (\sqrt{n - m_0} + s \sqrt{m - m_0})
    \quad  \text{ and } \nonumber \\
    &\sigma_{\mathrm{max}}(\mathbf{A}_{I_1} \mathbf{P})
    \leq \sqrt{m - m_0} + C K^2 (\sqrt{n - m_0} + s \sqrt{m - m_0}) .
\end{align*}
In the case where $\mathbf{A}_{I_1}$ is Gaussian, the inequalities hold with $CK^2$ replaced by one.
\end{lemma}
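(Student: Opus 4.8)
The plan is to reduce the problem to a standard non-asymptotic bound on the extreme singular values of a tall subgaussian matrix, using the rotation established in Lemma~\ref{lem:proj_matrix_singvals}. Let $\bar{\mathbf{Q}} \in \reals^{n \times (n - m_0)}$ have orthonormal columns spanning $\Range(\mathbf{P})$. By Lemma~\ref{lem:proj_matrix_singvals}, the non-zero singular values of $\mathbf{A}_{I_1}\mathbf{P}$ coincide with those of the thinner matrix $\mathbf{B} \coloneqq \mathbf{A}_{I_1}\bar{\mathbf{Q}} \in \reals^{(m - m_0) \times (n - m_0)}$. Since $m - m_0 \geq n - m_0$, and we will show that $\mathbf{B}$ has full column rank with high probability, it suffices to control $\sigma_{\mathrm{min}}(\mathbf{B})$ and $\sigma_{\mathrm{max}}(\mathbf{B})$ and then translate back.

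The key step is to verify that the rows of $\mathbf{B}$ inherit Assumption~\ref{asmp:A1} in the reduced dimension $n - m_0$. If $\mathbf{a}$ denotes a generic row of $\mathbf{A}_{I_1}$, then the corresponding row of $\mathbf{B}$ is $\bar{\mathbf{Q}}^{\tran}\mathbf{a}$. Independence across rows is preserved. The mean-zero property follows from $\ev{\bar{\mathbf{Q}}^{\tran}\mathbf{a}} = \bar{\mathbf{Q}}^{\tran}\ev{\mathbf{a}} = \mathbf{0}$, and isotropy follows from $\ev{\bar{\mathbf{Q}}^{\tran}\mathbf{a}\mathbf{a}^{\tran}\bar{\mathbf{Q}}} = \bar{\mathbf{Q}}^{\tran}\bar{\mathbf{Q}} = \mathbf{I}_{n - m_0}$, using $\ev{\mathbf{a}\mathbf{a}^{\tran}} = \mathbf{I}$ together with the orthonormality of the columns of $\bar{\mathbf{Q}}$. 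For the subgaussian norm, observe that for any unit $\mathbf{z} \in \reals^{n - m_0}$ the vector $\bar{\mathbf{Q}}\mathbf{z}$ is a unit vector in $\reals^n$, so $\norm{\innprod{\mathbf{z}}{\bar{\mathbf{Q}}^{\tran}\mathbf{a}}}_{\psi_2} = \norm{\innprod{\bar{\mathbf{Q}}\mathbf{z}}{\mathbf{a}}}_{\psi_2} \leq K$, whence $\norm{\bar{\mathbf{Q}}^{\tran}\mathbf{a}}_{\psi_2} \leq K$.

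With these properties in hand, I would apply the standard deviation bound for the extreme singular values of a matrix with independent, mean-zero, isotropic, $K$-subgaussian rows (e.g.\ Theorem~4.6.1 in~\cite{Vershynin2018}): for every $t \geq 0$, with probability at least $1 - 2e^{-t^2}$,
\[
    \sqrt{m - m_0} - CK^2(\sqrt{n - m_0} + t) \leq \sigma_{\mathrm{min}}(\mathbf{B}) \leq \sigma_{\mathrm{max}}(\mathbf{B}) \leq \sqrt{m - m_0} + CK^2(\sqrt{n - m_0} + t).
\]
Choosing $t = s\sqrt{m - m_0}$ yields the stated probability $1 - 2e^{-s^2(m - m_0)}$ and the stated bounds, noting that $\sigma_{\mathrm{min}}(\mathbf{B}) = \sigma_{\mathrm{min}}^+(\mathbf{B}) = \sigma_{\mathrm{min}}^+(\mathbf{A}_{I_1}\mathbf{P})$ once the lower bound is positive (so $\mathbf{B}$ has full column rank and $\mathbf{A}_{I_1}\mathbf{P}$ has exactly $n - m_0$ non-zero singular values), and likewise $\sigma_{\mathrm{max}}(\mathbf{B}) = \sigma_{\mathrm{max}}(\mathbf{A}_{I_1}\mathbf{P})$. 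For the Gaussian case, $\bar{\mathbf{Q}}^{\tran}\mathbf{a}$ is again standard Gaussian by rotational invariance, so $\mathbf{B}$ is a standard Gaussian matrix and the sharper Gordon/Davidson--Szarek bounds apply, replacing $CK^2$ by one.

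Since the result is essentially a direct corollary of a known matrix deviation inequality, I do not anticipate a serious obstacle; the only point requiring care is the bookkeeping in the reduction---confirming that passing to $\bar{\mathbf{Q}}^{\tran}\mathbf{a}$ preserves isotropy and the subgaussian constant \emph{exactly} (which relies crucially on $\bar{\mathbf{Q}}$ having orthonormal columns), and matching the non-zero singular values of $\mathbf{A}_{I_1}\mathbf{P}$ to those of the full-rank tall matrix $\mathbf{B}$.
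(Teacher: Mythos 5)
Your proof is correct and takes essentially the same approach as the paper's: reduce via Lemma~\ref{lem:proj_matrix_singvals} to the thin matrix $\mathbf{B} = \mathbf{A}_{I_1}\bar{\mathbf{Q}}$, check that the rows of $\mathbf{B}$ still satisfy Assumption~\ref{asmp:A1} in dimension $n - m_0$, and apply the standard subgaussian singular-value bound \cite[Theorem~4.6.1]{Vershynin2018} with $t = s\sqrt{m - m_0}$, using Gaussian concentration for the sharper constant in the Gaussian case. Your explicit verification of mean-zero, isotropy, and the subgaussian norm under the rotation is precisely the step the paper leaves as ``directly checked,'' and your handling of $\sigma_{\mathrm{min}}^+$ versus $\sigma_{\mathrm{min}}(\mathbf{B})$ is sound.
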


\begin{proof}
Let $\bar{\mathbf{Q}} \in \reals^{(m - m_0) \times (n - m_0)}$ be a matrix whose columns form an orthonormal basis for $\Range(\mathbf{P})$. By Lemma~\ref{lem:proj_matrix_singvals}, the smallest and largest non-zero singular values of $\mathbf{A}_{I_1} \mathbf{P} \in \reals^{(m - m_0) \times n}$ and $\mathbf{B} := \mathbf{A}_{I_1} \bar{\mathbf{Q}} \in \reals^{(m - m_0) \times (n - m_0)}$ are equal. It can be directly checked that the rows of $\mathbf{B}$ are also independent, mean-zero, isotropic, $K$-subgaussian random vectors in $\reals^{n - m_0}$. Hence, using a standard tail bound for the extremal singular values $\sigma_{\mathrm{min}}(\mathbf{B})$ and $\sigma_{\mathrm{max}}(\mathbf{B})$ of the random matrix $\mathbf{B}$ (see~\cite[Theorem~4.6.1]{Vershynin2018}) implies the claimed inequalities.
In the Gaussian case, the precise constants can be computed using Gaussian concentration tools (see~\cite[{Corollary 7.3.3, Exercise 7.3.4}]{Vershynin2018}).
\end{proof}

\begin{remark}
The minimum restricted singular value of random matrices has also been studied in the context of universality laws for randomized dimension reduction in~\cite{OymakTropp2018}. If the entries of $\mathbf{A}_{I_1}$ are independent random variables satisfying some mild regularity conditions, then \cite[Theorem~II]{OymakTropp2018} establishes that $\sigma_{\mathrm{min}}^+(\mathbf{A}_{I_1} \mathbf{P}) \approx \sqrt{m - m_0} - C \sqrt{n - m_0}$ with high probability since $\Range(\mathbf{P})$ is a $(n - m_0)$-dimensional subspace in $\reals^n$. Thus, $\sigma_{\mathrm{min}}^+(\mathbf{A}_{I_1} \mathbf{P})$ is of comparable order for a wide class of distributions. However, the maximum restricted singular value is not necessarily universal.
\end{remark}

Next, our goal is to obtain tail bounds for $\norm{\mathbf{A}_{I_1} \mathbf{P}}_F^2 = \sum_{j \in I_1} \norm{\mathbf{P} \mathbf{a}_j}^2$. In the setting where the rows $\mathbf{a}_j$ of $\mathbf{A}_{I_1}$ are mean-zero, $K$-subgaussian random vectors, it is proved in~\cite{JinEtAl2019} that the Euclidean norms $\norm{\mathbf{a}_j}$ are $O(K \sqrt{n})$-subgaussian. The next lemma states that the norms of the projected vectors $\norm{\mathbf{P} \mathbf{a}_j}$ are $O(K \sqrt{n - m_0})$-subgaussian.

\begin{lemma} \label{lem:projected_rownorm_subgauss}
Let $\mathbf{a}$ be a mean-zero, $K$-subgaussian random vector in $\reals^n$, and $\mathbf{P}$ be an orthogonal projection onto a fixed $d$-dimensional subspace. Then the subgaussian norm of $\norm{\mathbf{P} \mathbf{a}}$ is bounded by $C K \sqrt{d}$ for some absolute constant $C > 0$.
\end{lemma}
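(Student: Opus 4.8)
The plan is to reduce the $d$-dimensional projection to a genuine $d$-dimensional subgaussian vector and then invoke the full-dimensional Euclidean-norm bound from~\cite{JinEtAl2019} already quoted just before the statement. Let $\mathbf{Q} \in \reals^{n \times d}$ be a matrix whose columns form an orthonormal basis for $\Range(\mathbf{P})$, so that $\mathbf{P} = \mathbf{Q}\mathbf{Q}^{\tran}$. Since $\mathbf{Q}$ has orthonormal columns, $\mathbf{Q}^{\tran}\mathbf{Q} = \mathbf{I}_d$, and therefore
\[
    \norm{\mathbf{P}\mathbf{a}} = \norm{\mathbf{Q}\mathbf{Q}^{\tran}\mathbf{a}} = \norm{\mathbf{Q}^{\tran}\mathbf{a}} .
\]
Thus it suffices to bound the subgaussian norm of $\norm{\mathbf{b}}$, where $\mathbf{b} \coloneqq \mathbf{Q}^{\tran}\mathbf{a} \in \reals^d$.

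Next I would verify that $\mathbf{b}$ inherits the defining properties of $\mathbf{a}$ in the reduced dimension $d$. It is mean-zero because $\ev{\mathbf{b}} = \mathbf{Q}^{\tran}\ev{\mathbf{a}} = \mathbf{0}$. For subgaussianity, fix any unit vector $\mathbf{z} \in \reals^d$; then $\innprod{\mathbf{z}}{\mathbf{b}} = \innprod{\mathbf{Q}\mathbf{z}}{\mathbf{a}}$, and $\mathbf{Q}\mathbf{z}$ is a unit vector in $\reals^n$ since $\norm{\mathbf{Q}\mathbf{z}} = \norm{\mathbf{z}} = 1$. By the definition of the subgaussian norm of the random vector $\mathbf{a}$, it follows that $\norm{\innprod{\mathbf{z}}{\mathbf{b}}}_{\psi_2} = \norm{\innprod{\mathbf{Q}\mathbf{z}}{\mathbf{a}}}_{\psi_2} \le \norm{\mathbf{a}}_{\psi_2} \le K$. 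Taking the supremum over unit $\mathbf{z} \in \reals^d$ shows that $\mathbf{b}$ is a mean-zero, $K$-subgaussian random vector in $\reals^d$.

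Finally, I would apply the norm bound of~\cite{JinEtAl2019} to $\mathbf{b}$ in its ambient dimension $d$: since $\mathbf{b}$ is mean-zero and $K$-subgaussian in $\reals^d$, its Euclidean norm $\norm{\mathbf{b}}$ is $O(K\sqrt{d})$-subgaussian, i.e.\ $\norm{\,\norm{\mathbf{b}}\,}_{\psi_2} \le C K \sqrt{d}$ for an absolute constant $C > 0$. Combining this with the identity $\norm{\mathbf{P}\mathbf{a}} = \norm{\mathbf{b}}$ established above yields the claim. The argument is essentially a change of coordinates, so I do not expect any genuine analytic obstacle; the only point requiring care is confirming that subgaussianity of a random vector is preserved, with the \emph{same} constant $K$, under the isometric linear map $\mathbf{Q}^{\tran}$, which is exactly what the unit-vector computation above establishes.
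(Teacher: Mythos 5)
Your proof is correct, but it takes a genuinely different route from the paper. The paper proves the lemma from scratch by replicating the net argument of~\cite[Lemma~1]{JinEtAl2019} on the subspace: it fixes a $1/2$-net $\mathcal{N}$ of $\Range(\mathbf{P}) \cap \mathbb{S}^{n-1}$ of cardinality at most $5^{d}$ (via Lemma~\ref{lem:subspace_unitsphere_covering}), uses the approximation bound $\norm{\mathbf{P}\mathbf{a}} \leq 2\innprod{\mathbf{v}}{\mathbf{P}\mathbf{a}}$ for some net point $\mathbf{v}$, takes a union bound over $\mathcal{N}$ to obtain the tail estimate $\prob{\norm{\mathbf{P}\mathbf{a}} \geq t} \leq 2e^{-t^2/(10K^2 d)}$, and converts this to a $\psi_2$-bound. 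You instead perform an isometric change of coordinates: writing $\mathbf{P} = \mathbf{Q}\mathbf{Q}^{\tran}$ with $\mathbf{Q}$ having orthonormal columns, you note $\norm{\mathbf{P}\mathbf{a}} = \norm{\mathbf{Q}^{\tran}\mathbf{a}}$, verify that $\mathbf{Q}^{\tran}\mathbf{a} \in \reals^d$ is mean-zero and $K$-subgaussian with the \emph{same} constant (since $\mathbf{Q}$ maps unit vectors to unit vectors), and then invoke the full-dimensional result of~\cite{JinEtAl2019} as a black box in ambient dimension $d$. Your verification of the preserved hypotheses is complete and the reduction is sound, so there is no gap. Your argument is shorter and arguably cleaner; indeed, the paper itself uses this very rotation trick elsewhere (Lemma~\ref{lem:proj_matrix_singvals} and its application inside the proof of Lemma~\ref{lem:subgauss_minmax_singvals}, where the rows of $\mathbf{A}_{I_1}\bar{\mathbf{Q}}$ are checked to remain mean-zero, isotropic, and $K$-subgaussian). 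What the paper's self-contained approach buys is explicit control of the tail bound and the constant, plus the covering lemma (Lemma~\ref{lem:subspace_unitsphere_covering}) that it reuses later in the proof of Lemma~\ref{lem:subgauss_uniform_min_singval}; what your approach buys is brevity and a clear conceptual statement that projection is, up to isometry, just restriction to a $d$-dimensional subgaussian vector.
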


The proof uses the following geometric observation about unit spheres of subspaces, which we record for later reference. We say that $\mathcal{N}$ is an \emph{$\epsilon$-net} of a set $S \subseteq \reals^n$ if $\mathcal{N} \subseteq S$ and every point in $S$ is within distance $\epsilon$ of some point in $\mathcal{N}$. It is known that there exists an $\epsilon$-net of the $d$-dimensional unit sphere $\mathbb{S}^{d - 1} \coloneqq \{ \mathbf{x} \in \reals^d : \norm{\mathbf{x}} = 1 \}$ with cardinality bounded by $(1 + 2 / \epsilon)^d$ for any $d$ (see, e.g.,~\cite[Corollary 4.2.13]{Vershynin2018}). Thus, if $\mathcal{U}$ is a $d$-dimensional subspace of $\reals^n$, then by identifying $\mathcal{U} \cong \reals^d$ (using the fact that rotations are isometries) and obtaining a net of $\mathbb{S}^{d-1}$, we deduce the following:

\begin{lemma} \label{lem:subspace_unitsphere_covering}
Let $\mathcal{U}$ be a $d$-dimensional subspace of $\reals^n$. Then for any $\epsilon > 0$, there exists an $\epsilon$-net $\mathcal{N}$ of $\mathcal{U} \cap \mathbb{S}^{n-1}$ with cardinality $|\mathcal{N}| \leq (1 + 2 / \epsilon)^d$.
\end{lemma}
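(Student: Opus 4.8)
The plan is to transport the known covering bound for the standard unit sphere $\mathbb{S}^{d-1} \subseteq \reals^d$ over to $\mathcal{U} \cap \mathbb{S}^{n-1}$ through a linear isometry, exploiting the isometry-invariance of covering numbers. The key observation is that $\mathcal{U} \cap \mathbb{S}^{n-1}$ is precisely the unit sphere of the inner product space $\mathcal{U}$, which is isometric to $\reals^d$; hence a net of $\mathbb{S}^{d-1}$ can be pulled into $\mathcal{U}$ without distorting distances or increasing cardinality.

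Concretely, I would fix an orthonormal basis $\mathbf{q}_1, \dots, \mathbf{q}_d$ of $\mathcal{U}$ and collect these vectors as the columns of a matrix $\mathbf{Q} \in \reals^{n \times d}$, so that $\mathbf{Q}^{\tran} \mathbf{Q} = \mathbf{I}_d$. The map $\mathbf{y} \mapsto \mathbf{Q} \mathbf{y}$ is then a linear isometry of $\reals^d$ onto $\mathcal{U}$, since $\norm{\mathbf{Q} \mathbf{y}} = \norm{\mathbf{y}}$ for every $\mathbf{y} \in \reals^d$, and it restricts to a bijection between $\mathbb{S}^{d-1}$ and $\mathcal{U} \cap \mathbb{S}^{n-1}$.

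Next, I would invoke the standard bound (Corollary~4.2.13 of~\cite{Vershynin2018}, quoted in the paragraph preceding the statement) to obtain an $\epsilon$-net $\mathcal{N}_0$ of $\mathbb{S}^{d-1}$ with $|\mathcal{N}_0| \leq (1 + 2/\epsilon)^d$, and then set $\mathcal{N} \coloneqq \{ \mathbf{Q} \mathbf{y} : \mathbf{y} \in \mathcal{N}_0 \}$. Because $\mathbf{Q} \mathbf{y} \in \mathcal{U} \cap \mathbb{S}^{n-1}$ whenever $\mathbf{y} \in \mathbb{S}^{d-1}$, we have $\mathcal{N} \subseteq \mathcal{U} \cap \mathbb{S}^{n-1}$, as required by the paper's definition of an $\epsilon$-net. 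Given any $\mathbf{x} = \mathbf{Q} \mathbf{y} \in \mathcal{U} \cap \mathbb{S}^{n-1}$, choosing $\mathbf{y}_0 \in \mathcal{N}_0$ with $\norm{\mathbf{y} - \mathbf{y}_0} \leq \epsilon$ yields $\norm{\mathbf{x} - \mathbf{Q} \mathbf{y}_0} = \norm{\mathbf{y} - \mathbf{y}_0} \leq \epsilon$, so $\mathcal{N}$ covers $\mathcal{U} \cap \mathbb{S}^{n-1}$ at scale $\epsilon$; since $\mathbf{Q}$ is injective, $|\mathcal{N}| = |\mathcal{N}_0| \leq (1 + 2/\epsilon)^d$.

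There is no genuine obstacle here: the entire content is the routine fact that an isometry carries an $\epsilon$-net to an $\epsilon$-net of the same cardinality. The only point that warrants explicit verification is that the transported set lands inside the target (so that it is a legitimate net under the convention $\mathcal{N} \subseteq S$), which is immediate from $\mathbf{Q}$ mapping $\mathbb{S}^{d-1}$ into $\mathcal{U} \cap \mathbb{S}^{n-1}$.
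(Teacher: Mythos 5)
Your proposal is correct and takes essentially the same approach as the paper, which proves the lemma in the preceding paragraph by identifying $\mathcal{U} \cong \reals^d$ via an isometry and transporting the standard $(1 + 2/\epsilon)^d$-cardinality net of $\mathbb{S}^{d-1}$. You simply make the isometry explicit as $\mathbf{y} \mapsto \mathbf{Q}\mathbf{y}$ with $\mathbf{Q}^{\tran}\mathbf{Q} = \mathbf{I}_d$ and verify the routine details (membership of the transported net in $\mathcal{U} \cap \mathbb{S}^{n-1}$, preservation of distances, and injectivity for the cardinality bound), all of which are correct.
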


\begin{proof}[Proof of Lemma~\ref{lem:projected_rownorm_subgauss}]
The proof is similar to the proof of~\cite[Lemma~1]{JinEtAl2019}; we provide it for completeness. First, it can be checked that $\mathbf{P} \mathbf{a}$ is also a mean-zero $K$-subgaussian random vector. Next, by Lemma~\ref{lem:subspace_unitsphere_covering}, we can fix a $1/2$-net $\mathcal{N}$ of $\Range(\mathbf{P}) \cap \mathbb{S}^{n-1}$ with cardinality $|\mathcal{N}| \leq 5^{n - m_0}$. By using these observations, we will show that
\begin{equation} \label{eq:projected_rownorm_subgauss_pf1}
    \prob{\norm{\mathbf{P} \mathbf{a}} \geq t} \leq 2 e^{-t^2 / (10 K^2 (n - m_0))} \quad \text{for all } t \geq 0 .
\end{equation}
Indeed, for any realization of $\mathbf{P} \mathbf{a}$, there exists $\mathbf{v} \in \mathcal{N}$ such that $\normbig{\frac{\mathbf{P} \mathbf{a}}{\norm{\mathbf{P} \mathbf{a}}} - \mathbf{v}} \leq 1/2$, and we can write
\[
    \norm{\mathbf{P} \mathbf{a}}
    = \innprod{\mathbf{v}}{\mathbf{P} \mathbf{a}} + \innprod{\frac{\mathbf{P} \mathbf{a}}{\norm{\mathbf{P} \mathbf{a}}} - \mathbf{v}}{\mathbf{P} \mathbf{a}}
    \leq \innprod{\mathbf{v}}{\mathbf{P} \mathbf{a}} + \normbig{\frac{\mathbf{P} \mathbf{a}}{\norm{\mathbf{P} \mathbf{a}}} - \mathbf{v}} \cdot \norm{\mathbf{P} \mathbf{a}}
    \leq \innprod{\mathbf{v}}{\mathbf{P} \mathbf{a}} + \frac{\norm{\mathbf{P} \mathbf{a}}}{2}
\]
to deduce that $\norm{\mathbf{P} \mathbf{a}} \leq 2 \innprod{\mathbf{v}}{\mathbf{P} \mathbf{a}}$. Therefore, because $\innprod{\mathbf{v}}{\mathbf{P} \mathbf{a}}$ is a mean-zero, $K$-subgaussian random variable, a union bound implies that for all $t \geq 0$,
\[
   \prob{\norm{\mathbf{P} \mathbf{a}} \geq t}
   \leq \prob{\exists \mathbf{v} \in \mathcal{N} : \innprod{\mathbf{v}}{\mathbf{P} \mathbf{a}} \geq \frac{t}{2}}
   \leq 5^{n - m_0} \cdot e^{-t^2 / (4 K^2)} .
\]
We claim that this implies~\eqref{eq:projected_rownorm_subgauss_pf1}. If $t^2 \leq 4 \log(5) K^2 (n - m_0)$, then~\eqref{eq:projected_rownorm_subgauss_pf1} trivially holds. Otherwise, if $t^2 = 4 \log(5) K^2 (n - m_0) + s$ for $s > 0$, then
\[
   5^{n - m_0} \cdot e^{-t^2 / (4 K^2)}
   = e^{-s / (4K^2)}
   \leq e^{-s / (6 \log(5) K^2 (n - m_0))}
   \leq 2 e^{-t^2 / (10 K^2 (n - m_0))} .
\]
Thus, the tail bound~\eqref{eq:projected_rownorm_subgauss_pf1} holds for all $t \geq 0$, which implies that (by, e.g.,~\cite[Proposition~2.5.2]{Vershynin2018}), $\norm{\mathbf{P} \mathbf{a}}$ has subgaussian norm bounded by $C K \sqrt{n - m_0}$ for some absolute constant $C > 0$.
\end{proof}

\begin{lemma} \label{lem:projected_frobenius_concentration}
Consider the same setup as Lemma~\ref{lem:subgauss_minmax_singvals}. Then there exists an absolute constant $c > 0$ such that for all $\epsilon > 0$, with probability at least $1 - e^{-c \min\{ \epsilon, \epsilon^2 \} (m - m_0) / K^4}$,
\begin{equation} \label{eq:projected_frobenius_concentration_1}
    \norm{\mathbf{A}_{I_1} \mathbf{P}}_F^2 \leq (1 + \epsilon) (m - m_0)(n - m_0) .
\end{equation}
\end{lemma}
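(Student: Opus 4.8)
The plan is to write $\norm{\mathbf{A}_{I_1} \mathbf{P}}_F^2 = \sum_{j \in I_1} \norm{\mathbf{P} \mathbf{a}_j}^2$ as a sum of independent nonnegative summands and apply a Bernstein-type concentration inequality for subexponential random variables. First I would pin down the mean. Since each $\mathbf{a}_j$ is isotropic and $\mathbf{P}$ is the orthogonal projection onto an $(n - m_0)$-dimensional subspace, $\ev{\norm{\mathbf{P} \mathbf{a}_j}^2} = \tr\!\left( \mathbf{P}\, \ev{\mathbf{a}_j \mathbf{a}_j^{\tran}} \right) = \tr(\mathbf{P}) = n - m_0$, and therefore $\ev{\norm{\mathbf{A}_{I_1} \mathbf{P}}_F^2} = (m - m_0)(n - m_0)$. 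This is exactly the quantity on the right-hand side of~\eqref{eq:projected_frobenius_concentration_1} without the factor $(1 + \epsilon)$, so the claim reduces to a one-sided deviation bound for the sum above its mean at level $t = \epsilon (m - m_0)(n - m_0)$.

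Next I would control the subexponential scale of the centered summands $X_j \coloneqq \norm{\mathbf{P} \mathbf{a}_j}^2 - (n - m_0)$. By Lemma~\ref{lem:projected_rownorm_subgauss}, each $\norm{\mathbf{P} \mathbf{a}_j}$ is $CK\sqrt{n - m_0}$-subgaussian. Using the standard fact that the square of a subgaussian variable is subexponential with $\norm{Y^2}_{\psi_1} = \norm{Y}_{\psi_2}^2$, I get $\norm{\norm{\mathbf{P} \mathbf{a}_j}^2}_{\psi_1} \leq C^2 K^2 (n - m_0)$, and since centering preserves the subexponential norm up to an absolute constant, the $X_j$ are independent, mean-zero, and satisfy $\norm{X_j}_{\psi_1} \leq \mu$ with $\mu = C' K^2 (n - m_0)$.

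Then I would invoke Bernstein's inequality for sums of independent centered subexponential variables (e.g.~\cite[Theorem~2.8.1]{Vershynin2018}), in its upper-tail form, with $N = m - m_0$ summands and deviation $t = \epsilon (m - m_0)(n - m_0)$. The two competing terms in the exponent are the subgaussian contribution $t^2 / \sum_{j} \mu^2 = \epsilon^2 (m - m_0) / (C'^2 K^4)$ and the subexponential contribution $t / \mu = \epsilon (m - m_0) / (C' K^2)$; taking their minimum and absorbing constants (using $K \gtrsim 1$, which holds for any isotropic subgaussian vector) yields an exponent of order $\min\{\epsilon, \epsilon^2\} (m - m_0) / K^4$, matching the stated probability. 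Using the one-sided bound avoids an extraneous factor of $2$.

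I do not anticipate a serious obstacle. The only delicate points are (i) that the projected row norms are genuinely subexponential at the correct scale $K^2(n - m_0)$ rather than $K^2 n$ — this is precisely what Lemma~\ref{lem:projected_rownorm_subgauss} buys us, and it is what lets the bound depend on the reduced dimension $n - m_0$ — and (ii) bookkeeping the two regimes of Bernstein's inequality so that the quadratic (small-$t$) regime produces the $\epsilon^2$ term and the linear (large-$t$) regime produces the $\epsilon$ term, keeping the $K^4$-dependence explicit throughout.
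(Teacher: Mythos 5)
Your proposal is correct and takes essentially the same route as the paper's proof: compute the mean $(m - m_0)(n - m_0)$ via isotropy and $\tr(\mathbf{P}) = n - m_0$, invoke Lemma~\ref{lem:projected_rownorm_subgauss} for the subgaussian scale $CK\sqrt{n - m_0}$ of the projected row norms, and apply Bernstein's inequality \cite[Theorem~2.8.1]{Vershynin2018} after centering, with $t = \epsilon (m - m_0)(n - m_0)$. The additional bookkeeping you make explicit --- the $\norm{Y^2}_{\psi_1} = \norm{Y}_{\psi_2}^2$ step, centering preserving the subexponential norm up to a constant, and $K \gtrsim 1$ (valid for any isotropic subgaussian vector) to absorb the linear-regime exponent into $K^4$ --- is precisely what the paper leaves implicit.
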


\begin{proof}
Since $\mathbf{P}$ is an orthogonal projection onto an $(n - m_0)$-dimensional subspace and $\mathbf{a}_j$ is isotropic, using the cyclic property of trace implies that for all $j \in I_1$,
\[
    \mathbb{E} \norm{\mathbf{P} \mathbf{a}_j}^2
    = \ev{\mathrm{tr}(\mathbf{a}_j^{\tran} \mathbf{P} \mathbf{a}_j)}
    = \mathrm{tr}(\ev{\mathbf{a}_j \mathbf{a}_j^{\tran}} \mathbf{P})
    = \mathrm{tr}(\mathbf{P}) = n - m_0.
\]
Therefore,
\[
    \mathbb{E}\norm{\mathbf{A}_{I_1} \mathbf{P}}_F^2
    = \mathbb{E}\left[ \sum\limits_{j \in I_1} \norm{\mathbf{P} \mathbf{a}_j}^2 \right]
    = |I_1| \cdot \mathbb{E}\norm{\mathbf{P} \mathbf{a}_1}^2
    = (m - m_0) (n - m_0) .
\]
Now, the random variables $\norm{\mathbf{P} \mathbf{a}_j}$ are independent and, by Lemma~\ref{lem:projected_rownorm_subgauss}, $O(K \sqrt{n - m_0})$-subgaussian. Hence, by centering and Bernstein's inequality~\cite[Theorem~2.8.1]{Vershynin2018}, there exists an absolute constant $c > 0$ such that
\[
    \prob{\norm{\mathbf{A}_{I_1} \mathbf{P}}_F^2 - (m - m_0)(n - m_0) \geq t} \leq \exp\left( -\frac{c}{K^4} \min\left\{ \frac{t^2}{(m - m_0) (n - m_0)^2} ,\, \frac{t}{n - m_0} \right\} \right)
\]
for all $t \geq 0$. By choosing $t = \epsilon(m - m_0)(n - m_0)$, we obtain~\eqref{eq:projected_frobenius_concentration_1}.
\end{proof}

The main result of this section now easily follows from the tail bounds for $\mathbf{A}_{I_1} \mathbf{P}$.

\begin{proof}[Proof of Theorem~\ref{thm:subgauss_scrk_convergence}]
Suppose that the random matrix $\mathbf{A}_{I_1} \mathbf{P}$ satisfies the events in Lemma~\ref{lem:subgauss_minmax_singvals}, using $s = \epsilon \left( 1 - \frac{CK^2}{\sqrt{r}} \right) \frac{1}{CK^2}$ and relabelling $C^2 K^4$ by $R$, and Lemma~\ref{lem:projected_frobenius_concentration}. If this occurs, which holds with the claimed probability after simplifying, the convergence result~\eqref{eq:subgauss_scrk_convergence_1} then directly follows from the SCRK convergence result, Theorem~\ref{thm:scrk_convergence}.
\end{proof}


\section{Analysis of the QuantileSCRK algorithm} \label{sec:corruptions_analysis}

In this section, we consider the QuantileSCRK method for solving corrupted linear systems (Algorithm~\ref{alg:quantilescrk_method}). Recall that in our model, we are given a corrupted measurement vector $\widetilde{\mathbf{b}} \coloneqq \mathbf{b} + \mathbf{b}_{\mathcal{C}}$, where $\mathbf{b}_{\mathcal{C}}$ is a sparse vector of arbitrary corruptions supported on $\mathcal{C} \subseteq [m]$, as well as a corruption-free subset $I_0 \subseteq [m]$ of size $m_0$ such that $(\mathbf{b}_{\mathcal{C}})_{I_0} = \mathbf{0}$. Our goal is to reconstruct the solution $\mathbf{x}^*$ of the linear system $\mathbf{A} \mathbf{x} = \mathbf{b}$.

Our main result in this section is Theorem~\ref{thm:quantilescrk_convergence}, which shows that the QuantileSCRK method is able to converge robustly and efficiently when $\mathbf{A}$ is an unstructured random matrix as long as the effective aspect ratio $(m - m_0) / (n - m_0)$ is tall enough and the proportion of corrupted measurements $|\mathcal{C}| / (m - m_0)$ is not too large. Specifically, we consider the class of ``Gaussian-like'' random matrices previously considered in Section~\ref{sec:random_matrix_analysis}, and assume that $\mathbf{A}_{I_1}$ is a random matrix that satisfies Assumption~\ref{asmp:A1} in addition to the following continuity assumption:
\begin{enumerate}[label=\textbf{A\arabic*.}, ref=A\arabic*, leftmargin=*]
    \setcounter{enumi}{1}  
    \item \label{asmp:A2}
    Each row of $\mathbf{A}_{I_1}$ either has a log-concave distribution\footnote{A log-concave distribution in $\reals^n$ has a probability density $f$ that satisfies $f(\lambda \mathbf{x} + (1 - \lambda) \mathbf{y}) \geq f(\mathbf{x})^\lambda f(\mathbf{y})^{1 - \lambda}$ for all $\lambda \in [0, 1]$ and $\mathbf{x}, \mathbf{y} \in \reals^n$.} or has independent entries with bounded probability densities.\footnote{By scaling, we may assume without loss of generality that the densities are bounded by one.}
\end{enumerate}
The class of log-concave distributions is a generalization of the standard Gaussian distribution that allows for some dependence between the entries of a random vector; for example, the uniform distribution over any convex body in $\reals^n$ is log-concave. For more details and examples, we refer to~\cite{SaumardWellner2014}.

Assumption~\ref{asmp:A2} is essentially needed for technical reasons for our proof of Theorem~\ref{thm:quantilescrk_convergence}. Empirically, convergence is observed even if $\mathbf{A}$ has random discrete entries~\cite{HaNeReSw2022}, or if $\mathbf{A}$ is a structured, sparse matrix in an imaging problem (see Section~\ref{subsec:experiments_ct}). The assumption of having independent coordinates with bounded densities in each row was previously considered in~\cite{HaNeReSw2022}, and we extend the model by allowing for log-concave distributions.

We can now state our main result:

\begin{theorem} \label{thm:quantilescrk_convergence}
Suppose that the rows of $\mathbf{A}$ are partitioned into blocks $\mathbf{A}_{I_0}$ and $\mathbf{A}_{I_1}$ of sizes $m_0$ and $m - m_0$ respectively, where $\mathbf{A}_{I_0}$ is arbitrary and $\mathbf{A}_{I_1}$ is a random matrix that satisfies Assumptions~\ref{asmp:A1} and~\ref{asmp:A2}. Suppose that the corrupted measurement vector $\widetilde{\mathbf{b}} = \mathbf{b} + \mathbf{b}_{\mathcal{C}}$ is observed, $(\mathbf{b}_{\mathcal{C}})_{I_0} = \mathbf{0}$, and a quantile parameter $q \in (0, 1)$ is fixed. There exist constants $\beta_0 \in (0, 1)$ and $R \geq 1$ (only depending on $q$ and $K$) such that if
\begin{equation} \label{eq:quantilescrk_convergence_1}
    \frac{m - m_0}{n - m_0} \geq R \quad \text{and} \quad \beta \coloneqq \frac{|\mathcal{C}|}{m - m_0} \leq \beta_0 ,
\end{equation}
then for some constants $c_1, c_2 > 0$ (only depending on $q$, $\beta$, and $K$), with probability at least $1 - 6 e^{-c_1 (m - m_0)}$ over the randomness in $\mathbf{A}_{I_1}$, the QuantileSCRK iterates $\mathbf{x}^k$ from Algorithm~\ref{alg:quantilescrk_method} converge to the solution $\mathbf{x}^*$ in expectation with
\begin{equation} \label{eq:quantilescrk_convergence_3}
    \mathbb{E} \norm{\mathbf{x}^k - \mathbf{x}^*}^2 \leq \left( 1 - \frac{c_2}{n - m_0} \right)^k \cdot \norm{\mathbf{x}^0 - \mathbf{x}^*}^2 .
\end{equation}
\end{theorem}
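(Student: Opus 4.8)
The plan is to decouple the argument into a deterministic sufficient condition for a one-step expected contraction and a probabilistic verification that this condition holds uniformly with high probability. Writing $\mathbf{e}^k \coloneqq \mathbf{x}^k - \mathbf{x}^*$, the key structural simplification is that, because $I_0$ is corruption-free and the iterates remain in the solution space $\mathbf{A}_{I_0}\mathbf{x} = \mathbf{b}_{I_0}$, we have $\mathbf{e}^k \in \Null(\mathbf{A}_{I_0})$ throughout; hence the entire analysis lives on the $(n - m_0)$-dimensional space $\Null(\mathbf{A}_{I_0})$ and is governed by the geometry of $\mathbf{A}_{I_1}\mathbf{P}$. I would first invoke Lemma~\ref{lem:quantilescrk_convergence_condition}, the deterministic condition adapted from~\cite{Steinerberger2023}, to reduce convergence to a per-direction statement: conditional on $\mathbf{x}^k$ and writing $\widehat{\mathbf{e}}^k \coloneqq \mathbf{e}^k/\norm{\mathbf{e}^k}$, the admissible clean rows must deliver a lower bound on the expected projected progress $\sum_{j \in J \setminus \mathcal{C}} |\mathbf{a}_j^{\tran}\mathbf{P}\widehat{\mathbf{e}}^k|^2$ while the admissible corrupted rows contribute a strictly smaller error, the difference (normalized by $Z_J = \sum_{j \in J}\norm{\mathbf{P}\mathbf{a}_j}^2$) producing the contraction factor $1 - c_2/(n-m_0)$. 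Here a selected clean row executes exactly the SCRK step~\eqref{eq:scrk_update_main} since $(\mathbf{b}_{\mathcal{C}})_j = 0$, whereas a corrupted row projects onto a shifted hyperplane.

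The spectral ingredients are supplied directly by the previous section. On the high-probability event of Lemmas~\ref{lem:subgauss_minmax_singvals} and~\ref{lem:projected_frobenius_concentration}, which holds with probability $1 - O(e^{-c(m-m_0)})$, one has $\norm{\mathbf{A}_{I_1}\mathbf{P}}_F^2 \asymp (m-m_0)(n-m_0)$ and $\sigma_{\mathrm{min}}^+(\mathbf{A}_{I_1}\mathbf{P})^2 \gtrsim (m - m_0)(1 - \sqrt{R/r})^2$, so that the idealized clean SCRK rate $\sigma_{\mathrm{min}}^+(\mathbf{A}_{I_1}\mathbf{P})^2/\norm{\mathbf{A}_{I_1}\mathbf{P}}_F^2$ is of order $1/(n-m_0)$. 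The tall aspect-ratio requirement $r \geq R$ with $R \asymp K^4$ is precisely what places these quantities in the correct regime, mirroring the mechanism of Theorem~\ref{thm:subgauss_scrk_convergence}.

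The crux of the quantile analysis is an anti-concentration (small-ball) estimate, where the continuity assumption~\ref{asmp:A2} is essential. The tension is that the $q$-quantile filter discards the rows with the largest residuals $|\mathbf{a}_j^{\tran}\mathbf{P}\mathbf{e}^k|$ -- exactly those carrying the most progress mass -- because large residuals are the signature of corruption. For a fixed direction $\mathbf{u}$, log-concavity (or bounded densities) of the clean projected residuals $\mathbf{a}_j^{\tran}\mathbf{P}\mathbf{u}$ prevents them from clustering, so the retained $q$-fraction still carries a constant fraction of the total mass $\norm{\mathbf{A}_{I_1}\mathbf{P}\mathbf{u}}^2 \geq \sigma_{\mathrm{min}}^+(\mathbf{A}_{I_1}\mathbf{P})^2$; this transfers the spectral lower bound into the progress term up to a $q$-dependent constant. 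Meanwhile the corrupted rows, being sparse ($\beta \leq \beta_0$), furnish at most $|\mathcal{C}|$ admissible rows, each of residual at most $\gamma_q$, so their total interference in the expected-decrease bound is $O(\beta \gamma_q^2)$ and is dominated by the clean progress once $\beta_0$ is chosen small relative to $q$.

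The main obstacle is upgrading these fixed-direction estimates to a bound holding simultaneously over all error directions, since the admissible set $J$ and threshold $\gamma_q$ depend adaptively on the random iterate $\mathbf{x}^k$. I would pass to the unit sphere of $\Null(\mathbf{A}_{I_0})$ and use an $\epsilon$-net of cardinality $(1 + 2/\epsilon)^{n - m_0}$ from Lemma~\ref{lem:subspace_unitsphere_covering}, establishing anti-concentration and mass-retention at each net point with failure probability $e^{-c(m-m_0)}$ and then union-bounding. The delicate point is that $J$ depends discontinuously on the direction, so the net must be fine enough that the admissible sets of neighboring directions overlap sufficiently; the tall aspect ratio $r \geq R$ again supplies the factor $m - m_0$ in the exponent needed to defeat the $n - m_0$ net cardinality. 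Once Lemma~\ref{lem:quantilescrk_convergence_condition} is verified on this event, iterating the one-step contraction and taking the full expectation yields~\eqref{eq:quantilescrk_convergence_3}.
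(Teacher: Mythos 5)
Your high-level architecture matches the paper's proof --- reduce to the deterministic contraction condition of Lemma~\ref{lem:quantilescrk_convergence_condition} and then verify it with high probability using anti-concentration under Assumption~\ref{asmp:A2} and a net on $\Range(\mathbf{P}) \cap \mathbb{S}^{n-1}$ --- but your probabilistic verification has a concrete gap. The spectral quantities entering the condition~\eqref{eq:quantilescrk_convergence_condition_3} are the \emph{uniform submatrix} quantities $\sigma_{q-\beta,\mathrm{min}}^+(\mathbf{A}_{I_1}\mathbf{P})$ from~\eqref{eq:defn_uniform_min_singval} and $Z_{q-\beta}$ from~\eqref{eq:defn_uniform_max_frobnorm}, not the full-matrix quantities you import from Lemmas~\ref{lem:subgauss_minmax_singvals} and~\ref{lem:projected_frobenius_concentration}. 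Since deleting rows can only \emph{decrease} the smallest singular value, the full-matrix bound $\sigma_{\mathrm{min}}^+(\mathbf{A}_{I_1}\mathbf{P}) \gtrsim \sqrt{m-m_0}$ is on the wrong side of the inequality: it gives no control over $\inf_T \sigma_{\mathrm{min}}^+((\mathbf{A}_{I_1}\mathbf{P})_T)$ taken over all $T$ with $|T| = (q-\beta)(m-m_0)$, which is what bounds the progress from whichever clean admissible rows survive the quantile filter. The paper closes this with two dedicated lemmas: Lemma~\ref{lem:subgauss_uniform_min_singval}, which lower-bounds the uniform minimum singular value by $\frac{\alpha^{3/2}}{32}\sqrt{m-m_0}$ via a net over $\Range(\mathbf{P}) \cap \mathbb{S}^{n-1}$, per-row small-ball estimates (Rudelson--Vershynin for independent bounded-density entries, Carbery--Wright for log-concave rows --- this is exactly where Assumption~\ref{asmp:A2} enters), and a Chernoff bound guaranteeing that for each net direction fewer than $\alpha(m-m_0)/2$ rows have small inner product, so that \emph{every} subset of cardinality $\alpha(m-m_0)$ automatically retains enough well-correlated rows; and Lemma~\ref{lem:uniform_max_frobnorm}, which bounds $Z_{q-\beta}$ by a union bound over all $\binom{m-m_0}{\alpha(m-m_0)}$ subsets. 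The aspect-ratio condition~\eqref{eq:subgauss_uniform_min_singval_1} is what defeats the net cardinality and the combinatorial factor in the exponent.

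Relatedly, your final paragraph's worry about the discontinuous dependence of $J$ and $\gamma_q$ on the error direction --- and your proposed fix that the net be fine enough for ``admissible sets of neighboring directions to overlap sufficiently'' --- is both misplaced and unresolved. Misplaced, because Lemma~\ref{lem:quantilescrk_convergence_condition} has already stripped out all adaptivity: its hypothesis is pessimized over \emph{every} subset of cardinality $(q-\beta)(m-m_0)$, so the probabilistic stage never sees $J$, $\gamma_q$, or the iterate; the quantile itself is handled inside the deterministic lemma by counting the at least $(1-q-\beta)(m-m_0)+|S|$ clean rows whose residuals exceed $\gamma_q$ against $\sigma_{\mathrm{max}}(\mathbf{A}_{I_1}\mathbf{P})$ (your description of that lemma as a ``per-direction statement'' about admissible rows is a misreading). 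Unresolved, because if you instead pursue your direct fixed-direction ``mass retention after the quantile filter'' route, the admissible set is determined adversarially (the corruptions may depend on $\mathbf{A}$), and making the overlap argument rigorous is precisely the hard step you leave unproved. Replacing your sketch with the two uniform lemmas yields, on the intersection of the events, $\sigma_{q-\beta,\mathrm{min}}^+(\mathbf{A}_{I_1}\mathbf{P})^2 / \sigma_{\mathrm{max}}(\mathbf{A}_{I_1}\mathbf{P})^2 \gtrsim (q-\beta)^3$, so the condition holds once $\beta \leq \beta_0(q, K)$, and $C_{q,\beta} \geq c_2/(n-m_0)$ follows from $\sigma_{q-\beta,\mathrm{min}}^+(\mathbf{A}_{I_1}\mathbf{P})^2 \gtrsim m-m_0$ against $Z_{q-\beta} \lesssim (m-m_0)(n-m_0)$, recovering~\eqref{eq:quantilescrk_convergence_3} with the claimed probability $1 - 6e^{-c_1(m-m_0)}$.
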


As mentioned previously, the values of the constants $c_1$ and $c_2$ are dominated in large-scale systems with $m, n \gg 1$, and the requirement $(m - m_0) / (n - m_0) \geq R$ is not difficult to meet if the system is tall and there is enough external knowledge (i.e.\ $m \gg n$, $m_0 \gg 1$). In addition, we believe that it should be possible to obtain sharper theoretical estimates for $\beta_0$ and $R$.

The strategy to prove Theorem~\ref{thm:quantilescrk_convergence} is to combine a deterministic sufficient condition for the convergence of QuantileSCRK, adapting a result for QuantileRK proved by~\cite{Steinerberger2023}, with probabilistic results for the spectra of the projected random matrix $\mathbf{A}_{I_1} \mathbf{P}$.
First, we define some spectral quantities that will be needed. For $\alpha \in (0, 1]$, define
\begin{equation} \label{eq:defn_uniform_min_singval}
    \sigma_{\alpha, \mathrm{min}}^+(\mathbf{A}_{I_1} \mathbf{P})
    \coloneqq \inf_{\substack{T \subseteq I_1 \\ |T| = \alpha (m - m_0)}} \sigma_{\mathrm{min}}^+((\mathbf{A}_{I_1} \mathbf{P})_T) .
\end{equation}
For simplicity, we will assume throughout that $\alpha (m - m_0)$ is an integer. This quantity, which represents the uniform minimum singular value over all row submatrices of $\mathbf{A}_{I_1} \mathbf{P}$ with $\alpha(m - m_0)$ rows, has appeared in previous analyses of the QuantileRK algorithm~\cite{HaNeReSw2022, Steinerberger2023}, and quantifies whether there are any poorly-conditioned row submatrices that are particularly susceptible to corruptions. Similarly, define
\begin{equation} \label{eq:defn_uniform_max_frobnorm}
    Z_{\alpha}
    \coloneqq \sup_{\substack{T \subseteq I_1 \\ |T| = \alpha (m - m_0)}} \norm{(\mathbf{A}_{I_1} \mathbf{P})_T}_F^2 .
\end{equation}
Together, \eqref{eq:defn_uniform_min_singval} and~\eqref{eq:defn_uniform_max_frobnorm} provide a uniform upper bound for the scaled condition numbers of all row submatrices of $\mathbf{A}_{I_1} \mathbf{P}$ possibly containing the uncorrupted, admissible rows (i.e.\ whose residuals are smaller than the quantile threshold). This is the key step which guarantees that the expected improvement from moving in an uncorrupted direction offsets the expected deterioration caused by a corruption.

\subsection{A deterministic condition for convergence}
\label{subsec:quantilescrk_convergence_condition}

The following lemma provides a deterministic condition that guarantees the convergence of the QuantileSCRK algorithm for any arbitrary sparse corruption vector $\mathbf{b}_{\mathcal{C}}$, which may be of independent interest. This is adapted from a similar condition for convergence of the QuantileRK algorithm that was proved by Steinerberger~\cite{Steinerberger2023}.

\begin{lemma} \label{lem:quantilescrk_convergence_condition}
Recall that $\mathcal{C} \subseteq I_1$ are the indices of the corrupted measurements, and $\beta = |\mathcal{C}| / (m - m_0)$. Suppose that $\beta < q < 1 - \beta$. Define
\begin{equation} \label{eq:quantilescrk_convergence_condition_1}
    C_{q, \beta} \coloneqq
    \frac{1}{Z_{q - \beta}} \left\{ \sigma_{q - \beta, \mathrm{min}}^+(\mathbf{A}_{I_1} \mathbf{P})^2
    - \sigma_{\mathrm{max}}(\mathbf{A}_{I_1} \mathbf{P})^2
    \left( \frac{\beta}{1 - q}
    + 2 \sqrt{\frac{\beta}{1 - q}} \right) \right\} ,
\end{equation}
where $\sigma_{q - \beta, \mathrm{min}}^+(\mathbf{A}_{I_1} \mathbf{P})$ is defined in~\eqref{eq:defn_uniform_min_singval}, and $Z_{q - \beta}$ in~\eqref{eq:defn_uniform_max_frobnorm}. If $C_{q, \beta} > 0$, or equivalently,
\begin{equation} \label{eq:quantilescrk_convergence_condition_3}
    \frac{\sigma^+_{q - \beta, \mathrm{min}}(\mathbf{A}_{I_1} \mathbf{P})^2}{\sigma_{\mathrm{max}}(\mathbf{A}_{I_1} \mathbf{P})^2}
    > \frac{\beta}{1 - q} + 2 \sqrt{\frac{\beta}{1 - q}} ,
\end{equation}
then the QuantileSCRK iterates $\mathbf{x}^k$ from Algorithm~\ref{alg:quantilescrk_method} converge to the solution $\mathbf{x}^*$ in expectation with
\begin{equation} \label{eq:quantilescrk_convergence_condition_4}
    \mathbb{E} \norm{\mathbf{x}^k - \mathbf{x}^*}^2 \leq \left( 1 - C_{q, \beta} \right)^k \cdot \norm{\mathbf{x}^0 - \mathbf{x}^*}^2 .
\end{equation}
\end{lemma}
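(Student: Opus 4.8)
The plan is to track the squared error $\mathbf{e}^k \coloneqq \mathbf{x}^k - \mathbf{x}^*$ through one QuantileSCRK step and establish a uniform contraction. The key structural fact is that, because $I_0$ is corruption-free ($(\mathbf{b}_{\mathcal{C}})_{I_0} = \mathbf{0}$), the initialization gives $\mathbf{e}^0 \in \Null(\mathbf{A}_{I_0}) = \Range(\mathbf{P})$, and every update adds a multiple of $\mathbf{P}\mathbf{a}_j \in \Range(\mathbf{P})$, so $\mathbf{e}^k \in \Range(\mathbf{P})$ for all $k$; in particular $\mathbf{a}_j^{\tran}\mathbf{e}^k = (\mathbf{P}\mathbf{a}_j)^{\tran}\mathbf{e}^k$. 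I write $\rho_j \coloneqq -\mathbf{a}_j^{\tran}\mathbf{e}^k$ for the clean residual and $c_j \coloneqq (\mathbf{b}_{\mathcal{C}})_j$ for the corruption, so that the observed residual is $r_j = \widetilde{b}_j - \mathbf{a}_j^{\tran}\mathbf{x}^k = \rho_j + c_j$, with $c_j = 0$ for $j \notin \mathcal{C}$. A direct expansion of the update yields the exact one-step identity $\norm{\mathbf{e}^{k+1}}^2 = \norm{\mathbf{e}^k}^2 - (\rho_j^2 - c_j^2)/\norm{\mathbf{P}\mathbf{a}_j}^2$ when row $j$ is selected. Taking the conditional expectation over the admissible sampling (probability $\norm{\mathbf{P}\mathbf{a}_j}^2/Z_J$ on $j \in J$, with $Z_J = \norm{(\mathbf{A}_{I_1}\mathbf{P})_J}_F^2$) gives
\[
    \mathbb{E}_k \norm{\mathbf{e}^{k+1}}^2 = \norm{\mathbf{e}^k}^2 - \frac{1}{Z_J} \sum_{j \in J} (\rho_j^2 - c_j^2) .
\]

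The heart of the argument is to lower bound the numerator $N \coloneqq \sum_{j \in J}(\rho_j^2 - c_j^2)$ by $C_{q,\beta} Z_{q-\beta}\norm{\mathbf{e}^k}^2$. I split $J$ into the uncorrupted admissible rows $G = J \setminus \mathcal{C}$ and the corrupted admissible rows $T = J \cap \mathcal{C}$, and keep the clean contribution intact: $N = \sum_{j \in G}\rho_j^2 + (\norm{\boldsymbol{\rho}_T}^2 - \norm{\mathbf{c}_T}^2)$, where $\boldsymbol{\rho}_T$, $\mathbf{c}_T$, $\mathbf{r}_T$ collect the entries $\rho_j$, $c_j$, $r_j$ over $T$. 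For the good rows, $\sum_{j \in G}\rho_j^2 = \norm{(\mathbf{A}_{I_1}\mathbf{P})_G \mathbf{e}^k}^2$; since $|G| \geq (q - \beta)(m - m_0)$ and $\mathbf{e}^k \in \Range(\mathbf{P})$ is orthogonal to the nullspace of every such submatrix (using $\Null((\mathbf{A}_{I_1}\mathbf{P})_{G'}) = \Null(\mathbf{P})$ for $|G'| = (q-\beta)(m-m_0)$, exactly as in the proof of Theorem~\ref{thm:scrk_convergence}), this is at least $\sigma_{q-\beta,\mathrm{min}}^+(\mathbf{A}_{I_1}\mathbf{P})^2 \norm{\mathbf{e}^k}^2$. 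For the corrupted rows I use $\mathbf{c}_T = \mathbf{r}_T - \boldsymbol{\rho}_T$ and the triangle inequality $\norm{\mathbf{c}_T} \leq \norm{\mathbf{r}_T} + \norm{\boldsymbol{\rho}_T}$; expanding the square cancels the $\norm{\boldsymbol{\rho}_T}^2$ term and leaves $\norm{\boldsymbol{\rho}_T}^2 - \norm{\mathbf{c}_T}^2 \geq -\norm{\mathbf{r}_T}^2 - 2\norm{\mathbf{r}_T}\norm{\boldsymbol{\rho}_T}$.

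It remains to bound these two corruption terms. Since every admissible row satisfies $|r_j| \leq \gamma_q$ and $|T| \leq \beta(m - m_0)$, we have $\norm{\mathbf{r}_T} \leq \sqrt{\beta(m-m_0)}\,\gamma_q$, while $\norm{\boldsymbol{\rho}_T} = \norm{(\mathbf{A}_{I_1}\mathbf{P})_T\mathbf{e}^k} \leq \sigma_{\mathrm{max}}(\mathbf{A}_{I_1}\mathbf{P})\norm{\mathbf{e}^k}$. The crucial threshold bound comes from the rejected rows: at least $(1 - q - \beta)(m - m_0)$ rows are simultaneously uncorrupted and rejected, and each such row has clean residual $|\rho_j| \geq \gamma_q$, so that $(1 - q - \beta)(m - m_0)\,\gamma_q^2 \leq \sum_{j}\rho_j^2 \leq \sigma_{\mathrm{max}}(\mathbf{A}_{I_1}\mathbf{P})^2\norm{\mathbf{e}^k}^2$. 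Substituting the resulting bound on $\gamma_q$ turns $\norm{\mathbf{r}_T}^2$ and the cross term $2\norm{\mathbf{r}_T}\norm{\boldsymbol{\rho}_T}$ into multiples of $\sigma_{\mathrm{max}}(\mathbf{A}_{I_1}\mathbf{P})^2\norm{\mathbf{e}^k}^2$, matching the coefficients $\beta/(1-q)$ and $2\sqrt{\beta/(1-q)}$ of condition~\eqref{eq:quantilescrk_convergence_condition_3} (the $1-q$ factor emerging from the rejected fraction). Combining with the good-row estimate reproduces exactly $N \geq C_{q,\beta}Z_{q-\beta}\norm{\mathbf{e}^k}^2$; dividing by the sampling normalization $Z_J$, controlled uniformly by the Frobenius quantity $Z_{q-\beta}$, yields the per-step contraction $\mathbb{E}_k\norm{\mathbf{e}^{k+1}}^2 \leq (1 - C_{q,\beta})\norm{\mathbf{e}^k}^2$, and iterating with the tower property gives~\eqref{eq:quantilescrk_convergence_condition_4}.

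I expect the main obstacle to be the corruption-control step: one must show that the damage from landing on an admissible but corrupted row is dominated by the progress guaranteed by the clean admissible rows. The delicate points are (i) the cancellation of $\norm{\boldsymbol{\rho}_T}^2$ via the triangle inequality, so that no uncompensated $\sigma_{\mathrm{max}}(\mathbf{A}_{I_1}\mathbf{P})^2$ term survives, and (ii) the quantile bound on $\gamma_q$, which is precisely what converts the sparsity level $\beta$ and quantile $q$ into the threshold $\beta/(1-q) + 2\sqrt{\beta/(1-q)}$ of the convergence condition. A secondary point is the careful bookkeeping of the sampling normalization and of the restricted-singular-value lower bound over row-submatrices, the latter relying on $\mathbf{e}^k \in \Range(\mathbf{P})$ together with the nullspace structure $\Null(\mathbf{A}_{I_1}\mathbf{P}) = \Null(\mathbf{P})$ established in the proof of Theorem~\ref{thm:scrk_convergence}.
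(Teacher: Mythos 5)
Your overall strategy matches the paper's: split the admissible set $J$ into uncorrupted rows $G$ and corrupted rows $T$, use the restricted quantity $\sigma^+_{q-\beta,\mathrm{min}}(\mathbf{A}_{I_1}\mathbf{P})$ from~\eqref{eq:defn_uniform_min_singval} for the clean gain, bound the $q$-quantile $\gamma_q$ through the uncorrupted \emph{rejected} rows, and normalize by $Z_J$ controlled via~\eqref{eq:defn_uniform_max_frobnorm}. Your exact one-step identity $\norm{\mathbf{e}^{k+1}}^2 = \norm{\mathbf{e}^k}^2 - (\rho_j^2 - c_j^2)/\norm{\mathbf{P}\mathbf{a}_j}^2$ is correct (it follows from $\mathbf{e}^k \in \Range(\mathbf{P})$ and $r_j = \rho_j + c_j$) and is a clean unification of the paper's Steps 1--2, which instead invoke Theorem~\ref{thm:scrk_convergence} for the clean part and expand the square for the corrupted part; your triangle-inequality estimate $\norm{\boldsymbol{\rho}_T}^2 - \norm{\mathbf{c}_T}^2 \geq -\norm{\mathbf{r}_T}^2 - 2\norm{\mathbf{r}_T}\norm{\boldsymbol{\rho}_T}$ yields quantitatively the same corruption penalty $|T|\gamma_q^2 + 2\sqrt{|T|}\,\gamma_q\,\sigma_{\mathrm{max}}(\mathbf{A}_{I_1}\mathbf{P})\norm{\mathbf{e}^k}$ as the paper's Cauchy--Schwarz step. (You also share with the paper the implicit convention that $\Null((\mathbf{A}_{I_1}\mathbf{P})_{G'}) = \Null(\mathbf{P})$ for the relevant submatrices, which is not a consequence of $\mathbf{A}$ having full rank alone but is secured with high probability in the random-matrix application; no penalty there.)

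There is, however, one genuine gap in your constant-tracking: your count of ``simultaneously uncorrupted and rejected'' rows. You use $(1-q-\beta)(m-m_0)$, which gives $\gamma_q^2 \leq \sigma_{\mathrm{max}}(\mathbf{A}_{I_1}\mathbf{P})^2\norm{\mathbf{e}^k}^2 / ((1-q-\beta)(m-m_0))$, and then combined with $|T| \leq \beta(m-m_0)$ your penalty coefficients come out as $\beta/(1-q-\beta)$ and $2\sqrt{\beta/(1-q-\beta)}$ --- \emph{not} the $\beta/(1-q)$ and $2\sqrt{\beta/(1-q)}$ of~\eqref{eq:quantilescrk_convergence_condition_1}, contrary to your claim that ``the $1-q$ factor emerg[es] from the rejected fraction.'' Since $1-q-\beta < 1-q$, your argument as written proves the lemma only under a strictly stronger hypothesis than~\eqref{eq:quantilescrk_convergence_condition_3}. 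The missing refinement, which is exactly the paper's Steps 3--4, is to keep the cardinality $|T|$ coupled between the two bounds: among the $(1-q)(m-m_0)$ rejected rows at most $\beta(m-m_0) - |T|$ are corrupted, so at least $(1-q-\beta)(m-m_0) + |T|$ are uncorrupted and rejected, whence $\gamma_q^2 \leq \sigma_{\mathrm{max}}(\mathbf{A}_{I_1}\mathbf{P})^2\norm{\mathbf{e}^k}^2/(\theta + |T|)$ with $\theta \coloneqq (1-q-\beta)(m-m_0)$. The total penalty is then $f(|T|)\,\sigma_{\mathrm{max}}(\mathbf{A}_{I_1}\mathbf{P})^2$ with $f(x) = \frac{x}{\theta + x} + \frac{2\sqrt{x}}{\sqrt{\theta + x}}$, which is increasing in $x$, so the worst case $|T| = \beta(m-m_0)$ gives $\theta + |T| = (1-q)(m-m_0)$ and recovers the stated coefficients. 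This is a fixable bookkeeping error rather than a structural flaw, but as submitted your proof does not establish the lemma with its stated constant.
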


The proof of Lemma~\ref{lem:quantilescrk_convergence_condition} follows the same strategy as~\cite{Steinerberger2023} with a minor improvement in the condition~\eqref{eq:quantilescrk_convergence_condition_3} for convergence. For completeness, we provide the full details.

\begin{proof}
Consider the iterate $\mathbf{x}^k$. Recall that $J = J(q, k)$ is the set of indices of the admissible rows that satisfy $|b_j - \mathbf{a}_j^{\tran} \mathbf{x}^k| \leq \gamma_q = q\text{-quantile}\left\{ |b_j - \mathbf{a}_j^{\tran} \mathbf{x}^k| : j \in I_1 \right\}$, with $|J| = q(m - m_0)$. Let $S \coloneqq \mathcal{C} \cap J$ be the indices of the corrupted yet admissible rows, which satisfies $0 \leq |S| \leq \beta(m - m_0)$. Recall that the row $j$ is sampled from $J$ with probability equal to $\norm{\mathbf{P} \mathbf{a}_j}^2 / Z_J$, where $Z_J \coloneqq \sum_{j \in J} \norm{\mathbf{P} \mathbf{a}_j}^2$ is the normalizing constant. Conditional on all the choices up to the $k$\textsuperscript{th} iteration, we have
\begin{align}
    \mathbb{E}_k \norm{\mathbf{x}^{k+1} - \mathbf{x}^*}^2
    &= \left[ 1 - \frac{\sum_{j \in S} \norm{\mathbf{P} \mathbf{a}_j}^2}{Z_J} \right] \mathbb{E}_{\{j \in J \setminus S\}} \norm{\mathbf{x}^{k+1} - \mathbf{x}^*}^2 \label{eq:quantilescrk_convergence_condition_pf2b} \\
    &\quad\quad+ \frac{\sum_{j \in S} \norm{\mathbf{P} \mathbf{a}_j}^2}{Z_J} \mathbb{E}_{\{j \in S\}} \norm{\mathbf{x}^{k+1} - \mathbf{x}^*}^2 \label{eq:quantilescrk_convergence_condition_pf2a},
\end{align}
where $\mathbb{E}_{\{j \in S\}}$ denotes the expectation further conditional on $j \in S$, and similarly for $\mathbb{E}_{\{j \in J \setminus S\}}$. We proceed to estimate the two summands~\eqref{eq:quantilescrk_convergence_condition_pf2b} and~\eqref{eq:quantilescrk_convergence_condition_pf2a} individually.

\textbf{Step 1: Lower bounding the improvement from selecting an uncorrupted equation.}
Conditional on sampling an admissible, uncorrupted equation $j \in J \setminus S$, the improvement is given by one step of the SCRK method applied to the row submatrix $(\mathbf{A}_{I_1} \mathbf{P})_{J \setminus S}$. Thus, by Theorem~\ref{thm:scrk_convergence},
\[
    \mathbb{E}_{\{j \in J \setminus S\}} \norm{\mathbf{x}^{k+1} - \mathbf{x}^*}^2
    \leq \left( 1 - \frac{\sigma_{\mathrm{min}}^+((\mathbf{A}_{I_1} \mathbf{P})_{J \setminus S})^2}{\sum_{j \in J \setminus S} \norm{\mathbf{P} \mathbf{a}_j}^2} \right) \cdot \norm{\mathbf{x}^k - \mathbf{x}^*}^2 .
\]
Since $|J \setminus S| \geq (q - \beta) (m - m_0)$, by using the definition of $\sigma_{q - \beta, \mathrm{min}}^+(\mathbf{A}_{I_1} \mathbf{P})$ in~\eqref{eq:defn_uniform_min_singval} together with the fact that adding rows to a matrix can only increase its minimum singular value, we obtain the following upper bound for the first term~\eqref{eq:quantilescrk_convergence_condition_pf2b}:
\begin{equation} \label{eq:quantilescrk_convergence_condition_pf4}
    \left( 1 - \frac{\sum_{j \in S} \norm{\mathbf{P} \mathbf{a}_j}^2}{Z_J} \right) \cdot \norm{\mathbf{x}^k - \mathbf{x}^*}^2
    - \frac{\sigma_{q - \beta, \mathrm{min}}^+(\mathbf{A}_{I_1} \mathbf{P})^2}{Z_J} \cdot \norm{\mathbf{x}^k - \mathbf{x}^*}^2 .
\end{equation}

\textbf{Step 2: Upper bounding the deterioration from selecting a corrupted equation.}
The second term~\eqref{eq:quantilescrk_convergence_condition_pf2a} represents the possible deterioration from selecting a corrupted yet admissible row that may take $\mathbf{x}^k$ further away from the solution $\mathbf{x}^*$. By expanding the square, it is equal to
\begin{align}\label{eq:quantilescrk_convergence_condition_pf3}
    &\frac{\sum_{j \in S} \norm{\mathbf{P} \mathbf{a}_j}^2}{Z_J} \sum_{j \in S} \frac{\norm{\mathbf{P} \mathbf{a}_j}^2}{\sum_{i \in S} \norm{\mathbf{P} \mathbf{a}_i}^2} \normbig{\mathbf{x}^k - \mathbf{x}^*
    + \frac{b_j - \mathbf{a}_j^{\tran} \mathbf{x}^k}{\norm{\mathbf{P} \mathbf{a}_j}^2} \mathbf{P} \mathbf{a}_j}^2 \nonumber\\
    &= \frac{\sum_{j \in S} \norm{\mathbf{P} \mathbf{a}_j}^2}{Z_J} \norm{\mathbf{x}^k - \mathbf{x}^*}^2
    + \frac{1}{Z_J} \sum_{j \in S} |b_j - \mathbf{a}_j^{\tran} \mathbf{x}^k|^2
    + \frac{2}{Z_J} \sum_{j \in S} (b_j - \mathbf{a}_j^{\tran} \mathbf{x}^k) (\mathbf{P} \mathbf{a}_j)^{\tran} (\mathbf{x}^k - \mathbf{x}^*) \nonumber\\
    &\leq \frac{\sum_{j \in S} \norm{\mathbf{P} \mathbf{a}_j}^2}{Z_J} \norm{\mathbf{x}^k - \mathbf{x}^*}^2
    + \frac{1}{Z_J} |S| \gamma_q^2 + \frac{2}{Z_J} \gamma_q \sqrt{|S|} \, \norm{(\mathbf{A}_{I_1} \mathbf{P})_C (\mathbf{x}^k - \mathbf{x}^*)} ,
\end{align}
where the definition of the quantile $\gamma_q$ and Cauchy-Schwarz is used for the inequality.

\textbf{Step 3: Bounding the $q$-quantile of a sample.}
Since any uncorrupted row $\mathbf{a}_j$ with $j \in I_1 \setminus \mathcal{C}$ satisfies $\mathbf{a}_j^{\tran} \mathbf{x}^* = b_j$, we have
\[
    b_j - \mathbf{a}_j^{\tran} \mathbf{x}^k = \mathbf{a}_j^{\tran}(\mathbf{x}^* - \mathbf{x}^k) = (\mathbf{P} \mathbf{a}_j)^{\tran} (\mathbf{x}^* - \mathbf{x}^k) ,
\]
recalling that $\mathbf{x}^* - \mathbf{x}^k \in \Null(\mathbf{A}_{I_0})$. Since there are at least $(1 - q)(m - m_0) - (\beta(m - m_0) - |S|) = (1 - q - \beta)(m - m_0) + |S|$ uncorrupted equations in $I_1$ whose residual is larger than $\gamma_q$, we have
\begin{align*}
    ((1 - q - \beta) (m - m_0) + |S|) \gamma_q^2
    &\leq \sum_{j \in I_1 \setminus \mathcal{C}} |b_j - \mathbf{a}_j^{\tran} \mathbf{x}^k|^2
    \leq \sum_{j \in I_1} \left| ( \mathbf{P} \mathbf{a}_j )^{\tran} (\mathbf{x}^* - \mathbf{x}^k) \right|^2 \\
    &= \norm{\mathbf{A}_{I_1} \mathbf{P} (\mathbf{x}^k - \mathbf{x}^*)}^2
    \leq \sigma_{\mathrm{max}}(\mathbf{A}_{I_1} \mathbf{P})^2 \cdot \norm{\mathbf{x}^k - \mathbf{x}^*}^2 .
\end{align*}
Therefore, the $q$-quantile of the sizes of the residuals can be bounded by
\begin{equation} \label{eq:quantilescrk_convergence_condition_pf1}
    \gamma_q
    \leq \frac{\sigma_{\mathrm{max}}(\mathbf{A}_{I_1} \mathbf{P})}{\sqrt{(m - m_0)(1 - q - \beta) + |S|}} \cdot \norm{\mathbf{x}^k - \mathbf{x}^*} .
\end{equation}

\textbf{Step 4: Conclude.}
Combining~\eqref{eq:quantilescrk_convergence_condition_pf4} and~\eqref{eq:quantilescrk_convergence_condition_pf3} with the bound on $\gamma_q$ shows that the expected relative improvement $\mathbb{E}_k \norm{\mathbf{x}^{k+1} - \mathbf{x}^*}^2 / \norm{\mathbf{x}^{k} - \mathbf{x}^*}^2$ is upper bounded by
\begin{align} \label{eq:quantilescrk_convergence_condition_pf5}
    &1 - \frac{1}{Z_J} \left( \sigma_{q - \beta, \mathrm{min}}^+(\mathbf{A}_{I_1} \mathbf{P})^2
    - \frac{|S| \gamma_q^2}{\norm{\mathbf{x}^{k} - \mathbf{x}^*}^2}  - \frac{2\gamma_q \sqrt{|S|} \, \norm{(\mathbf{A}_{I_1} \mathbf{P})_C (\mathbf{x}^k - \mathbf{x}^*)}}{\norm{\mathbf{x}^{k} - \mathbf{x}^*}^2} \right) \nonumber \\
    &\leq 1 - \frac{1}{Z_J} \left( \sigma_{q - \beta, \mathrm{min}}^+(\mathbf{A}_{I_1} \mathbf{P})^2
    - \left[ \frac{|S| \cdot \sigma_{\mathrm{max}}(\mathbf{A}_{I_1} \mathbf{P})^2}{\theta + |S|}
    + \frac{2 \sqrt{|S|} \cdot \sigma_{\mathrm{max}}(\mathbf{A}_{I_1} \mathbf{P})^2}{\sqrt{\theta + |S|}} \right] \right),
 \end{align}
where $\theta \coloneqq (m - m_0)(1 - q - \beta)$. Now, we can upper bound $Z_J$ by $Z_{q - \beta}$ from~\eqref{eq:defn_uniform_max_frobnorm}. Next, consider the function $f(x) = \frac{x}{\theta + x} + \frac{2 \sqrt{x}}{\sqrt{\theta + x}}$, and observe that $f(|S|)$ appears in the upper bound~\eqref{eq:quantilescrk_convergence_condition_pf5}. Since $f'(x) > 0$ for all $x > 0$, the upper bound is increasing in $|S|$. Because $|S| \leq \beta(m - m_0)$, we conclude that the most pessimistic bound, independent of $|S|$ and $J$ (and hence $k$), is obtained by setting $|S| = \beta(m - m_0)$, which implies that
\begin{equation*} \label{eq:quantilescrk_convergence_condition_pf6}
    \mathbb{E}_k \norm{\mathbf{x}^{k+1} - \mathbf{x}^*}^2
    \leq (1 - C_{q, \beta}) \cdot \norm{\mathbf{x}^k - \mathbf{x}^*}^2 , \quad \text{ where $C_{q, \beta}$ is defined in~\eqref{eq:quantilescrk_convergence_condition_1}.}
\end{equation*}
To ensure that the mean squared error contracts after each step, it suffices for $C_{q, \beta}$ to be positive: this is exactly secured by the condition~\eqref{eq:quantilescrk_convergence_condition_3}. By iterating, we obtain~\eqref{eq:quantilescrk_convergence_condition_4}.
\end{proof}

Note that Lemma~\ref{lem:quantilescrk_convergence_condition} only provides a sufficient condition for convergence in the worst case (see~\cite{Steinerberger2023} for further discussion). Empirically, convergence is observed for larger values of $\beta$ because the corruptions are quickly detected and trapped beyond the threshold. The dependence on $|S|$ in~\eqref{eq:quantilescrk_convergence_condition_pf5} shows that if the number of admissible, corrupted equations is small, then far less is demanded of the spectral quantities of $\mathbf{A}_{I_1} \mathbf{P}$ for the mean squared error to contract. For similar reasons, the QuantileRK method also empirically outperforms currently available theoretical convergence guarantees~\cite{HaNeReSw2022, ChJaNeRe2022}.

\subsection{Proof of Theorem~\ref{thm:quantilescrk_convergence}}
\label{subsec:quantilescrk_convergence_proof}

To prove Theorem~\ref{thm:quantilescrk_convergence}, our strategy will be to show that the ratio of $\sigma_{\mathrm{max}}(\mathbf{A}_{I_1} \mathbf{P})$ and $\sigma_{q - \beta, \mathrm{min}}^+(\mathbf{A}_{I_1} \mathbf{P})$ is of the same order with high probability. Together with the condition~\eqref{eq:quantilescrk_convergence_condition_3} for convergence in Lemma~\ref{lem:quantilescrk_convergence_condition}, this implies that the QuantileSCRK method will efficiently converge if the proportion of corruptions is small enough.

First, we show that $\sigma_{q - \beta, \mathrm{min}}^+(\mathbf{A}_{I_1} \mathbf{P})$ can be lower bounded with high probability as long as the effective aspect ratio $(m - m_0) / (n - m_0)$ is tall enough. This is proved using a similar technique as~\cite[Proposition~3.4]{HaNeReSw2022}.

\begin{lemma} \label{lem:subgauss_uniform_min_singval}
Let $\mathbf{P}$ be an orthogonal projection onto a fixed $(n - m_0)$-dimensional subspace, $\alpha \in (0, 1]$, and $\mathbf{A}_{I_1} \in \reals^{(m - m_0) \times n}$ be a random matrix that satisfies Assumptions~\ref{asmp:A1} and~\ref{asmp:A2}. Then there exist absolute constants $C, \theta > 0$ such that if
\begin{equation} \label{eq:subgauss_uniform_min_singval_1}
    \frac{m - m_0}{n - m_0} \geq \frac{24}{\alpha} \log \left( \frac{36 \theta (1 + C K^2)}{\alpha^{3/2}} \right) ,
\end{equation}
then with probability at least $1 - 3e^{-\alpha (m - m_0) / 24}$,
\begin{equation} \label{eq:subgauss_uniform_min_singval_2}
    \inf_{\substack{T \subseteq I_1 \\ |T| = \alpha (m - m_0)}} \sigma_{\mathrm{min}}^+((\mathbf{A}_{I_1} \mathbf{P})_T) \geq \frac{\alpha^{3/2}}{32} \sqrt{m - m_0} .
\end{equation}
\end{lemma}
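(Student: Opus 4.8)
The plan is to reduce the projected matrix to a thinner random matrix, establish a pointwise small-ball estimate, and then upgrade it to a uniform bound over the sphere via a net whose cardinality is absorbed by the aspect-ratio hypothesis. Throughout write $M := m - m_0$ and $N := n - m_0$. Fix a matrix $\bar{\mathbf{Q}} \in \reals^{n \times N}$ whose columns form an orthonormal basis of $\Range(\mathbf{P})$, and set $\mathbf{B} := \mathbf{A}_{I_1} \bar{\mathbf{Q}} \in \reals^{M \times N}$, with rows $\mathbf{b}_j = \bar{\mathbf{Q}}^{\tran} \mathbf{a}_j$. Applying Lemma~\ref{lem:proj_matrix_singvals} to each row-submatrix gives $\sigma_{\mathrm{min}}^+((\mathbf{A}_{I_1} \mathbf{P})_T) = \sigma_{\mathrm{min}}^+(\mathbf{B}_T)$ for every $T$, so it suffices to lower bound $\inf_{|T| = \alpha M} \sigma_{\mathrm{min}}^+(\mathbf{B}_T)$. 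The rows of $\mathbf{B}$ remain mean-zero, isotropic, and $K$-subgaussian (Assumption~\ref{asmp:A1}), and the relevant consequence of Assumption~\ref{asmp:A2} transfers too: for any unit $\mathbf{v} \in \reals^N$, the variable $\mathbf{b}_j^{\tran} \mathbf{v} = (\bar{\mathbf{Q}} \mathbf{v})^{\tran} \mathbf{a}_j$ is a one-dimensional marginal of $\mathbf{a}_j$ along the unit vector $\bar{\mathbf{Q}} \mathbf{v}$, so by log-concavity (preserved under linear images) or a Rogozin-type convolution bound in the independent bounded-density case, its density is bounded by an absolute constant. Hence there is $\theta > 0$ with $\mathbb{P}(|\mathbf{b}_j^{\tran} \mathbf{v}| \leq t) \leq \theta t$ for all $t \geq 0$ and all unit $\mathbf{v}$. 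Since the argument of the logarithm in~\eqref{eq:subgauss_uniform_min_singval_1} exceeds $36$, the hypothesis forces $\alpha M \geq N$, so each $\mathbf{B}_T$ is tall and $\sigma_{\mathrm{min}}^+(\mathbf{B}_T) = \inf_{\norm{\mathbf{v}} = 1} \norm{\mathbf{B}_T \mathbf{v}}$.

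Set $t := \alpha / (4\theta)$, and for a unit $\mathbf{v}$ let $N_{\mathbf{v}}(t) := \#\{ j : |\mathbf{b}_j^{\tran} \mathbf{v}| \leq t \}$. The minimizing $T$ selects the $\alpha M$ rows with smallest $|\mathbf{b}_j^{\tran} \mathbf{v}|$, and at most $N_{\mathbf{v}}(t)$ of these fall below $t$, so $\norm{\mathbf{B}_T \mathbf{v}}^2 \geq (\alpha M - N_{\mathbf{v}}(t)) t^2$. The small-ball bound gives $\mathbb{E} N_{\mathbf{v}}(t) \leq M \theta t = \alpha M / 4$, and since $N_{\mathbf{v}}(t)$ is a sum of $M$ independent indicators, a multiplicative Chernoff bound yields $\mathbb{P}(N_{\mathbf{v}}(t) \geq \alpha M / 2) \leq e^{-\alpha M / 12}$; on its complement $\norm{\mathbf{B}_T \mathbf{v}} \geq t \sqrt{\alpha M / 2}$. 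Taking a $\delta$-net $\mathcal{N}$ of the unit sphere of $\reals^N$ with $|\mathcal{N}| \leq (1 + 2/\delta)^N$ (Lemma~\ref{lem:subspace_unitsphere_covering}) and union bounding, this lower bound holds at every net point with probability at least $1 - |\mathcal{N}| e^{-\alpha M / 12}$.

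To pass from $\mathcal{N}$ to all unit $\mathbf{v}$, note that $g(\mathbf{v}) := \inf_{|T| = \alpha M} \norm{\mathbf{B}_T \mathbf{v}}$ is $\sigma_{\mathrm{max}}(\mathbf{B})$-Lipschitz, and on the event of Lemma~\ref{lem:subgauss_minmax_singvals} (via Lemma~\ref{lem:proj_matrix_singvals}) we have $\sigma_{\mathrm{max}}(\mathbf{B}) = \sigma_{\mathrm{max}}(\mathbf{A}_{I_1}\mathbf{P}) \lesssim (1 + CK^2) \sqrt{M}$. Choosing $\delta \asymp \alpha^{3/2} / (\theta(1 + CK^2))$ makes $\sigma_{\mathrm{max}}(\mathbf{B}) \delta$ at most half the net lower bound $t \sqrt{\alpha M / 2} = \tfrac{\alpha^{3/2}}{4\sqrt{2}\,\theta} \sqrt{M}$, so after adjusting constants $g(\mathbf{v}) \geq \tfrac{\alpha^{3/2}}{32} \sqrt{M}$ for all $\mathbf{v}$. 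With this $\delta$ one has $\log |\mathcal{N}| \leq N \log\!\big( 36 \theta (1 + CK^2)/\alpha^{3/2} \big)$, and the aspect-ratio hypothesis $M/N \geq \tfrac{24}{\alpha} \log(\cdots)$ precisely guarantees $\log|\mathcal{N}| \leq \alpha M / 24$, so the union bound succeeds with probability at least $1 - e^{-\alpha M / 24}$. Intersecting with the singular-value event of Lemma~\ref{lem:subgauss_minmax_singvals} (failure at most $2 e^{-\alpha M/24}$ for the relevant choice of $s$) yields the claimed probability $1 - 3 e^{-\alpha M / 24}$.

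The crux is the transfer step: the net must be fine enough (small $\delta$) that the Lipschitz perturbation $\sigma_{\mathrm{max}}(\mathbf{B})\delta$ does not swamp the $\Theta(\alpha^{3/2}\sqrt{M})$ pointwise bound, yet coarse enough that $\log|\mathcal{N}| = O(N \log(1/\alpha))$ stays below the Chernoff exponent $\alpha M/12$; it is exactly this tension that dictates the form of~\eqref{eq:subgauss_uniform_min_singval_1}. The other delicate ingredient is the anticoncentration constant $\theta$, whose existence is the only place Assumption~\ref{asmp:A2} enters and which must be shown to survive the rotation by $\bar{\mathbf{Q}}$.
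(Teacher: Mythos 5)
Your proposal is correct and follows essentially the same route as the paper's proof: a net over the $(n-m_0)$-dimensional unit sphere of $\Range(\mathbf{P})$, a per-row small-ball estimate from Assumption~\ref{asmp:A2} (Carbery--Wright / bounded-density anticoncentration) combined with a Chernoff bound on the count of ``flat'' rows that is automatically uniform over all $T$, a union bound absorbed by the aspect-ratio hypothesis~\eqref{eq:subgauss_uniform_min_singval_1}, and the $\sigma_{\mathrm{max}}(\mathbf{A}_{I_1}\mathbf{P})$ bound from Lemma~\ref{lem:subgauss_minmax_singvals} to pass from the net to the full sphere. The only differences are cosmetic: you rotate to the thin matrix $\mathbf{B} = \mathbf{A}_{I_1}\bar{\mathbf{Q}}$ (correctly checking that the anticoncentration constant survives the rotation) and phrase the net-transfer as a Lipschitz bound on $g(\mathbf{v}) = \inf_T \norm{\mathbf{B}_T \mathbf{v}}$, whereas the paper nets $\Range(\mathbf{P}) \cap \mathbb{S}^{n-1}$ directly and uses the reverse triangle inequality per submatrix --- the same estimate in different clothing, with matching constants throughout.
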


\begin{proof}
Recall that $\mathbb{S}^{n-1} = \{ \mathbf{x} \in \reals^n : \norm{\mathbf{x}} = 1 \}$ denotes the unit sphere. By Lemma~\ref{lem:subspace_unitsphere_covering}, we can fix an $\epsilon$-net $\mathcal{N}$ of $\Range(\mathbf{P}) \cap \mathbb{S}^{n-1}$ with cardinality $|\mathcal{N}| \leq (3 / \epsilon)^{n - m_0}$ for some $\epsilon \in (0, 1]$ to be chosen later. Fix any $T \subseteq I_1$ with $|T| = \alpha (m - m_0)$. Since for any $\mathbf{z} \in \Range(\mathbf{P}) \cap \mathbb{S}^{n-1}$, there exists $\mathbf{x} \in \mathcal{N}$ such that $\norm{\mathbf{z} - \mathbf{x}} \leq \epsilon$, using the reverse triangle inequality and $\norm{(\mathbf{A}_{I_1} \mathbf{P})_T} \leq \norm{\mathbf{A}_{I_1} \mathbf{P}}$ implies that
\begin{equation} \label{eq:subgauss_uniform_min_singval_pf1}
    \sigma_{\mathrm{min}}^+((\mathbf{A}_{I_1} \mathbf{P})_T)
    \geq \inf_{\mathbf{z} \in \Range(\mathbf{P}) \cap \mathbb{S}^{n-1}} \norm{(\mathbf{A}_{I_1} \mathbf{P})_T) \mathbf{z}}
    \geq \inf_{\mathbf{x} \in \mathcal{N}} \norm{(\mathbf{A}_{I_1} \mathbf{P})_T) \mathbf{z}} - \epsilon \norm{\mathbf{A}_{I_1} \mathbf{P}} .
\end{equation}
Firstly, by Lemma~\ref{lem:subgauss_minmax_singvals} (with $s = 1$), we have that with probability at least $1 - 2e^{-(m - m_0)}$,
\begin{equation} \label{eq:subgauss_uniform_min_singval_pf2}
    \norm{\mathbf{A}_{I_1} \mathbf{P}} \leq (1 + CK^2) \sqrt{m - m_0} .
\end{equation}
Next, our goal is to define an event $\mathcal{E}$ on which a good bound for $\inf_{\mathbf{x} \in \mathcal{N}} \norm{(\mathbf{A}_{I_1} \mathbf{P})_T) \mathbf{z}}$ that is independent of $T$ holds. More precisely, for every $j \in I_1$ and $\mathbf{x} \in \mathcal{N}$, define the ``bad'' event
\[
    \mathcal{E}_j^{\mathbf{x}} \coloneqq \left\{ |\innprod{\mathbf{a}_j}{\mathbf{x}}| \leq \alpha / (4 \theta) \right\} ,
\]
where $\theta$ is some constant to be specified later. Let $\mathcal{E}^{\mathbf{x}}$ be the ``good'' event where less than $\alpha (m - m_0) / 2$ of the events $(\mathcal{E}_j^{\mathbf{x}})_{j \in I_1}$ occur, and $\mathcal{E} \coloneqq \bigcap_{\mathbf{x} \in \mathcal{N}} \mathcal{E}^{\mathbf{x}}$. Observe that $\innprod{\mathbf{P} \mathbf{a}_j}{\mathbf{x}} = \innprod{\mathbf{a}_j}{\mathbf{P} \mathbf{x}} = \innprod{\mathbf{a}_j}{\mathbf{x}}$ since $\mathbf{x} \in \Range(\mathbf{P})$. Therefore, on $\mathcal{E}$, at least half of the rows of $(\mathbf{A}_{I_1} \mathbf{P})_T$ have nontrivial correlation with any $\mathbf{x} \in \mathcal{N}$, which implies that
\begin{equation} \label{eq:subgauss_uniform_min_singval_pf3}
    \inf_{\mathbf{x} \in \mathcal{N}} \norm{(\mathbf{A}_{I_1} \mathbf{P})_T \mathbf{x}}
    = \inf_{\mathbf{x} \in \mathcal{N}} \sqrt{\sum_{j \in T} |\innprod{\mathbf{P} \mathbf{a}_j}{\mathbf{x}}|^2}
    \geq \sqrt{\frac{\alpha (m - m_0)}{2} \cdot \frac{\alpha^2}{16 \theta^2}}
    \geq \frac{\alpha^{3/2}}{6 \theta} \sqrt{m - m_0} .
\end{equation}
To balance~\eqref{eq:subgauss_uniform_min_singval_pf2} and~\eqref{eq:subgauss_uniform_min_singval_pf3}, we choose $\epsilon = \alpha^{3/2} / (12 \theta (1 + CK^2))$. Therefore, if both events $\mathcal{E}$ and~\eqref{eq:subgauss_uniform_min_singval_pf2} hold, then~\eqref{eq:subgauss_uniform_min_singval_pf1} implies that the desired bound~\eqref{eq:subgauss_uniform_min_singval_2} holds.

It remains to bound the probability of the event $\mathcal{E}$, for which we will combine an anti-concentration result with a Chernoff bound.
By using either~\cite[Theorem~1.2]{RudelsonVershynin2015} if the row $\mathbf{a}_j$ has independent entries with bounded densities (with $\theta \geq 2 \sqrt{2}$), or~\cite[Theorem~8]{CarberyWright2001} if $\mathbf{a}_j$ has a log-concave distribution (increasing the value of $\theta$ based on the absolute constant in this result), we deduce that $\prob{\mathcal{E}_j^{\mathbf{x}}} \leq \alpha / 4$ for all $j \in I_1$. Hence, a standard Chernoff bound implies that $\prob{\mathcal{E}^{\mathbf{x}}} \geq 1 - e^{-\alpha (m - m_0) / 12}$ for all $\mathbf{x} \in \mathcal{N}$, and a union bound shows that $\mathcal{E}$ fails to hold with probability less than
\[
    |\mathcal{N}| \cdot \exp\left( \frac{-\alpha (m - m_0)}{12} \right)
    \leq \exp\left( (n - m_0) \log\left( \frac{3}{\epsilon} \right) - \frac{\alpha(m - m_0)}{12} \right)
    \leq e^{-\alpha(m - m_0) / 24} ,
\]
where the condition~\eqref{eq:subgauss_uniform_min_singval_1} is used for the final inequality. Combining this with the probability bound for~\eqref{eq:subgauss_uniform_min_singval_pf2} to hold completes the proof.
\end{proof}

Next, the following lemma bounds $Z_{q - \beta}$ from above with high probability.

\begin{lemma} \label{lem:uniform_max_frobnorm}
Let $\mathbf{P}$ be an orthogonal projection onto a fixed $(n - m_0)$-dimensional subspace, $\alpha \in (0, 1]$, and $\mathbf{A}_{I_1} \in \reals^{(m - m_0) \times n}$ be a random matrix that satisfies Assumptions~\ref{asmp:A1} and~\ref{asmp:A2}. Then there exists an absolute constant $c > 0$ such that with probability at least $1 - e^{-c \alpha (m - m_0) / K^4}$,
\begin{equation} \label{eq:uniform_max_frobnorm_1}
    \sup_{\substack{T \subseteq I_1 \\ |T| = \alpha(m - m_0)}} \norm{(\mathbf{A}_{I_1} \mathbf{P})_T}_F^2
    \leq \left( 2 + \frac{K^4}{c} \log\left( \frac{e}{\alpha} \right) \right) \alpha (m - m_0)(n - m_0) .
\end{equation}
\end{lemma}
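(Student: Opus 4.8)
The plan is to recognize $Z_\alpha$ as the sum of the $\alpha(m-m_0)$ largest values among $\{\norm{\mathbf{P}\mathbf{a}_j}^2 : j \in I_1\}$, and to bound it via a union bound over all row subsets of this size, controlling each fixed subset by Bernstein's inequality. The two facts I would invoke at the outset are: the computation from the proof of Lemma~\ref{lem:projected_frobenius_concentration} that $\mathbb{E}\norm{\mathbf{P}\mathbf{a}_j}^2 = \mathrm{tr}(\mathbf{P}) = n - m_0$ for every $j \in I_1$; and Lemma~\ref{lem:projected_rownorm_subgauss}, which gives that $\norm{\mathbf{P}\mathbf{a}_j}$ is $O(K\sqrt{n - m_0})$-subgaussian. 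Consequently the centered squares $Y_j \coloneqq \norm{\mathbf{P}\mathbf{a}_j}^2 - (n - m_0)$ are independent, mean-zero, and subexponential with $\norm{Y_j}_{\psi_1} \le C_1 K^2 (n - m_0)$ for an absolute constant $C_1$.

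For a fixed $T \subseteq I_1$ with $|T| = \alpha(m - m_0)$, I would apply the one-sided Bernstein inequality~\cite[Theorem~2.8.1]{Vershynin2018} to $\sum_{j\in T} Y_j$. Substituting $\sum_{j\in T}\norm{Y_j}_{\psi_1}^2 \le \alpha(m-m_0)\,C_1^2 K^4 (n-m_0)^2$ and $\max_j\norm{Y_j}_{\psi_1} \le C_1 K^2 (n-m_0)$, and writing the deviation as $t = \lambda\,\alpha(m-m_0)(n-m_0)$, the factor $(n-m_0)$ cancels throughout and one obtains
\[
    \mathbb{P}\left( \sum_{j\in T} \norm{\mathbf{P}\mathbf{a}_j}^2 \ge (1 + \lambda)\,\alpha(m-m_0)(n-m_0) \right)
    \le \exp\left( -c'\,\alpha(m-m_0)\,\min\left\{ \tfrac{\lambda^2}{K^4} , \tfrac{\lambda}{K^2} \right\} \right)
\]
for an absolute constant $c' > 0$.

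Then I would union bound over the $\binom{m-m_0}{\alpha(m-m_0)} \le (e/\alpha)^{\alpha(m-m_0)}$ subsets of the prescribed size, whose logarithm contributes an entropy term $\alpha(m-m_0)\log(e/\alpha)$ to the exponent. Choosing $\lambda = 1 + \frac{K^4}{c}\log(e/\alpha)$ for a sufficiently small absolute $c$ makes $\lambda \ge K^2$, so the Bernstein minimum equals its linear term $\lambda/K^2$; using that the rows are isotropic and hence $K \ge c_0$ for an absolute $c_0 > 0$, the Bernstein exponent $c'\alpha(m-m_0)\lambda/K^2$ then exceeds the entropy by a margin of order $\alpha(m-m_0)/K^4$. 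Since $1 + \lambda = 2 + \frac{K^4}{c}\log(e/\alpha)$, this produces exactly the bound~\eqref{eq:uniform_max_frobnorm_1}, holding with probability at least $1 - e^{-c\alpha(m-m_0)/K^4}$ after adjusting $c$.

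The main obstacle is balancing the combinatorial entropy against the Bernstein tail uniformly in $\alpha$: the entropy $\alpha(m-m_0)\log(e/\alpha)$ blows up as $\alpha \to 0$, so the deviation threshold $\lambda$ must grow like $\log(e/\alpha)$. Because $\norm{\mathbf{P}\mathbf{a}_j}^2$ is merely subexponential (only its square root being subgaussian), $\lambda$ cannot be taken as small as a subgaussian union bound would permit; forcing the linear branch of the Bernstein minimum to dominate the entropy is precisely what introduces the $\frac{K^4}{c}\log(e/\alpha)$ factor, and one must check that the chosen $\lambda$ stays in the linear regime across the whole range $\alpha \in (0,1]$.
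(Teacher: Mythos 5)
Your proposal is correct and follows essentially the same route as the paper: a per-subset tail bound for $\norm{(\mathbf{A}_{I_1}\mathbf{P})_T}_F^2$ (the paper invokes Lemma~\ref{lem:projected_frobenius_concentration} applied to the submatrix as a black box, which is exactly the Bernstein argument on the centered subexponential variables $\norm{\mathbf{P}\mathbf{a}_j}^2 - (n-m_0)$ that you unroll), followed by a union bound over the $\binom{m-m_0}{\alpha(m-m_0)} \le (e/\alpha)^{\alpha(m-m_0)}$ subsets and the identical choice of deviation parameter $\epsilon = 1 + \frac{K^4}{c}\log(e/\alpha)$, forcing the linear Bernstein branch to beat the entropy. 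Your extra checks --- that $\lambda \ge K^2$ keeps you in the linear regime uniformly over $\alpha \in (0,1]$, and that isotropy gives $K \gtrsim 1$ so the $1/K^2$ and $1/K^4$ normalizations are interchangeable up to constants --- are sound and if anything slightly more careful than the paper's write-up.
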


\begin{proof}
For all fixed $T \subseteq I_1$ with $|T| = \alpha(m - m_0)$, Lemma~\ref{lem:projected_frobenius_concentration} applied to the submatrix $(\mathbf{A}_{I_1} \mathbf{P})_T$ implies that there exists an absolute constant $c > 0$ such that for all $\epsilon > 0$,
\[
    \prob{\norm{(\mathbf{A}_{I_1} \mathbf{P})_T}_F^2 \geq (1 + \epsilon) \alpha (m - m_0)(n - m_0)} \leq e^{-c \min \{ \epsilon^2 , \epsilon \} \alpha (m - m_0) / K^4} .
\]
Hence, by a union bound over all $\binom{m - m_0}{\alpha(m - m_0)} < e^{\alpha (m - m_0) \log(e / \alpha)}$ such subsets $T$, we deduce that the probability that the event~\eqref{eq:uniform_max_frobnorm_1} does not hold is not greater than $\exp\left\{ -\alpha(m - m_0) \left( \frac{c \epsilon}{K^4} - \log\left( \frac{e}{\alpha} \right) \right) \right\}$ for $\epsilon \geq 1$.
In particular, choosing $\epsilon = 1 + \frac{K^4}{c} \log\left( \frac{e}{\alpha} \right)$ leads to the claimed probability guarantee.
\end{proof}

By combining our tail bounds for $\sigma_{\mathrm{max}}(\mathbf{A}_{I_1} \mathbf{P})$ and $\sigma_{q - \beta, \mathrm{min}}^+(\mathbf{A}_{I_1} \mathbf{P})$ as well as $Z_{q - \beta}$, we can now prove Theorem~\ref{thm:quantilescrk_convergence}.

\begin{proof}[Proof of Theorem~\ref{thm:quantilescrk_convergence}]
In this proof, the various constants of the form $c_1, C_1, \dots$ that appear only depend on $K$. By Lemma~\ref{lem:subgauss_minmax_singvals}, $\sigma_{\mathrm{max}}(\mathbf{A}_{I_1} \mathbf{P}) \leq C_1 \sqrt{m - m_0}$ with probability at least $1 - 2e^{-c_1 (m - m_0)}$.
By Lemma~\ref{lem:subgauss_uniform_min_singval}, $\sigma_{q - \beta, \mathrm{min}}^+(\mathbf{A}_{I_1} \mathbf{P}) \geq C_2 (q - \beta)^{3/2} \sqrt{m - m_0}$ with probability at least $1 - 3e^{-c_2 (q - \beta) (m - m_0)}$, given that the condition~\eqref{eq:quantilescrk_convergence_1} is satisfied.
Therefore, if both of these events hold, then
\[
    \frac{\sigma^+_{q - \beta, \mathrm{min}}(\mathbf{A}_{I_1} \mathbf{P})^2}{\sigma_{\mathrm{max}}(\mathbf{A}_{I_1} \mathbf{P})^2} \geq \left( \frac{C_2}{C_1} \right)^2 (q - \beta)^3 .
\]
Hence, by Lemma~\ref{lem:quantilescrk_convergence_condition} we deduce that the QuantileSCRK algorithm converges if
\[
    \left( \frac{C_2}{C_1} \right)^2 (q - \beta)^3 > \frac{\beta}{1 - q} + 2 \sqrt{\frac{\beta}{1 - q}} .
\]
Since $q$ is fixed and the right-hand side can be made arbitrarily small by decreasing $\beta$, it follows that this condition is satisfied as long as $\beta$ is sufficiently small. Finally, Lemma~\ref{lem:uniform_max_frobnorm} implies that $Z_{q - \beta} \leq C_{q, \beta} (m - m_0)(n - m_0)$ with probability at least $1 - e^{-c_3 (q - \beta) (m - m_0)}$ for some constant $C_{q, \beta} > 0$ that only depends on $q$, $\beta$, and $K$. If all of these events hold, then Lemma~\ref{lem:quantilescrk_convergence_condition} implies that QuantileSCRK converges with the claimed rate~\eqref{eq:quantilescrk_convergence_3}. The proof is completed after simplifying the probability bound.
\end{proof}

\section{Numerical experiments} \label{sec:experiments}

In this section, we present numerical experiments that demonstrate various features of the SCRK method (Algorithm~\ref{alg:scrk_method}) and the QuantileSCRK method (Algorithm~\ref{alg:quantilescrk_method}). For the plots with random, simulated data that follow, the lines represent the median over 200 trials, and the shaded regions indicate the 0.1- and 0.9-quantiles around the corresponding medians. The log relative error refers to the quantity $\log(\norm{\mathbf{x}^k - \mathbf{x}^*} / \norm{\mathbf{x}^0 - \mathbf{x}^*})$. The experiments were performed on a MacBook Air M1 with 8GB RAM using Python 3.11.

\subsection{SCRK method for systems with correlated rows}

In \textbf{Figure~\ref{fig:scrk_varym0_coherent_uniform}}, we compare the performance of the SCRK method on a system with highly correlated rows for various sizes $m_0$ of $I_0$. It is known that RK performs poorly in this setting~\cite{NeedellTropp2014, NeedellWard2013}. The entries of $\mathbf{A} \in \reals^{2,000 \times 1,000}$ are independently and uniformly distributed on $[0.9, 1.1]$, and the solution $\mathbf{x}^* \in \reals^{1,000}$ is a standard Gaussian vector. The same initial iterate starting in the solution space corresponding to the biggest block (i.e.\ $m_0 = 200$) is used for each variation.

As predicted by Corollary~\ref{cor:scrk_coherence}, the SCRK method with $I_0$ as the first $m_0$ rows of $\mathbf{A}$ outperforms RK for any $m_0 \geq 1$ since the pairwise row correlations of $\mathbf{A}$ are bounded from below. Moreover, increasing $m_0$ increases the rate of convergence (see Theorem~\ref{thm:subgauss_scrk_convergence}). However, since increasing $m_0$ leads to heavier iterations and a higher initial cost from computing $\mathbf{A}_{I_0}^{\dagger}$ (see Remark~\ref{rmk:complexity}), the optimal block size for a given target error and time budget is not necessarily the largest as highlighted by Figure~\ref{fig:scrk_varym0_coherent_uniform} (right).


\begin{figure}[!htb]
    \centering
    \begin{subfigure}[b]{0.505\textwidth}
        \centering
        \includegraphics[width=\textwidth]{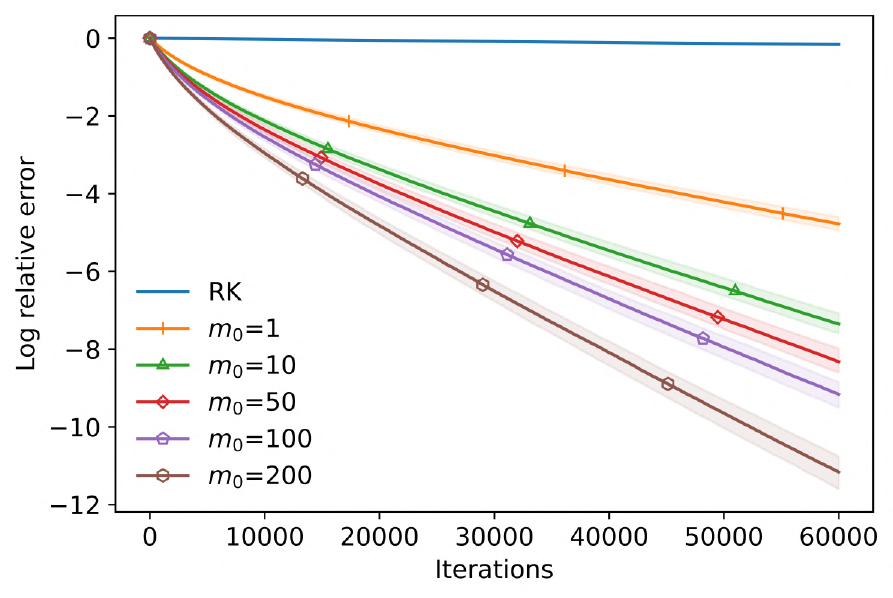}
    \end{subfigure}
    \hfill
    \begin{subfigure}[b]{0.485\textwidth}
        \centering
        \includegraphics[width=\textwidth, trim={0.7cm 0 0 0}, clip]{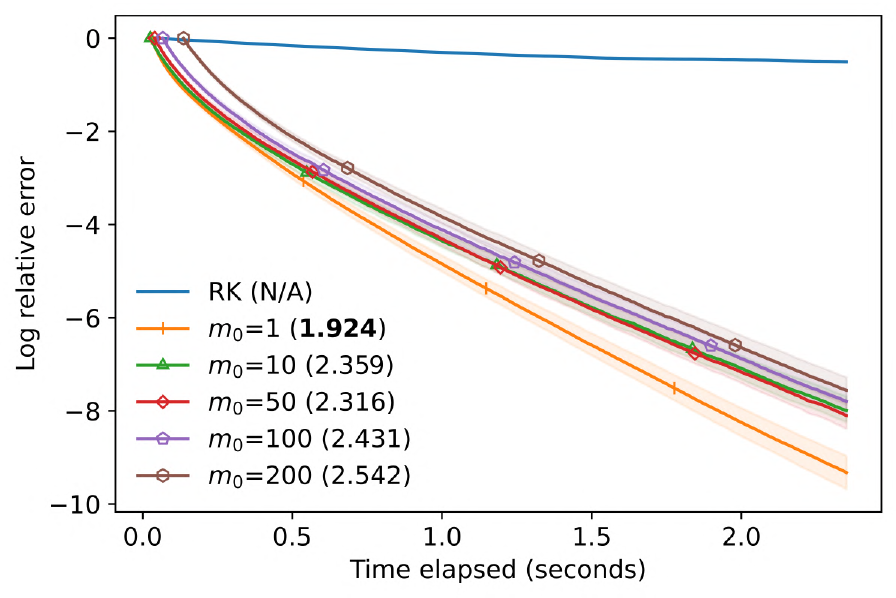}
    \end{subfigure}
    \caption{
    Performance of SCRK on a system with highly correlated rows for various sizes $m_0$ of $\mathbf{A}_{I_0}$.
    \textit{(Left)} Log relative error at each iteration.
    \textit{(Right)} Log relative error against time elapsed, including the initial cost of precomputing $\mathbf{A}_{I_0}^{\dagger}$ for each $m_0$. The time taken to reach a log relative error of less than $-8$ is reported in brackets (N/A indicates that this was not reached in 30 seconds).
    }
    \label{fig:scrk_varym0_coherent_uniform}
\end{figure}

\begin{figure}[!htb]
    \centering
    \begin{subfigure}[b]{0.505\textwidth}
        \centering
        \includegraphics[width=\textwidth]{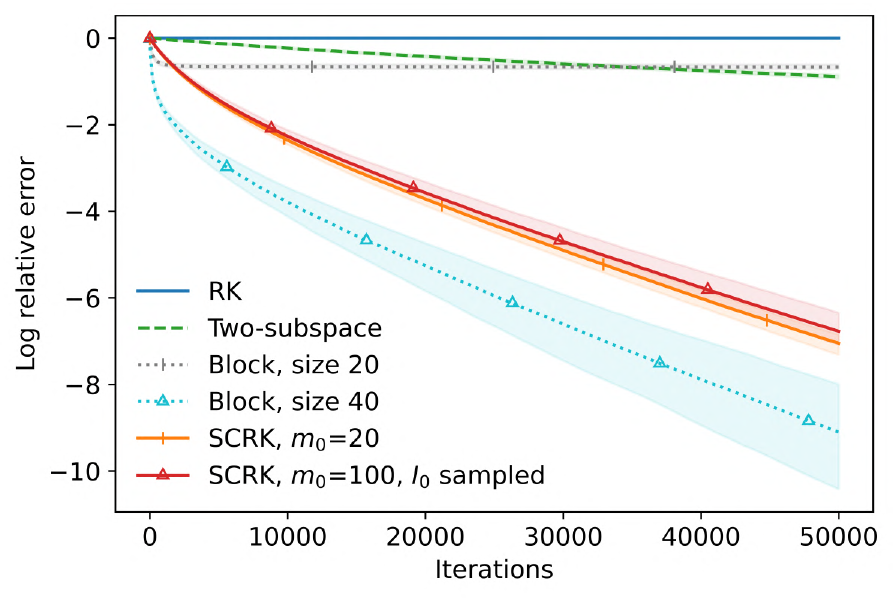}
    \end{subfigure}
    \hfill
    \begin{subfigure}[b]{0.485\textwidth}
        \centering
        \includegraphics[width=\textwidth, trim={0.7cm 0 0 0}, clip]{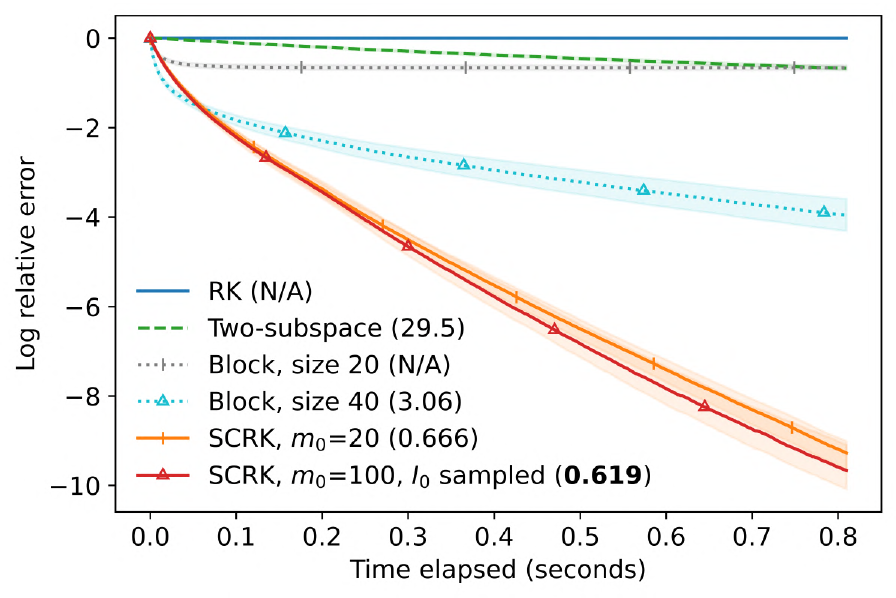}
    \end{subfigure}
    \caption{
    Performance of SCRK on a coherent system with low-rank structure using a ``perfect'' block (with $m_0 = 20$) and a randomly sampled block (with $m_0 = 100$) as described in the main text. The two-subspace Kaczmarz method~\cite{NeedellWard2013} and randomized block Kaczmarz method~\cite{NeedellTropp2014} (with two block sizes) are also included.
    \textit{(Left)} Log relative error at each iteration.
    \textit{(Right)} Log relative error against time elapsed, not including the initial costs of precomputing pseudoinverses for SCRK and block Kaczmarz. The time taken to reach a log relative error of less than $-8$ is reported in the brackets (N/A indicates that this was not reached in 30 seconds).
    }
    \label{fig:scrk_super_coherence}
\end{figure}

\subsection{SCRK method for systems with low-rank structure}

In \textbf{Figure~\ref{fig:scrk_super_coherence}}, we consider the performance of the SCRK method on a structured matrix $\mathbf{A} \in \reals^{2,000 \times 1,000}$, constructed as in Example~\ref{eg:scrk_structure_frobnorm}. The first $r = 20$ rows of $\mathbf{A}$ are normalized standard Gaussian vectors. The remaining $m - r$ rows $(\mathbf{a}_j)_{j > r}$ are equal to $\mathbf{a}_j \coloneqq (1 - \epsilon) \mathbf{a}'_j + \epsilon \mathbf{c}_j$, where $\epsilon = 0.1$, $\mathbf{a}'_j$ is sampled from $\{ \mathbf{a}_1, \dots, \mathbf{a}_r \}$, and $\mathbf{c}_j$ is sampled from $\mathrm{span}(\{ \mathbf{a}_1, \dots, \mathbf{a}_r \})^{\perp}$ and normalized; i.e.\ $\mathbf{a}_j$ mainly consists of a row from the special top block plus some noise in the orthogonal direction. The solution $\mathbf{x}^* \in \reals^{1,000}$ is a standard Gaussian vector.

The SCRK algorithm is run with two choices of $I_0$: the first uses the ``perfect'' block of size $m_0 = 20$ with the rows $\{ \mathbf{a}_1, \dots, \mathbf{a}_r \}$ that generate the coherence structure. The second variant uses a block of $m_0 = 5r = 100$ rows of $\mathbf{A}$ sampled (without replacement) uniformly at random. This represents the case where the source of coherence is unknown, but sampling the effective rank of $\mathbf{A}$ (with an appropriate oversampling factor) should find a good block $\mathbf{A}_{I_0}$ as predicted by Theorem~\ref{thm:sampling_guarantees}.
Indeed, Figure~\ref{fig:scrk_super_coherence} shows that both choices of $I_0$ converge effectively: the dramatic improvement in the per-iteration convergence rate of SCRK over RK shown by the left plot is explained by the (inverse) scaled condition number $\sigma_{\mathrm{min}}^+(\mathbf{A}_{I_1} \mathbf{P}) / \norm{\mathbf{A}_{I_1} \mathbf{P}}_F = 9.33 \times 10^{-3}$ of $\mathbf{A}_{I_1} \mathbf{P}$ with $m_0 = 20$ (and similarly $9.56 \times 10^{-3}$ with $m_0 = 100$) being significantly larger than the corresponding quantity $\sigma_{\mathrm{min}}(\mathbf{A}) / \norm{\mathbf{A}}_F = 3.29 \times 10^{-5}$ for $\mathbf{A}$ (see Section~\ref{subsec:scrk_structure_discussion}).

The two-subspace Kaczmarz method~\cite{NeedellWard2013} and randomized block Kaczmarz method \cite{NeedellTropp2014} (using equally-sized blocks of size 20 and 40 chosen uniformly at random, and precomputed pseudoinverses) are also included. The same initial iterate as SCRK with $m_0 = 20$ is used. It is known that these algorithms perform well in systems with highly correlated rows, such as the one previously considered in Figure~\ref{fig:scrk_varym0_coherent_uniform}. However, Figure~\ref{fig:scrk_super_coherence} shows that the effectiveness of two-subspace Kaczmarz and block Kaczmarz with blocks of size 20 that are ``too small'' is impeded by the coherence structure of $\mathbf{A}$.

On the other hand, block Kaczmarz with blocks of size 40 that are ``large enough'' (relative to $r = 20$ for this problem) converges effectively. While Figure~\ref{fig:scrk_super_coherence} (left) shows that it converges with a greater per-iteration rate than SCRK (since it effectively uses 40 new rows in each iteration instead of just one), Figure~\ref{fig:scrk_super_coherence} (right) shows that the lighter iterations of the SCRK method actually makes it more efficient on a time basis.

\subsection{SCRK method for noisy systems}

In \textbf{Figure~\ref{fig:scrk_noisy_horizon}}, we consider the performance of the SCRK algorithm on a noisy system to demonstrate the validity of the error horizon predicted by Theorem~\ref{thm:noisy_scrk_convergence}. The rows of $\mathbf{A} \in \reals^{300 \times 100}$ are independent normalized standard Gaussian vectors, the solution $\mathbf{x}^* \in \reals^{100}$ is a standard Gaussian vector, and the entries of the noise vector $\mathbf{r}$ are independently and uniformly distributed on $[-0.01, 0.01]$.

\begin{figure}[!htb]
    \centering
    \includegraphics[width=0.495\textwidth]{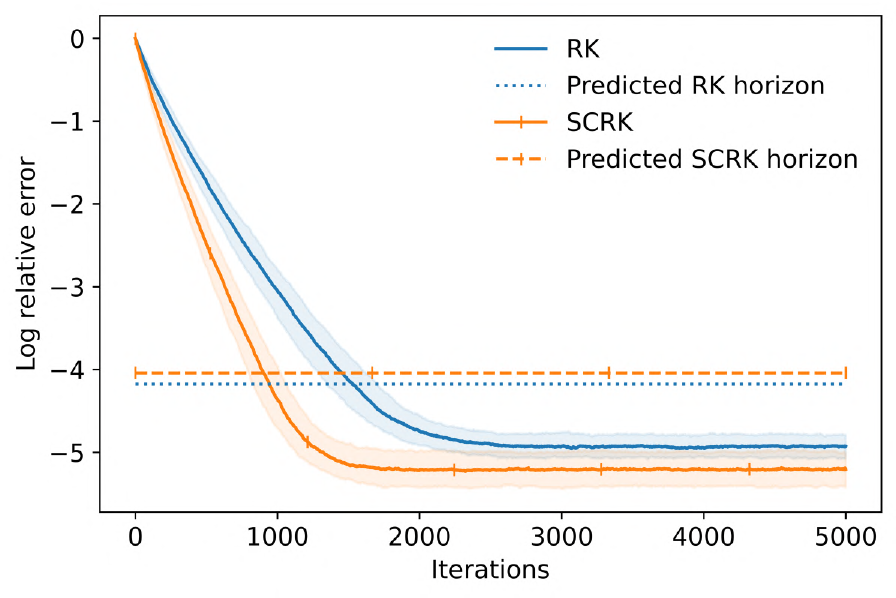}
    \caption{
    Convergence paths for the SCRK (with $I_0$ equal to the first $m_0 = 25$ rows) and RK methods on a noisy system. The dashed/dotted lines indicate the predicted error horizons $\gamma_0 + \gamma_1$ from Theorem~\ref{thm:noisy_scrk_convergence} and $\gamma = \norm{\mathbf{r}}^2 / \sigma_{\mathrm{min}}(\mathbf{A})^2$ from~\cite{Needell2010} respectively.
    }
    \label{fig:scrk_noisy_horizon}
\end{figure}

\subsection{QuantileSCRK algorithm} \label{subsec:experiments_quantilescrk}

\begin{figure}[!htb]
    \centering
    \begin{subfigure}[b]{\textwidth}
    \centering
    \begin{subfigure}[b]{0.505\textwidth}
        \centering
        \includegraphics[width=\textwidth]{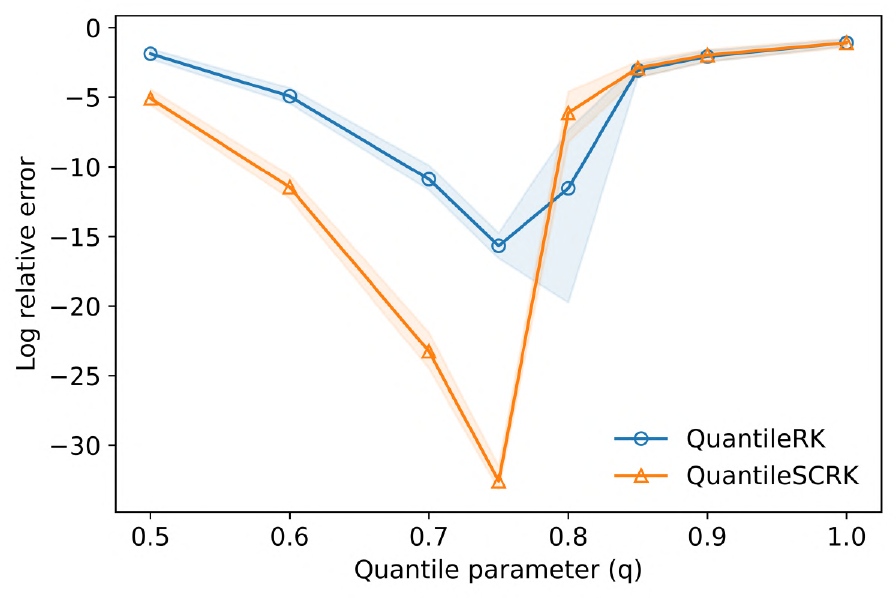}
    \end{subfigure}
    \hfill
    \begin{subfigure}[b]{0.485\textwidth}
        \centering
        \includegraphics[width=\textwidth, trim={0.7cm 0 0 0}, clip]{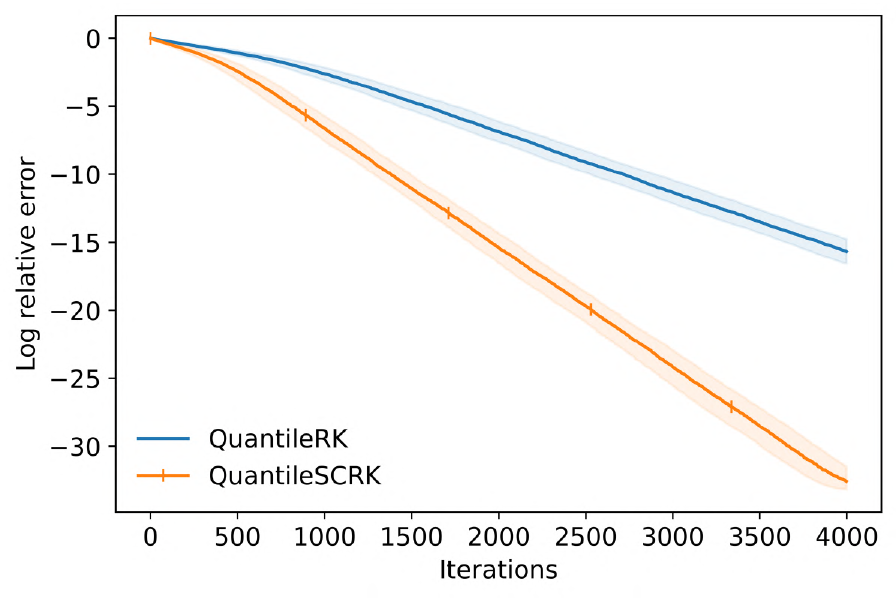}
    \end{subfigure}
    \subcaption{
    Tall system $\mathbf{A}^{500 \times 50}$ with $c = 100$, $m_0 = 20$, $k = 4,000$, and $q_{\mathrm{RK}} = q_{\mathrm{SCRK}} = 0.75$.
    }
    \label{fig:quantilescrk_tall}
    \end{subfigure}

    \begin{subfigure}[b]{\textwidth}
    \centering
    \begin{subfigure}[b]{0.505\textwidth}
        \centering
        \includegraphics[width=\textwidth]{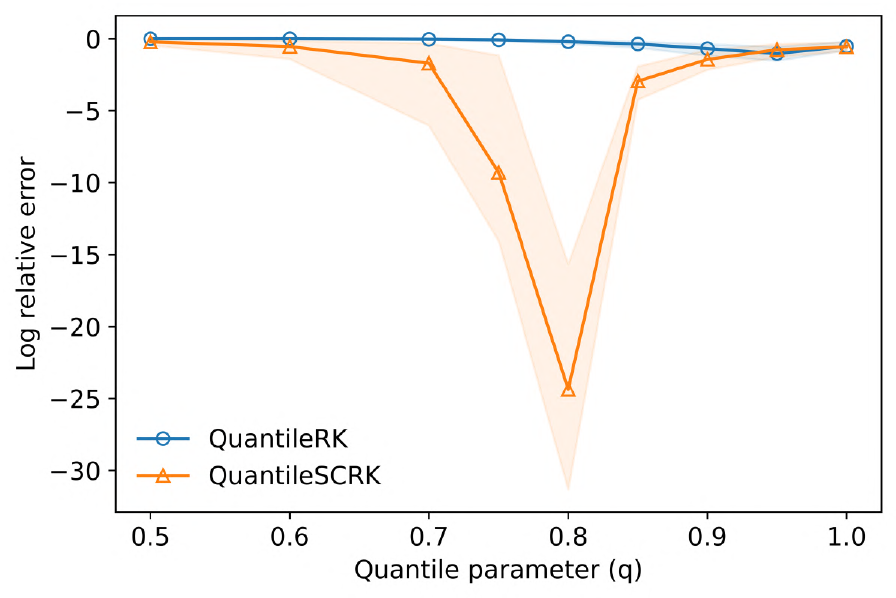}
    \end{subfigure}
    \hfill
    \begin{subfigure}[b]{0.485\textwidth}
        \centering
        \includegraphics[width=\textwidth, trim={0.7cm 0 0 0}, clip]{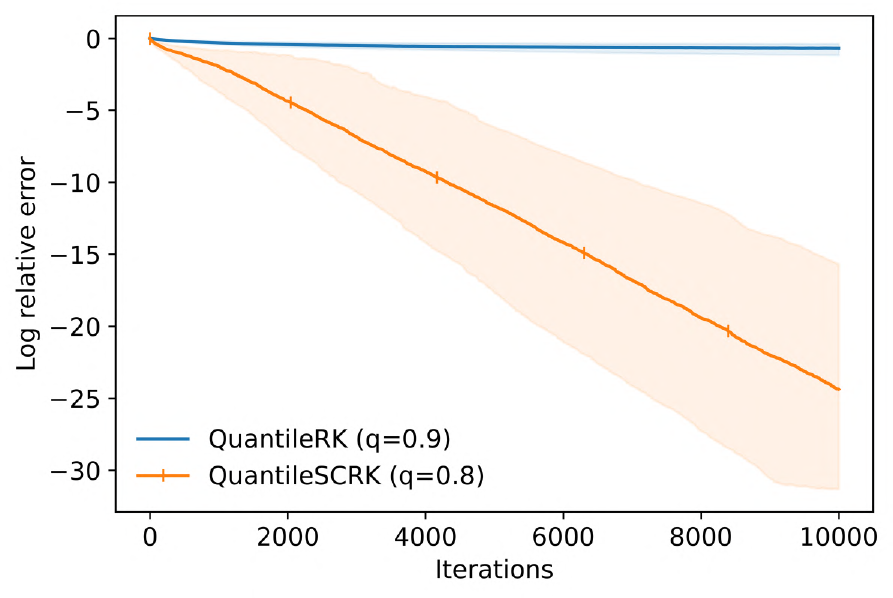}
    \end{subfigure}
    \subcaption{
    Almost-square systems $\mathbf{A}^{130 \times 100}$ with $c = 10$, $m_0 = 75$, $k = 10,000$, $q_{\mathrm{RK}} = 0.9$, $q_{\mathrm{SCRK}} = 0.8$.
    }
    \label{fig:quantilescrk_square}
    \end{subfigure}

    \caption{
    Performance of the QuantileSCRK method, given a corruption-free block of size $m_0$, compared to the QuantileRK method~\cite{HaNeReSw2022} on Gaussian systems with different aspect ratios and $c$ corrupted measurements.
    \textit{(Left)} Log relative error after $k$ iterations for various values of the quantile parameter $q$.
    \textit{(Right)} Convergence paths using the best quantile parameters $q_{\mathrm{RK}}$ and $q_{\mathrm{SCRK}}$.
    }
    \label{fig:quantilescrk_tall_square}
\end{figure}

In \textbf{Figure~\ref{fig:quantilescrk_tall_square}}, we compare the performance of the QuantileSCRK and QuantileRK~\cite{HaNeReSw2022} methods on Gaussian systems $\mathbf{A}$ with different aspect ratios, where the measurements are corrupted by a sparse vector with $c$ non-zero entries independently and uniformly distributed on $[-1, 1]$. The rows of $\mathbf{A}$ are independent normalized standard Gaussian vectors, and the solution $\mathbf{x}^*$ is a standard Gaussian vector.

Tall systems are considered in \textbf{Figure~\ref{fig:quantilescrk_tall}}, where $100 / 500 = 20\%$ (resp. $100 / 480 \approx 20.8\%$) of the rows of $\mathbf{A}$ (resp. $\mathbf{A}_{I_1}$) correspond to corrupted measurements. These plots replicate the finding that the QuantileRK method converges effectively for tall, Gaussian-like matrices even in the presence of numerous corruptions~\cite{HaNeReSw2022}, and also show that exploiting information about corruption-free measurements using the QuantileSCRK method accelerates convergence (see Theorem~\ref{thm:quantilescrk_convergence}).

Almost-square systems are considered in \textbf{Figure~\ref{fig:quantilescrk_square}}, where $10 / 130 \approx 7.7\%$ (resp. $10 / 55 \approx 18.2\%$) of the rows of $\mathbf{A}$ (resp. $\mathbf{A}_{I_1}$) correspond to corrupted measurements. It is clear that the QuantileRK method is unable to make any progress in this setting. On the other hand, the QuantileSCRK method converges for $q$ around $0.8$, which demonstrates that exploiting external knowledge in the form of a large block $\mathbf{A}_{I_0}$ corresponding to corruption-free measurements can enable convergence in such challenging settings.

\subsection{Systems of differential equations with inconsistent initial conditions} \label{subsec:experiments_diffeqns}

We consider the problem of numerically solving a system of differential equations given competing data for the initial conditions as another application of the QuantileSCRK method.
After discretization via a finite difference scheme, two types of equations arise: the first describe the underlying law and can be considered to be known exactly, and the second type encode the initial conditions, which can be obtained from real data with potentially faulty measurements.
Thus, the problem can be viewed as one about detecting and disregarding the ``corrupted'' equations coming from inconsistent initial conditions, given that the majority of the equations of the first type can be ``trusted''.

In \textbf{Figure~\ref{fig:quantilescrk_pde_lines}}, we consider the linear system obtained from discretizing the differential equation $y'' = 0$ for a line illustrate this idea. The top $98 \times 100$ block is a Toeplitz matrix with entries $1, -2, 1$ along the diagonal before normalization, which we take to be $\mathbf{A}_{I_0}$. We consider two sets of initial conditions corresponding to two lines: Line~1 being $y = x$ with 10 initial conditions, and Line~2 being $y = 25 - x/2$ with 5 initial conditions.

The plots show that solving this system using least squares or QuantileRK produces poor solutions. However, using QuantileSCRK with a careful choice of the quantile parameter enables convergence to one line or the other as the algorithm is able to find a set of consistent initial conditions: when $q = 0.65$, QuantileSCRK converges to Line~1 a majority of the time (left), and when $q = 0.3$, QuantileSCRK converges to Line~2 instead (right). We also observed that the initial iterate $\mathbf{x}^0$ has a significant biasing effect on which solution is preferred for convergence.

\begin{figure}[!htb]
    \centering
    \begin{subfigure}[b]{0.495\textwidth}
        \centering
        \includegraphics[width=\textwidth]{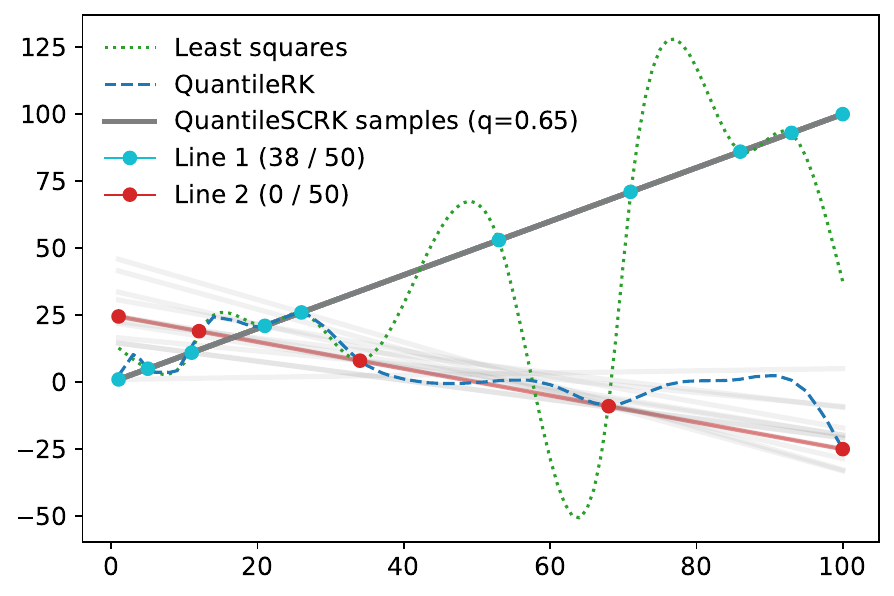}
    \end{subfigure}
    \hfill
    \begin{subfigure}[b]{0.495\textwidth}
        \centering
        \includegraphics[width=\textwidth]{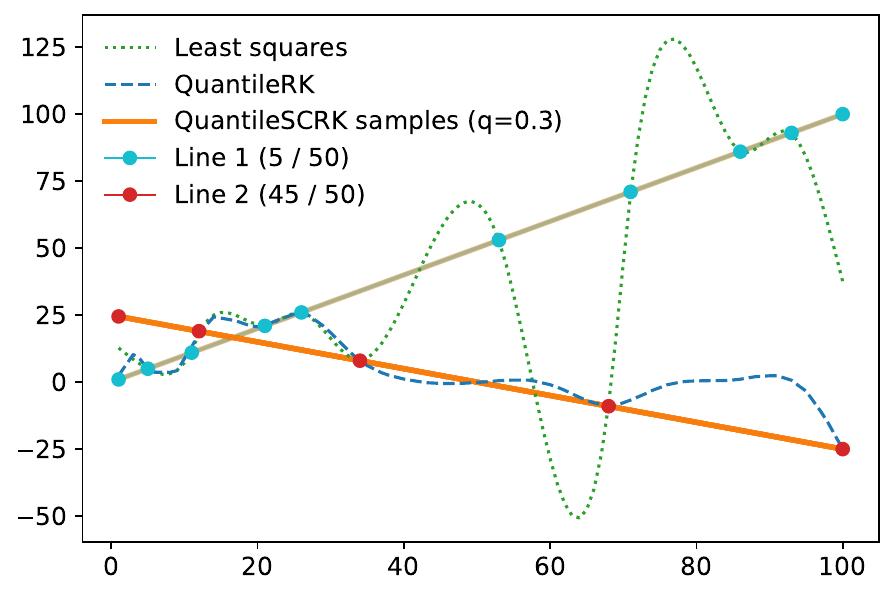}
    \end{subfigure}
    \caption{
    Solving a discretized differential equation for a line in the plane given two sets of inconsistent initial conditions as described in the main text.
    \textit{(Left)} 50 outputs after 10,000 iterations of QuantileSCRK with $q = 0.65$ (translucent grey lines); 38 out of the 50 converged to Line~1 (in the sense $\norm{\mathbf{A}_{\text{Line 1}} \mathbf{x}^k - \mathbf{b}_{\text{Line 1}}}_2 < 10^{-3}$).
    \textit{(Right)} 50 outputs after 10,000 iterations of the QuantileSCRK algorithm with $q = 0.3$ (translucent orange lines); 45 out of the 50 converged to Line~2.
    }
    \label{fig:quantilescrk_pde_lines}
\end{figure}

\subsection{CT image reconstruction} \label{subsec:experiments_ct}

\begin{figure}[!htb]
    \centering
    \includegraphics[width=\textwidth]{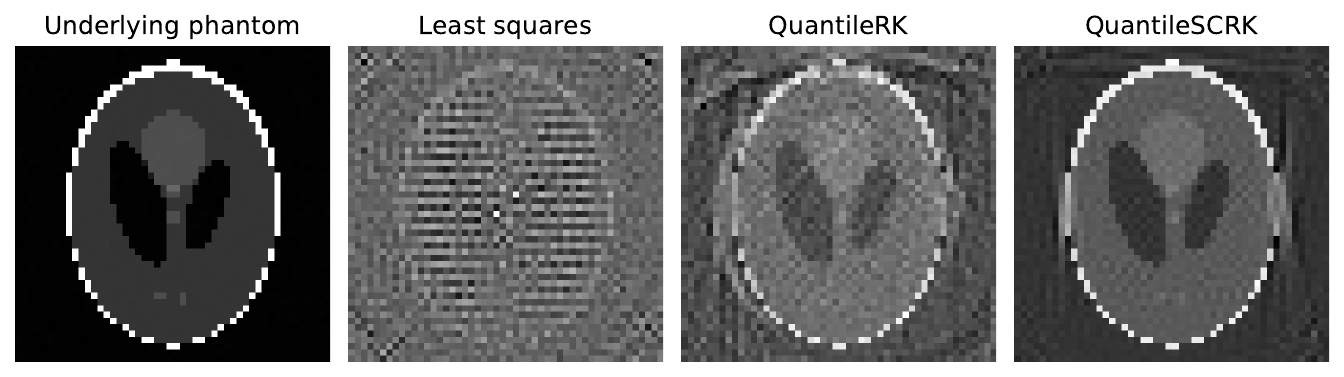}
    \caption{
    Reconstructions of the Shepp-Logan phantom from $\mathbf{A} \in \reals^{4,500 \times 2,500}$ and $\widetilde{\mathbf{b}} \in \reals^{4,500}$ with $c = 1,125$ corruptions as described in the main text. The QuantileSCRK method, given a corruption-free block $\mathbf{A}_{I_0}$ of size $m_0 = 500$, and the QuantileRK method were both run using $q = 0.7$ for $60m = 270,000$ iterations, obtaining a final $\ell_2$ error $\norm{\mathbf{x}^k - \mathbf{x}}_2$ of 3.47 and 6.85 respectively.
    }
    \label{fig:quantilescrk_phantom}
\end{figure}

Finally, we investigate the performance of QuantileSCRK on a realistic dataset. We consider the Shepp-Logan phantom, generated using the Air Tools II package~\cite{HansenJorgensen2018} with parameters $N = 50$ (the image is $N \times N$), $\theta = \{ 0, 2, 4, \dots, 178 \}$ (angles used), and $\rho = 50$ (number of parallel rays). The image is encoded by the measurement matrix $\mathbf{A} \in \reals^{4,500 \times 2,500}$ and measurements $\mathbf{b} \in \reals^{4,500}$. A subset $I_0$ of $m_0 = 500$ rows of $\mathbf{A}$ was randomly chosen to be corruption-free (e.g.\ corresponding to trustworthy measurements), and a random set of $c = 1,125$ of the remaining measurements were corrupted by quantities uniformly distributed in $[2, 6]$ to produce the corrupted measurements $\widetilde{\mathbf{b}}$.

In \textbf{Figure~\ref{fig:quantilescrk_phantom}}, we show various reconstructions given $\mathbf{A}$ and the corrupted measurements $\widetilde{\mathbf{b}}$. It is clear that the least squares solution is very poor. The QuantileRK method, initialized from zero, achieves a noisy reconstruction that does not recover the fine details. Using QuantileSCRK with the corruption-free block $\mathbf{A}_{I_0}$ achieves the best reconstruction, even though a significant proportion (25\%) of the measurements have been corrupted.

\section{Conclusion and future directions} \label{sec:conclusion}

In this paper, we introduced the subspace constrained randomized Kaczmarz (SCRK) method for solving consistent, overdetermined systems of linear equations $\mathbf{A} \mathbf{x} = \mathbf{b}$, which provides a framework for studying the dynamics of the randomized Kaczmarz algorithm when the iterates are confined to a selected solution space $\mathbf{A}_{I_0} \mathbf{x} = \mathbf{b}_{I_0}$. We described the convergence rate of the SCRK method in terms of the spectral properties of the matrix $\mathbf{A}_{I_1} \mathbf{P}$, where $\mathbf{P}$ is the orthogonal projector onto $\Null(\mathbf{A}_{I_0})$. We also demonstrated, both theoretically and empirically, how the SCRK method can exploit approximately low-rank structure to accelerate convergence.

We also proposed the QuantileSCRK method for solving corrupted linear systems, which is able to exploit external knowledge about corruption-free subsystems. In addition to theoretical convergence analysis, we demonstrated numerically that it is able to converge for almost-square corrupted linear systems, where existing iterative methods are ineffective, and that it can be useful for solving differential equations with inconsistent initial conditions and image reconstruction from significantly corrupted measurements.

The framework of subspace constrained iterations raises many possible future directions. For example, since our analysis showed that the SCRK updates simplify to a version of the usual Kaczmarz updates with skewed step directions and the projector $\mathbf{P}$ acts as a right preconditioner for $\mathbf{A}$ to improve the convergence rate, it seems plausible that similar ideas could be applied to related solvers such as the sketch-and-project algorithm~\cite{GowerRichtarik2015} or iterative projection methods for solving systems of linear inequalities~\cite{LeventhalLewis2010, BriskmanNeedell2015}.
It would also be interesting to develop and analyze a QuantileSCRK method in which the trusted solution space is built up adaptively in a data-driven way, based on the information accumulated during the iteration process, which could lead to an effective way for solving corrupted linear systems even in the absence of external knowledge.

\section*{Acknowledgements}

The authors acknowledge partial support by NSF DMS-2309685. ER was also partially funded by NSF DMS-2108479. ER is grateful to Y. Kevrekidis and M. Derezi{\'n}ski for the helpful and inspiring discussions.
We would like to thank the referees for their valuable comments and suggestions, which have improved the clarity and exposition of the paper.


\phantomsection
\printbibliography[title=References]

@article{StrohmerVershynin2009,
	author      = "Strohmer, Thomas and Vershynin, Roman",
	title 	    = "{A Randomized Kaczmarz Algorithm with Exponential Convergence}",
	journal     = "J. Fourier Anal. Appl.",
	year        = "2009",
	volume      = "15",
	number      = "",
	pages       = "262-278",
	doi         = "10.1007/s00041-008-9030-4",
	eprinttype  = "arXiv",
	eprint      = "math/0702226",
	note 	    = ""
}

@article{derezinski2024sharp,
	author      = "Derezi\'{n}ski, Micha\l{} and Rebrova, Elizaveta",
	title       = "{Sharp Analysis of Sketch-and-Project Methods via a Connection to Randomized Singular Value Decomposition}",
        journal     = "SIAM Journal on Mathematics of Data Science",
	year        = "2024",
	volume      = "6",
	number      = "1",
	pages       = "127-153",
	doi         = "10.1137/23M1545537",
	eprinttype  = "arXiv",
	eprint      = "2208.09585",
	eprintclass = "math.OC",
	note 	    = ""
}

@article{rebrova2021block,
	author      = "Rebrova, Elizaveta and Needell, Deanna",
	title       = "{On block Gaussian sketching for the Kaczmarz method}",
	journal     = "Numerical Algorithms",
	year        = "2021",
	volume      = "86",
	number      = "1",
	pages       = "443-473",
	doi         = "10.1007/s11075-020-00895-9",
	eprinttype  = "arXiv",
	eprint      = "1905.08894",
	eprintclass = "math.PR",
	note        = ""
}

@article{NeSrWa2016,
	author      = "Needell, Deanna and Srebo, Nathan and Ward, Rachel",
	title       = "{Stochastic gradient descent, weighted sampling, and the randomized Kaczmarz algorithm}",
	journal     = "Math. Program.",
	year        = "2016",
	volume      = "155",
	number      = "",
	pages       = "549-573",
	doi         = "10.1007/s10107-015-0864-7",
	eprinttype  = "arXiv",
	eprint      = "1310.5715",
	eprintclass = "math.NA",
	note        = ""
}

@article{Needell2010,
	author      = "Needell, Deanna",
	title       = "{Randomized Kaczmarz solver for noisy linear systems}",
	journal     = "BIT Numer. Math.",
	year        = "2010",
	volume      = "50",
	number      = "",
	pages       = "395-403",
	doi         = "10.1007/s10543-010-0265-5",
	eprinttype  = "arXiv",
	eprint      = "0902.0958",
	eprintclass = "math.NA",
	note        = ""
}

@article{HaNeReSw2022,
	author      = "Haddock, Jamie and Needell, Deanna and Rebrova, Elizaveta and Swartworth, William",
	title       = "{Quantile-Based Iterative Methods for Corrupted Systems of Linear Equations}",
	journal     = "SIAM Journal on Matrix Analysis and Applications",
	year        = "2022",
	volume      = "43",
	number      = "2",
	pages       = "605-637",
	doi         = "10.1137/21M1429187",
	eprinttype  = "arXiv",
	eprint      = "2009.08089",
	eprintclass = "math.NA",
	note        = ""
}

@article{NeedellTropp2014,
	author      = "Needell, Deanna and Tropp, Joel A.",
	title       = "{Paved with good intentions: Analysis of a randomized block Kaczmarz method}",
	journal     = "Linear Algebra and its Applications",
	year        = "2014",
	volume      = "441",
	number      = "",
	pages       = "199-221",
	doi         = "10.1016/j.laa.2012.12.022",
	eprinttype  = "arXiv",
	eprint      = "1208.3805",
	eprintclass = "math.NA",
	note        = ""
}

@inproceedings{candes2005error,
	author      = "Candes, Emmanuel J. and Rudelson, Mark and Tao, Terence and Vershynin, Roman",
	title 	    = "{Error Correction via Linear Programming}",
	booktitle   = "IEEE Symposium on Foundations of Computer Science (FOCS)",
	pages       = "668-681",
	year 	    = "2006",
	doi         = "10.1109/SFCS.2005.5464411",
	eprinttype  = "",
	eprint      = "",
	note 	    = ""
}

@article{GowerRichtarik2015,
	author      = "Gower, Robert M. and Richt\'{a}rik, Peter",
	title 	    = "{Randomized Iterative Methods for Linear Systems}",
	journal     = "SIAM Journal on Matrix Analysis and Applications",
	year 	    = "2015",
	volume      = "36",
	number      = "4",
	pages       = "1660-1690",
	doi         = "10.1137/15M1025487",
	eprinttype  = "arXiv",
	eprint      = "1506.03296",
	eprintclass = "math.NA",
	note 	    = ""
}

@inproceedings{JarmanNeedell2021,
	author      = "Jarman, Benjamin and Needell, Deanna",
	title 	    = "{QuantileRK: Solving large-scale linear systems with corrupted, noisy data}",
	booktitle   = "55th Asilomar Conference on Signals, Systems, and Computers",
	editor      = "",
	publisher   = "",
	pages       = "1312-1316",
	year 	    = "2021",
	volume      = "",
	doi         = "",
	eprinttype  = "arXiv",
	eprint      = "2108.02304",
	eprintclass = "math.NA",
	note 	    = ""
}

@article{LoHaNe2017,
	author      = "De Loera, Jes\'{u}s A. and Haddock, Jamie and Needell, Deanna",
	title       = "{A Sampling Kaczmarz--Motzkin Algorithm for Linear Feasibility}",
	journal     = "SIAM Journal on Scientific Computing",
	year        = "2017",
	volume      = "39",
	number      = "5",
	pages       = "S66-S87",
	doi         = "10.1137/16M1073807",
	eprinttype  = "arXiv",
	eprint      = "1605.01418",
	eprintclass = "math.OC",
	note 	    = ""
}

@article{LeventhalLewis2010,
	author      = "Leventhal, Dennis and Lewis, Adrian S.",
	title 	    = "{Randomized Methods for Linear Constraints: Convergence Rates and Conditioning}",
	journal     = "Mathematics of Operations Research",
	year 	    = "2010",
	volume      = "35",
	number      = "3",
	pages       = "641-654",
	doi         = "10.1287/moor.1100.0456",
	eprinttype  = "arXiv",
	eprint      = "0806.3015",
	eprintclass = "math.OC",
	note 	    = ""
}

@article{Necoara2019,
	author      = "Necoara, Ion",
	title       = "{Faster Randomized Block Kaczmarz Algorithms}",
	journal     = "SIAM Journal on Matrix Analysis and Applications",
	year 	    = "2019",
	volume      = "40",
	number      = "4",
	pages       = "1425-1452",
	doi         = "10.1137/19M1251643",
	eprinttype  = "arXiv",
	eprint      = "1902.09946",
	eprintclass = "math.OC",
	note 	    = ""
}

@article{Steinerberger2023,
	author      = "Steinerberger, Stefan",
	title       = "{Quantile-based Random Kaczmarz for corrupted linear systems of equations}",
	journal     = "Information and Inference: A Journal of the IMA",
	year 	    = "2023",
	volume      = "12",
	number      = "1",
	pages       = "448-465",
	doi         = "10.1093/imaiai/iaab029",
	eprinttype  = "arXiv",
	eprint      = "2107.05554",
	eprintclass = "math.NA",
	note 	    = ""
}

@article{HaddockNeedell2019,
	author      = "Haddock, Jamie and Needell, Deanna",
	title       = "{Randomized Projection Methods for Linear Systems with Arbitrarily Large Sparse Corruptions}",
	journal     = "SIAM Journal on Scientific Computing",
	year        = "2019",
	volume      = "41",
	number      = "5",
	pages       = "S19-S36",
	doi         = "10.1137/18M1179213",
	eprinttype  = "arXiv",
	eprint      = "1803.08114",
	eprintclass = "math.NA",
	note        = ""
}

@article{Steinerberger2021,
	author      = "Steinerberger, Stefan",
	title       = "{Randomized Kaczmarz converges along small singular vectors}",
	journal     = "SIAM Journal on Matrix Analysis and Applications",
	year 	    = "2021",
	volume      = "42",
	number      = "2",
	pages       = "608-615",
	doi         = "10.1137/20M1350947",
	eprinttype  = "arXiv",
	eprint      = "2006.16978",
	eprintclass = "math.NA",
	note 	    = ""
}

@article{NeZhZo2015,
	author      = "Needell, Deanna and Zhao, Ran and Zouzias, Anastasios",
	title       = "{Randomized Block Kaczmarz Method with Projection for Solving Least Squares}",
	journal     = "Linear Algebra and its Applications",
	year        = "2015",
	volume      = "484",
	number      = "",
	pages       = "322-343",
	doi         = "10.1016/j.laa.2015.06.027",
	eprinttype  = "arXiv",
	eprint      = "1403.4192",
	eprintclass = "math.NA",
	note        = ""
}

@article{ZouziasFreris2013,
	author      = "Zouzias, Anastasios and Freris, Nikolaos M.",
	title       = "{Randomized Extended Kaczmarz for Solving Least Squares}",
	journal     = "SIAM Journal on Matrix Analysis and Applications",
	year        = "2013",
	volume      = "34",
	number      = "2",
	pages       = "773-793",
	doi         = "10.1137/120889897",
	eprinttype  = "arXiv",
	eprint      = "1205.5770",
	eprintclass = "math.NA",
	note        = ""
}

@article{MaNeRa2015,
	author      = "Ma, Anna and Needell, Deanna and Ramdas, Aaditya",
	title       = "{Convergence Properties of the Randomized Extended Gauss--Seidel and Kaczmarz Methods}",
	journal     = "SIAM Journal on Matrix Analysis and Applications",
	year        = "2015",
	volume      = "36",
	number      = "4",
	pages       = "1590-1604",
	doi         = "10.1137/15M1014425",
	eprinttype  = "arXiv",
	eprint      = "1503.08235",
	eprintclass = "math.NA",
	note        = ""
}

@article{RudelsonVershynin2015,
	author      = "Rudelson, Mark and Vershynin, Roman",
	title       = "{Small Ball Probabilities for Linear Images of High-Dimensional Distributions}",
	journal     = "International Mathematics Research Notices",
	year        = "2015",
	volume      = "2015",
	number      = "19",
	pages       = "9594-9617",
	doi         = "10.1093/imrn/rnu243",
	eprinttype  = "arXiv",
	eprint      = "1402.4492",
	eprintclass = "math.PR",
	note        = ""
}

@book{Vershynin2018,
    title       = "{High-Dimensional Probability: An Introduction with Applications in Data Science}",
    author      = "Vershynin, Roman",
    year        = "2018",
    publisher   = "Cambridge University Press",
    address     = "Cambridge",
    series      = "",
    doi         = "10.1017/9781108231596"
}

@article{HaMaTr2011,
	author      = "Halko, Nathan and Martinsson, Per-Gunnar and Tropp, Joel A.",
	title       = "{Finding structure with randomness: Probabilistic algorithms for constructing approximate matrix decompositions}",
	journal     = "SIAM Review",
	year        = "2011",
	volume      = "53",
	number      = "2",
	pages       = "217-288",
	doi         = "10.1137/090771806",
	eprinttype  = "arXiv",
	eprint      = "0909.4061",
	eprintclass = "math.NA",
	note        = ""
}

@article{ChJaNeRe2022,
	author      = "Cheng, Lu and Jarman, Benjamin and Needell, Deanna and Rebrova, Elizaveta",
	title       = "{On block accelerations of quantile randomized Kaczmarz for corrupted systems of linear equations}",
	journal     = "Inverse Problems",
	year        = "2023",
	volume      = "39",
	number      = "2",
	pages       = "024002",
	doi         = "10.1088/1361-6420/aca78a",
	eprinttype  = "arXiv",
	eprint      = "2206.12554",
	eprintclass = "math.NA",
	note        = ""
}

@inproceedings{amaldi2005randomized,
	author      = "Amaldi, Edoardo and Belotti, Pietro and Hauser, Raphael",
	title 	    = "{Randomized Relaxation Methods for the Maximum Feasible Subsystem Problem}",
	booktitle   = "Integer Programming and Combinatorial Optimization",
	editor      = "",
	publisher   = "Springer",
        address     = "Berlin, Heidelberg",
	pages       = "249-264",
	year 	    = "2005",
	volume      = "",
	doi         = {10.1007/11496915_19},
	eprinttype  = "",
	eprint      = "",
	eprintclass = "",
	note 	    = ""
}

@inproceedings{dalalyan2019outlier,
	author      = "Dalalyan, Arnak and Thompson, Philip",
	title 	    = "{Outlier-robust estimation of a sparse linear model using $\ell_1$-penalized Huber's $M$-estimator}",
	booktitle   = "Advances in Neural Information Processing Systems",
	editor      = "",
	publisher   = "",
        address     = "",
	pages       = "",
	year 	    = "2019",
	volume      = "32",
	doi         = "",
	eprinttype  = "arXiv",
	eprint      = "1904.06288",
	eprintclass = "math.ST",
	note 	    = ""
}

@article{amaldi1995complexity,
	author      = "Amaldi, Edoardo and Kann, Viggo",
	title       = "{The complexity and approximability of finding maximum feasible subsystems of linear relations}",
	journal     = "Theoretical Computer Science",
	year 	    = "1995",
	volume      = "147",
	number      = "1-2",
	pages       = "181-210",
	doi         = "10.1016/0304-3975(94)00254-G",
	eprinttype  = "",
	eprint      = "",
	eprintclass = "",
	note 	    = ""
}

@article{OymakTropp2018,
	author      = "Oymak, Samet and Tropp, Joel A",
	title       = "{Universality laws for randomized dimension reduction, with applications}",
	journal     = "Information and Inference: A Journal of the IMA",
	year 	    = "2018",
	volume      = "7",
	number      = "3",
	pages       = "337-446",
	doi         = "10.1093/imaiai/iax011",
	eprinttype  = "arXiv",
	eprint      = "1511.09433",
	eprintclass = "math.PR",
	note 	    = ""
}

@article{BriskmanNeedell2015,
	author      = "Briskman, Jonathan and Needell, Deanna",
	title       = "{Block Kaczmarz Method with Inequalities}",
	journal     = "Journal of Mathematical Imaging and Vision",
	year 	    = "2015",
	volume      = "52",
	number      = "",
	pages       = "385-396",
	doi         = "10.1007/s10851-014-0539-7",
	eprinttype  = "arXiv",
	eprint      = "1406.7339",
	eprintclass = "math.NA",
	note 	    = ""
}

@article{Kaczmarz1937,
	author      = "Kaczmarz, Stefan",
	title       = "{Angen\"{a}herte Aufl\"{o}sung von Systemen linearer Gleichungen}",
	journal     = "Bulletin International de l'Academie Polonaise des Sciences et des Lettres",
	year 	    = "1937",
	volume      = "35",
	number      = "",
	pages       = "355-357",
	doi         = "",
	eprinttype  = "",
	eprint      = "",
	eprintclass = "",
	note 	    = ""
}

@article{HaddockMa2021,
	author      = "Haddock, Jamie and Ma, Anna",
	title       = "{Greed Works: An Improved Analysis of Sampling Kaczmarz--Motzkin}",
	journal     = "SIAM Journal on Mathematics of Data Science",
	year        = "2021",
	volume      = "3",
	number      = "1",
	pages       = "342-368",
	doi         = "10.1137/19M1307044",
	eprinttype  = "arXiv",
	eprint      = "1912.03544",
	eprintclass = "math.NA",
	note        = ""
}

@inproceedings{GKLR2019,
	author      = "Gower, Robert M. and Kovalev, Dmitry and Lieder, Felix and Richt\'{a}rik, Peter",
	title 	    = "{RSN: Randomized Subspace Newton}",
	booktitle   = "Advances in Neural Information Processing Systems",
	editor      = "",
	publisher   = "",
	pages       = "",
	year 	    = "2019",
	volume      = "32",
	doi         = "",
	eprinttype  = "arXiv",
	eprint      = "1905.10874",
	eprintclass = "math.OC",
	note 	    = ""
}

@article{Mahoney2011,
	author      = "Mahoney, Michael W.",
	title       = "{Randomized Algorithms for Matrices and Data}",
	journal     = "Foundations and Trends in Machine Learning",
	year        = "2011",
	volume      = "3",
	number      = "2",
	pages       = "123-224",
	doi         = "10.1561/2200000035",
	eprinttype  = "arXiv",
	eprint      = "1104.5557",
	eprintclass = "cs.DS",
	note        = ""
}

@article{NeedellWard2013,
	author      = "Needell, Deanna and Ward, Rachel",
	title       = "{Two-Subspace Projection Method for Coherent Overdetermined Systems}",
	journal     = "Journal of Fourier Analysis and Applications",
	year        = "2013",
	volume      = "19",
	number      = "",
	pages       = "256-269",
	doi         = "10.1007/s00041-012-9248-z",
	eprinttype  = "arXiv",
	eprint      = "1204.0277",
	eprintclass = "math.NA",
	note        = "Associated technical report arXiv:1204.0279"
}

@misc{JinEtAl2019,
	author      = "Jin, Chi and Netrapalli, Praneeth and Ge, Rong and Kakade, Sham M. and Jordan, Michael I.",
	title 	    = "{A Short Note on Concentration Inequalities for Random Vectors with SubGaussian Norm}",
	year 	    = "2019",
	eprinttype  = "arXiv",
	eprint      = "1902.03736",
	eprintclass = "math.PR",
	note 	    = "arXiv preprint arXiv:1902.03736"
}

@inproceedings{derezinski2023solving,
	author      = "Derezi{\'n}ski, Micha{\l} and Yang, Jiaming",
	title 	    = "{Solving Dense Linear Systems Faster than via Preconditioning}",
        booktitle   = "ACM Symposium on Theory of Computing (STOC)",
        pages       = "",
	year 	    = "2024",
	eprinttype  = "arXiv",
	eprint      = "2312.08893",
	eprintclass = "cs.DS",
	note 	    = ""
}

@misc{derezinski2024fine,
	author      = "Derezi{\'n}ski, Micha{\l} and LeJeune, Daniel and Needell, Deanna and Rebrova, Elizaveta",
	title 	    = "{Fine-grained Analysis and Faster Algorithms for Iteratively Solving Linear Systems}",
	year 	    = "2024",
	eprinttype  = "arXiv",
	eprint      = "2405.05818",
	eprintclass = "cs.DS",
	note 	    = "arXiv preprint arXiv:2405.05818"
}

@article{du2021kaczmarz,
	author      = "Du, Yi-Shu and Hayami, Ken and Zheng, Ning and Morikuni, Keiichi and Yin, Jun-Feng",
	title       = "{Kaczmarz-type inner-iteration preconditioned flexible GMRES methods for consistent linear systems}",
	journal     = "SIAM Journal on Scientific Computing",
	year        = "2021",
	volume      = "43",
	number      = "5",
	pages       = "S345-S366",
	doi         = "10.1137/20M1344937",
	eprinttype  = "arXiv",
	eprint      = "2006.10818",
	eprintclass = "math.NA",
	note        = ""
}

@article{chen2021regularized,
	author      = "Chen, Xuemei and Qin, Jing",
	title       = "{Regularized Kaczmarz algorithms for tensor recovery}",
	journal     = "SIAM Journal on Imaging Sciences",
	year        = "2021",
	volume      = "14",
	number      = "4",
	pages       = "1439-1471",
	doi         = "10.1137/21M1398562",
	eprinttype  = "arXiv",
	eprint      = "2102.06852",
	eprintclass = "math.OC",
	note        = ""
}

@book{Natterer2001,
    author      = "Natterer, Frank",
    title       = "The Mathematics of Computerized Tomography",
    year        = "2001",
    publisher   = "Society for Industrial and Applied Mathematics",
    address     = "",
    series      = "Classics in Applied Mathematics",
    doi         = "10.1137/1.9780898719284",
    note        = ""
}

@book{Herman2009,
    author      = "Herman, Gabor T.",
    title       = "{Fundamentals of Computerized Tomography}",
    year        = "2009",
    publisher   = "Springer-Verlag London",
    address     = "",
    series      = "Advances in Computer Vision and Pattern Recognition",
    doi         = "10.1007/978-1-84628-723-7",
    note        = ""
}

@inproceedings{CenkerEtAl1992,
	author      = "Cenker, C. and Feichtinger, Hans Georg and Mayer, M. and Steier, H. and Strohmer, Thomas",
	title 	    = "{New variants of the POCS method using affine subspaces of finite codimension with applications to irregular sampling}",
	booktitle   = "Visual Communications and Image Processing '92",
	volume      = "1818",
	pages       = "299-310",
	year 	    = "1992",
	doi         = "",
	eprinttype  = "",
	eprint      = "",
	note 	    = ""
}

@article{LiuWright2016,
	author      = "Liu, Ji and Wright, Stephen J.",
	title 	    = "{An Accelerated Randomized Kaczmarz Algorithm}",
	journal     = "Mathematics of Computation",
	year 	    = "2016",
	volume      = "85",
	number      = "297",
	pages       = "153-178",
	doi         = "10.1090/mcom/2971",
	eprinttype  = "arXiv",
	eprint      = "1310.2887",
	eprintclass = "math.NA",
	note 	    = ""
}

@article{SchopferLorenz2019,
	author      = "Sch{\"{o}}pfer, F. and Lorenz, D.A.",
	title 	    = "{Linear convergence of the randomized sparse Kaczmarz method}",
	journal     = "Mathematical Programming",
	year 	    = "2019",
	volume      = "173",
	number      = "",
	pages       = "509-536",
	doi         = "10.1007/s10107-017-1229-1",
	eprinttype  = "arXiv",
	eprint      = "1610.02889",
	eprintclass = "math.OC",
	note 	    = ""
}

@inproceedings{LorenzEtAl2014,
	author      = "Lorenz, Dirk A. and Wenger, Stephan and Sch{\"{o}}pfer, Frank and Magnor, Marcus A.",
	title 	    = "{A sparse Kaczmarz solver and a linearized Bregman method for online compressed sensing}",
	booktitle   = "IEEE International Conference on Image Processing (ICIP)",
	editor      = "",
	publisher   = "",
	pages       = "1347-1351",
	year 	    = "2014",
	volume      = "",
	doi         = "10.1109/ICIP.2014.7025269",
	eprinttype  = "arXiv",
	eprint      = "1403.7543",
	eprintclass = "math.OC",
	note 	    = ""
}

@book{HornJohnson2012,
    author      = "Horn, Roger A. and Johnson, Charles R.",
    title       = "{Matrix Analysis}",
    edition     = "2",
    year        = "2012",
    publisher   = "Cambridge University Press",
    address     = "Cambridge",
    series      = "",
    doi         = "10.1017/CBO9781139020411"
}

@article{Elfving1980,
	author      = "Elfving, Tommy",
	title 	    = "{Block-Iterative Methods for Consistent and Inconsistent Linear Equations}",
	journal     = "Numerische Mathematik",
	year        = "1980",
	volume      = "35",
	number      = "",
	pages       = "1-12",
	doi         = "10.1007/BF01396365",
	eprinttype  = "",
	eprint      = "",
	note 	    = ""
}

@inproceedings{KOSZ2013,
	author      = "Kelner, Jonathan A. and Orecchia, Lorenzo and Sidford, Aaron and Zhu, Zeyuan Allen",
	title       = "{A Simple, Combinatorial Algorithm for Solving SDD Systems in Nearly-Linear Time}",
	booktitle   = "ACM Symposium on Theory of Computing (STOC)",
        series      = "",
	year        = "2013",
	volume      = "",
	number      = "",
	pages       = "911–920",
	doi         = "10.1145/2488608.2488724",
	eprinttype  = "arXiv",
	eprint      = "1301.6628",
	eprintclass = "cs.DS",
	note        = ""
}

@article{TanVershynin2019,
	author      = "Tan, Yan Shuo Tan and Vershynin, Roman",
	title       = "{Phase retrieval via randomized Kaczmarz: theoretical guarantees}",
	journal     = "Information and Inference: A Journal of the IMA",
	year        = "2019",
	volume      = "8",
	number      = "1",
	pages       = "97-123",
	doi         = "10.1093/imaiai/iay005",
	eprinttype  = "arXiv",
	eprint      = "1706.09993",
	eprintclass = "math.NA",
	note        = ""
}

@article{DrMaMu2008,
	author      = "Drineas, Petros and Mahoney, Michael W. and Muthukrishnan, S.",
	title       = "{Relative-Error $CUR$ Matrix Decompositions}",
	journal     = "SIAM Journal on Matrix Analysis and Applications",
	year        = "2008",
	volume      = "30",
	number      = "2",
	pages       = "844-881",
	doi         = "10.1137/07070471X",
	eprinttype  = "arXiv",
	eprint      = "0708.3696",
	eprintclass = "cs.DS",
	note        = ""
}

@article{DrKaMa2006b,
	author      = "Drineas, Petros and Kannan, Ravi and Mahoney, Michael W.",
	title       = "{Fast Monte Carlo Algorithms for Matrices II: Computing a Low-Rank Approximation to a Matrix}",
	journal     = "SIAM Journal on Computing",
	year        = "2006",
	volume      = "36",
	number      = "1",
	pages       = "158-183",
	doi         = "10.1137/S0097539704442696",
	eprinttype  = "",
	eprint      = "",
	eprintclass = "",
	note        = ""
}

@inproceedings{BoMaDr2009,
	author      = "Boutsidis, Christos and Mahoney, Michael W. and Drineas, Petros",
	title 	    = "{An Improved Approximation Algorithm for the Column Subset Selection Problem}",
	booktitle   = "ACM-SIAM Symposium on Discrete Algorithms (SODA)",
	editor      = "",
	publisher   = "",
	pages       = "968-977",
	year 	    = "2009",
	volume      = "",
	doi         = "10.1137/1.9781611973068.105",
	eprinttype  = "arXiv",
	eprint      = "0812.4293",
	eprintclass = "cs.DS",
	note 	    = ""
}

@article{HansenJorgensen2018,
	author      = "Hansen, P.C. and Jørgensen, J.S.",
	title       = "{AIR Tools II: algebraic iterative reconstruction methods, improved implementation}",
	journal     = "Numerical Algorithms",
	year        = "2018",
	volume      = "79",
	number      = "",
	pages       = "107-137",
	doi         = "10.1007/s11075-017-0430-x",
	eprinttype  = "",
	eprint      = "",
	eprintclass = "",
	note        = ""
}

@article{DrMaMaWo2012,
	author      = "Drineas, Petros and Magdon-Ismail, Malik and Mahoney, Michael W. and Woodruff, David P.",
	title       = "{Fast Approximation of Matrix Coherence and Statistical Leverage}",
	journal     = "The Journal of Machine Learning Research",
	year        = "2012",
	volume      = "13",
	number      = "1",
	pages       = "3475-3506",
	doi         = "10.5555/2503308.2503352",
	eprinttype  = "arXiv",
	eprint      = "1109.3843",
	eprintclass = "cs.DS",
	note        = ""
}

@article{NecoaraTakac2021,
	author      = "Necoara, Ion and Tak\'{a}\v{c}, Martin",
	title       = "{Randomized sketch descent methods for non-separable linearly constrained optimization}",
	journal     = "IMA Journal of Numerical Analysis",
	year        = "2021",
	volume      = "41",
	number      = "2",
	pages       = "1056-1092",
	doi         = "10.1093/imanum/draa018",
	eprinttype  = "arXiv",
	eprint      = "1808.02530",
	eprintclass = "math.OC",
	note        = ""
}

@article{SaumardWellner2014,
	author      = "Saumard, Adrien and Wellner, Jon A.",
	title       = "{Log-concavity and strong log-concavity: A review}",
	journal     = "Statistics Surveys",
	year        = "2014",
	volume      = "8",
	number      = "",
	pages       = "45-114",
	doi         = "10.1214/14-SS107",
	eprinttype  = "arXiv",
	eprint      = "1404.5886",
	eprintclass = "math.ST",
	note        = ""
}

@article{Wu2022,
	author      = "Wu, Wen-Ting",
	title       = "{On two-subspace randomized extended Kaczmarz method for solving large linear least-squares problems}",
	journal     = "Numerical Algorithms",
	year        = "2022",
	volume      = "89",
	number      = "",
	pages       = "1-31",
	doi         = "10.1007/s11075-021-01104-x",
	eprinttype  = "",
	eprint      = "",
	eprintclass = "",
	note        = ""
}

@article{CarberyWright2001,
	author      = "Carbery, Anthony and Wright, James",
	title       = "{Distributional and $L^q$ norm inequalities for polynomials over convex bodies in $\mathbb{R}^n$}",
	journal     = "Mathematical Research Letters",
	year        = "2001",
	volume      = "8",
	number      = "",
	pages       = "233-248",
	doi         = "10.4310/MRL.2001.v8.n3.a1",
	eprinttype  = "",
	eprint      = "",
	eprintclass = "",
	note        = ""
}

@article{SchopferEtAl2022,
	author      = "Sch{\"{o}}pfer, Frank and Lorenz, Dirk A. and Tondji, Lionel and Winkler, Maximilian",
	title       = "{Extended randomized Kaczmarz method for sparse least squares and impulsive noise problems}",
	journal     = "Linear Algebra and its Applications",
	year        = "2022",
	volume      = "652",
	number      = "",
	pages       = "132-154",
	doi         = "10.1016/j.laa.2022.07.003",
	eprinttype  = "arXiv",
	eprint      = "2201.08620",
	eprintclass = "math.NA",
	note        = ""
}

@article{LorenzEtAl2018,
	author      = "Lorenz, Dirk A. and Rose, Sean and Sch{\"{o}}pfer, Frank",
	title       = "{The randomized Kaczmarz method with mismatched adjoint}",
	journal     = "BIT Numerical Mathematics",
	year        = "2018",
	volume      = "58",
	number      = "",
	pages       = "1079-1098",
	doi         = "10.1007/s10543-018-0717-x",
	eprinttype  = "arXiv",
	eprint      = "1803.02848",
	eprintclass = "math.NA",
	note        = ""
}

@article{MeierNakatsukasa2024,
	author      = "Meier, Maike and Nakatsukasa, Yuji",
	title       = "{Fast Randomized Numerical Rank Estimation for Numerically Low-Rank Matrices}",
	journal     = "Linear Algebra and its Applications",
	year        = "2024",
	volume      = "686",
	number      = "",
	pages       = "1-32",
	doi         = "10.1016/j.laa.2024.01.001",
	eprinttype  = "arXiv",
	eprint      = "2105.07388",
	eprintclass = "math.NA",
	note        = ""
}

@inproceedings{AgWaLu2014,
	author      = "Agaskar, A. and Wang, C. and Lu, Y. M.",
	title       = "{Randomized Kaczmarz algorithms: Exact MSE analysis and optimal sampling probabilities}",
	booktitle   = "IEEE Global Conference on Signal and Information Processing (GlobalSIP)",
        series      = "",
	year        = "2014",
	volume      = "",
	number      = "",
	pages       = "389-393",
	doi         = "10.1109/GlobalSIP.2014.7032145",
	eprinttype  = "",
	eprint      = "",
	eprintclass = "",
	note        = ""
}

@article{StKuPoBo2012,
	author      = "Studer, Christoph and Kuppinger, Patrick and Pope, Graeme and Bolcskei, Helmut",
	title       = "{Recovery of Sparsely Corrupted Signals}",
	journal     = "IEEE Transactions on Information Theory",
	year        = "2012",
	volume      = "58",
	number      = "5",
	pages       = "3115-3130",
	doi         = "10.1109/TIT.2011.2179701",
	eprinttype  = "arXiv",
	eprint      = "1102.1621",
	eprintclass = "cs.IT",
	note        = ""
}
\addcontentsline{toc}{section}{References}

\end{document}